\newtheorem{theorem}{Theorem}
\newtheorem{lemma}{Lemma}
\newtheorem{definition}{Definition}
\newtheorem{prop}{Proposition}
\newtheorem*{prob}{Problem}
\newtheorem{remark}{Remark}
\newtheorem{corollary}{Corollary}
\begin{document}

\title[Analysis of a class of RDE's]{Analysis of a class of Recursive Distributional Equations Including the Resistance of the Series-parallel graph}
\author[P.S.\ Morfe]{Peter S.\ Morfe}
\address{%
  Department of Mathematics,
  Pennsylvania State University,
  219B McAllister Building,
 University Park, State College, PA 16802
}
\email{pmorfe@psu.edu}

\maketitle

\begin{abstract}  This paper analyzes a class of recursive distributional equations (RDE's) proposed by Gurel-Gurevich \cite{gurel-gurevich} and involving a bias parameter $p$, which includes the logarithm of the resistance of the series-parallel graph.  A discrete-time evolution equation resembling a quasilinear Fisher-KPP equation is derived to describe the CDF's of solutions.  When the bias parameter $p = \frac{1}{2}$, this equation is shown to have a PDE scaling limit, from which distributional limit theorems for the RDE are derived.  Applied to the series-parallel graph, the results imply that $N^{-1/3} \log R^{(N)}$ has a nondegenerate limit when $p = \frac{1}{2}$, as conjectured by Addario-Berry, Cairns, Devroye, Kerriou, and Mitchell \cite{addario-berry_cairns_devroye_kerriou_mitchell}.   \end{abstract}\

%%%%%%%%%%%%%%%%%%%%%%%%%%%%%%%%%%%%%%%%%%%%%%%%%%%%%%%%%%%%%%%%%%%%%%%%%%%%%%%%%%%%%%%%%%%%%%%%%%%%%%%%%%%%

%%%%% INTRODUCTION

%%%%%%%%%%%%%%%%%%%%%%%%%%%%%%%%%%%%%%%%%%%%%%%%%%%%%%%%%%%%%%%%%%%%%%%%%%%%%%%%%%%%%%%%%%%%%%%%%%%%%%%%%%%%

\section{Introduction}  

\subsection{Resistance of the Series-Parallel Graph} This paper seeks to address an old problem, namely, the asymptotic analysis of the resistance of the series-parallel graph.  This is the random hierarchical lattice introduced by Hambly and Jordan \cite{hambly_jordan} and constructed as follows: Start with a graph $G^{(0)}$ consisting of one edge connecting two nodes, $A$ and $B$.  At step $n$ of the construction, obtain $G^{(n)}$ from $G^{(n-1)}$ by replacing every edge by two edges, either in series or in parallel, the result being decided by independent coin flips.   Interpreting $G^{(n)}$ as a resistor network where each edge has resistance one, one can study the effective resistance $R^{(n)}$ between $A$ and $B$.

The asymptotic analysis of $R^{(n)}$ as $n \to +\infty$ was initiated in \cite{hambly_jordan}.  Since $G^{(n)}$ can be realized by combining  two independent copies $G^{(n-1)}_{1}$ and $G^{(n-1)}_{2}$ of $G^{(n-1)}$, say, in series with probability $p$ and in parallel with probability $1 - p$, the law of the resistance $R^{(n)}$ is a solution of the recursive distributional equation (RDE)
	\begin{align*}
		R^{(n)} \overset{d}{=} \left\{ \begin{array}{r l}
								R^{(n-1)}_{1} + R^{(n-1)}_{2}, & \text{with probability} \, \, p, \\
								\left( \frac{1}{R_{1}^{(n-1)}} + \frac{1}{R_{2}^{(n-1)}} \right)^{-1}, & \text{with probability} \, \, 1 - p.
							\end{array} \right.
	\end{align*}
Using this fact, a number of results were proved in \cite{hambly_jordan}: When $p > 1/2$, the logarithm $\log R^{(n)}$ grows linearly in $n$.  More precisely, there is a constant $\lambda ( p ) > 0$ such that
	\begin{align} \label{E: linear growth} 
		\mathbb{P} \left\{ (2 p - 1 ) \log 2 \leq \frac{1}{N} \log R^{(N)} \leq \lambda(p) \right\} \to 1 \quad \text{as} \, \, N \to +\infty.
	\end{align}
Due to symmetry, this implies that $\log R^{(n)} \to -\infty$ linearly when $p < 1/2$.  

At the critical point $p = 1/2$, Hambly and Jordan instead proved that $\log R^{(n)}$ exhibits sublinear growth.  The main results of this paper (see Theorem \ref{T: main subdiffusive} and Corollary \ref{C: distributional limit} below) show that, in fact, $\log R^{(N)}$ grows like $N^{1/3}$.  Specifially,
	\begin{align*}
		\frac{1}{ (72 \zeta(3) )^{ \frac{1}{3} } N^{\frac{1}{3}} } \log R^{(N)} + \frac{1}{2} \overset{d}{\to} \text{Beta}(2,2) \quad \text{as} \, \, N \to +\infty,
	\end{align*}
where $\zeta$ is the Riemann zeta function.  This confirms a conjecture of Addario-Berry, Cairns, Devroye, Kerriou, and Mitchell \cite{addario-berry_cairns_devroye_kerriou_mitchell}.

The starting point for this work is an observation made by Gurel-Gurevich during a presentation at the open problem session at the Workshop for Disordered Media at the R\'{e}nyi Institute in Budapest in January 2025 \cite{gurel-gurevich}.  He noted that the transformation $X^{(n)} = \log R^{(n)}$ leads to an RDE of the form
	\begin{align}
		X^{(n)} =  \Phi(X^{(n-1)}_{1},X^{(n-1)}_{2}),  \label{E: gurel-gurevich RDE}
	\end{align}
where $\Phi$ is the random function defined by
	\begin{align}
		\Phi(x,y) = \left\{ \begin{array}{r l}
					\max\{ x,y \} + f ( |y - x|), & \text{with probability} \, \, p, \\
					\min\{ x,y \} - f ( |y - x|), & \text{with probability} \, \, 1 - p,
				\end{array} \right. \label{E: Gurel-Gurevich force}
	\end{align}
and $f(u) = \log ( 1 + e^{-u} )$.  This led him to pose the following problem:

	\begin{prob} Consider the RDE \eqref{E: gurel-gurevich RDE} with forcing $\Phi$ given by \eqref{E: Gurel-Gurevich force} under general assumptions on the nonincreasing function $f : [0,+\infty) \to [0,+\infty)$.  What can be said about the long-time behavior of the law of solutions? \end{prob}
	
This paper endeavors to address this problem in some generality when $p = 1/2$, focusing on the distributional limit of $n^{-1/3} X^{(n)}$.  The general framework developed here applies not only to the resistance of the critical series-parallel graph, but also to some other RDE's considered recently in the literature.  For instance, as is shown in Section \ref{S: examples} below, the class of RDE's determined by \eqref{E: gurel-gurevich RDE} and \eqref{E: Gurel-Gurevich force} includes the symmetric hipster random walk from \cite{addario-berry_cairns_devroye_kerriou_mitchell} and two-player symmetric cooperative motion, one of the RDE's analyzed very recently by Addario-Berry, Beckman, and Lin in \cite{addario-berry_beckman_lin-symmetric}.  

While the analysis of the asymptotic behavior in the case $p \neq 1/2$ is not touched on here, the framework developed in this paper suggests that it may be amenable to techniques developed in the study of reaction-diffusion equations.  This is discussed further in Section \ref{S: KPP} below.

A few days after this paper first appeared as a preprint, a work of Chen, Duquesne, and Shi \cite{chen_duquesne_shi} appeared, which considers the same class of RDE's and independently obtains the limit theorem for $n^{-1/3} X^{(n)}$.

\subsection{General Setting} Consider the following general setting, which generalizes the forcing $\Phi$ in  \eqref{E: gurel-gurevich RDE}.    Given a Bernoulli random variable $\Theta$ and random nonincreasing functions $f_{+}, f_{-} : \mathbb{R} \to [0,+\infty)$, define $\Phi$ by
	\begin{align} \label{E: main forcing}
		\Phi(x,y) = \left\{ \begin{array}{r l}
						\max\{x,y\} + f_{+}(|x - y|), \quad \text{if} \, \, \Theta = 1, \\
						\min\{x,y\} - f_{-}(|x-y|), \quad \text{if} \, \, \Theta = 0.
					\end{array} \right.
	\end{align}
Without loss of generality, the pair $(f_{+},f_{-})$ will be assumed to be independent of $\Theta$ henceforth.  

The object of interest in this paper is the RDE determining the law of random variables $(X^{(n)})_{n \in \mathbb{N}}$ through the recursion
	\begin{align} \label{E: main RDE}
		X^{(n)} \overset{d}{=} \Phi( X^{(n-1)}_{1},X^{(n-1)}_{2}),
	\end{align}
where $X^{(n-1)}_{1},X^{(n-1)}_{2}$ are i.i.d. copies of $X^{(n-1)}$, which are independent of $\Phi$.  Throughout the paper, equality is only required to hold in law; questions of couplings or constructions of $\{X^{(n)}\}$ will not be treated.

The form \eqref{E: main forcing} of $\Phi$ is not just a repackaging of \eqref{E: Gurel-Gurevich force}.  In the asymmetric setting where $f_{+} \neq f_{-}$, this class of RDE's includes the one associated with Pemantle's min-plus binary tree, the asymptotics of which were analyzed by Auffinger and Cable \cite{auffinger_cable}, as well as certain versions of the asymmetric hipster random walk from \cite{addario-berry_cairns_devroye_kerriou_mitchell} and two-player totally asymmetric, $q$-lazy cooperative motion, one of the RDE's analyzed by Addario-Berry, Beckman, and Lin in \cite{addario-berry_beckman_lin-asymmetric}.  In all of these examples, as well as the symmetric models from \cite{addario-berry_cairns_devroye_kerriou_mitchell,addario-berry_beckman_lin-symmetric} already mentioned above, the asymptotic analysis carried out here generalizes the limit theorems previously obtained within a unified setting.

Henceforth define the bias parameter $p$ by
	\begin{equation*}
		p= \mathbb{P} \{ \Theta = 1\}.  
	\end{equation*}

The joint law of the functions $(f_{+},f_{-})$ will be denoted by $\mathbf{P}$ in what follows.  $\mathbf{P}$ is assumed to be a Borel probability measure on the space $BC([0,+\infty)) \times BC([0,+\infty))$ of pairs of bounded continuous functions in $[0,+\infty)$ with the uniform norm topology.  In most of the examples of interest here, however, $f_{+}$ and $f_{-}$ are deterministic; see the discussion in Section \ref{S: examples}.

\subsection{Evolution of the CDF} The main results of this paper are obtained by treating the evolution of the law of $X^{(n)}$ as if it were governed by a parabolic partial differential equation (PDE).  Following \cite{addario-berry_beckman_lin-asymmetric,addario-berry_beckman_lin-symmetric}, this is done at the level of cumulative distribution functions (CDF's) rather than probability measures.  

In particular, in what follows, define the operator $T$ by taking a CDF $F$, letting $X_{1},X_{2}$ be i.i.d.\ random variables distributed according to $F$, and defining $T F$ to be the CDF of the random variable $Y$ determined by
	\begin{align} \label{E: T definition}
		Y = \Phi(X_{1},X_{2}),
	\end{align}
where $\Phi$ is sampled independently of $(X_{1},X_{2})$. With this notation, it follows that the random variables $\{ X^{(n)} \}$ solve the RDE \eqref{E: main RDE}  if and only if the corresponding CDF's $\{ F_{n} \}$ solve the recursive equation
	\begin{equation*}
		F_{n} = TF_{n-1}.
	\end{equation*}

In the first step of this work, this recursion is reformulated as a discrete-time evolution equation resembling a parabolic PDE.  Namely, there is a nonlinear operator $\mathscr{L}$ defined on the space of CDF's such that
		\begin{align} \label{E: KPP-like equation}
			F_{n} - F_{n-1} = \mathscr{L}F_{n-1} + (1 - 2p) F_{n-1} ( 1 - F_{n-1} ).
		\end{align}
	As will be argued below, $\mathscr{L}$ has the character of an advection-diffusion operator, albeit a nonlinear one, while the ``reaction term" $F (1 - F)$ is familiar from the Fisher-KPP equation.  The equation thus resembles a reaction-diffusion equation.
	
A key property of the equation \eqref{E: KPP-like equation} is its monotonicity: If $F \leq G$ pointwise, then $TF \leq TG$ also holds.  This means that \eqref{E: KPP-like equation} is amenable to techniques developed in the theory of parabolic PDE's.  The monotonicity of $T$ also plays a fundamental role in the aforementioned work of Chen, Duquesne, and Shi \cite{chen_duquesne_shi}.  Their proof involves controlling CDF's through the delicate construction of explicit sub- and supersolutions of \eqref{E: KPP-like equation}.  By contrast, the proof here is more conceptual: It starts by establishing that the evolution equation \eqref{E: KPP-like equation} ``looks" like a PDE under a certain rescaling of the variables $(x,n)$, then invokes abstract convergence results for monotone semigroups and parabolic PDE's.  In addition to the clear intuitive interpretation, the method has the advantage that it applies very generally, including to cases in which explicit formulae are not available.

\subsection{Scaling Limits at the Critical Point $p = 1/2$} As far as convergence in distribution is concerned, deriving a limit for $N^{-1/\alpha} X^{(N)}$ for some exponent $\alpha$ is equivalent to proving a scaling limit for the equation \eqref{E: KPP-like equation}.  In the main result of this paper, which treats the critical point $p = 1/2$, this is done by adapting the classical approach to numerical approximations of parabolic PDE's developed by Barles and Souganidis \cite{barles_souganidis}.

To have a sense of why it is reasonable to expect a PDE scaling limit, consider what happens in \eqref{E: main forcing} when $f_{+} = f_{-} = f_{\mathbb{Z}}$, where $f_{\mathbb{Z}}(u) = (1 - u)_{+}$.  In this case, if the initial datum $X^{(0)}$ is integer-valued, then that remains true of $X^{(n)}$ for all $n$, and thus the CDF $F_{n}$ can be regarded as a function on $\mathbb{Z}$.  In fact, these $\mathbb{Z}$-valued solutions of the RDE coincide with two-player symmetric cooperative motion, one of the RDE's introduced in \cite{addario-berry_beckman_lin-symmetric}.  As shown in that work, the evolution equation governing the CDF now takes a very concrete form, namely,
	\begin{align*}
		F_{n} - F_{n-1} = | \nabla_{\mathbb{Z}} F_{n-1} | \Delta_{\mathbb{Z}} F_{n-1} \quad \text{in} \, \, \mathbb{Z},
	\end{align*}
where $\nabla_{\mathbb{Z}}$ and $\Delta_{\mathbb{Z}}$ are certain discrete derivative operators.  This observation was then used in \cite{addario-berry_beckman_lin-symmetric} in conjunction with the method of \cite{barles_souganidis} and a regularization effect of the equation to show that, after a suitable rescaling, the CDF converges to the solution of the initial-value problem
	\begin{align*}
		\partial_{t} F = | \partial_{x} F | \partial_{x}^{2} F, \quad F(x,0) = \mathbf{1}_{[0,\infty)}(x).
	\end{align*}
After differentiation, this PDE becomes the porous medium equation satisfied by the PDF $\rho = \partial_{x} F$ with Dirac initial data, the solution of which is known to be a self-similarly growing parabolic cap (the so-called Barenblatt solution).  In probabilistic terms, this implies the convergence of $N^{-1/3} X^{(N)}$ to a shifted and dilated Beta(2,2) random variable.  

In this work, this general strategy --- derivation of a discrete-time evolution equation and application of a convergence result in the spirit of \cite{barles_souganidis} --- is shown to still apply in the setting of the RDE's defined above, although its execution is complicated somewhat by the fact that the operator $\mathscr{L}$ is generally nonlocal.

\subsubsection{Main Results} \label{S: main results} The main result holds under some assumptions on the law $\mathbf{P}$ of $(f_{+},f_{-})$.  First, define the class of functions $\mathcal{S}$ so that $f \in \mathcal{S}$ if and only if
		\begin{gather}
			f : \mathbb{R} \to \mathbb{R} \quad \text{is continuous and nonincreasing, and} \label{E: nonincreasing}\\
			u \mapsto u + f(u) \, \, \text{is a nondecreasing function in} \, \, [0,+\infty). \label{E: map} 
		\end{gather}
Note that the function $f(u) = \log ( 1 + e^{-u} )$ relevant to the series-parallel graph belongs to $\mathcal{S}$ since $-1 \leq f' \leq 0$.

Throughout the paper, $f_{+}$ and $f_{-}$ are assumed to lie in $\mathcal{S}$.  For $\rho \in \{+,-\}$, let $g_{\rho} : [0,+\infty) \to [0,+\infty)$ be the function
	\begin{align} \label{E: right inverse}
		g_{\rho}(s) = \sup \left\{ u \geq 0 \, \mid \, u + f_{\rho}(u) \leq s \right\} \quad \text{if} \, \, s \geq f_{\rho}(0), \quad g_{\rho}(s) = 0, \quad \text{otherwise.}
	\end{align}
Notice that the restriction of $g_{\rho}$ to the set $[f_{\rho}(0),+\infty)$ is the right-continuous right-inverse of the function $u \mapsto u + f_{\rho}(u)$.

By definition, if $f_{\pm}(+\infty) = 0$, then $g_{\pm}(s) - s \to 0$ as $s \to +\infty$.  The next assumptions ask that $g_{\pm}(s) - s$ vanishes in a quantitative manner: First, under the assumption that
	\begin{align} \label{E: decay one}
		\int_{0}^{+\infty} \mathbf{E}[ |g_{+}(s) - s| + |g_{-}(s) - s| ] \, ds < +\infty,
	\end{align}
define the constant $\sigma$ by 
	\begin{align*}
		\sigma = - \int_{0}^{+\infty} \mathbf{E}[ |g_{+}(s) - s| ] \, ds +  \int_{0}^{+\infty} \mathbf{E} [ |g_{-}(s) - s| ] \, ds.
	\end{align*}
If in addition the following stronger assumption holds
	\begin{align} \label{E: decay two}
		\int_{0}^{+\infty} (1 + s ) \mathbf{E}[ |g_{+}(s) - s| + |g_{-}(s) - s| ] \, ds < +\infty,
	\end{align}
let $a \geq 0$ be the constant defined by
	\begin{align*}
		a = \frac{1}{2} \int_{0}^{+\infty} \mathbf{E} [ (3 s - g_{+}(s) ) ( s - g_{+}(s) ) ] \, ds + \frac{1}{2} \int_{0}^{+\infty} \mathbf{E}[ (3 s - g_{-}(s)) ( s - g_{-}(s) ) ] \, ds.
	\end{align*}
The nonnegativity of $a$ follows from the fact that $g_{\pm}(s) \leq s$ for any $s \geq 0$.

The next two theorems characterize the scaling limit of the CDF at the critical point $p = 1/2$ in terms of a family of parabolic PDE's parametrized by $\sigma$ and $a$.  The relevant background from PDE theory, particularly the theory of viscosity solutions, is reviewed in Appendix \ref{A: viscosity solutions} below.  Motivated by recent work of Chen, Derrida, Duquesne, and Shi \cite{chen_derrida_duquesne_shi}, the theorem allows for the possibility that $p = p^{(N)}$ converges to $1/2$ as $N \to +\infty$.

	\begin{theorem} \label{T: main diffusive} Assume that $\mathbf{P}\{ f_{+},f_{-} \in \mathcal{S} \} = 1$,  \eqref{E: decay one} holds, and $\sigma \neq 0$.  Fix a $\theta \in \mathbb{R}$ and let $p^{(N)} = 1/2 + \theta N^{-1}$.  For each $N \in \mathbb{N}$, let $\{X^{(N,n)}\}$ be a solution of the RDE 
	\begin{equation*}
		X^{(N,n)} \overset{d}{=} \Phi^{(N)} ( X^{(N,n-1)}_{1}, X^{(N,n-1)}_{2} ),
	\end{equation*}
where $(X^{(N,n-1)}_{1},X^{(N,n-1)}_{2})$ are i.i.d.\ copies of $X^{(N,n-1)}$ and $\Phi^{(N)}$ is given by \eqref{E: main forcing} with bias parameter $p = p^{(N)}$.

	Assume that there is a function $F_{\text{in}}$ such that, at the initial time $n = 0$, the following limit holds:
		\begin{align*}
			\lim_{N \to +\infty} \mathbb{P} \{ N^{-1/2} X^{(N,0)} \leq x\} = F_{\text{in}}(x) \quad \text{for almost every} \, \, x \in \mathbb{R}.
		\end{align*}
	If $\{F_{N}\}$ is the sequence of rescaled CDF's defined in $\mathbb{R} \times [0,+\infty)$ by
		\begin{align*}
			F_{N}(x,t) = \mathbb{P} \{ N^{-1/2} X^{(N, [ N t ] )} \leq x \},
		\end{align*}
	then $F_{N} \to F$ locally uniformly in $\mathbb{R} \times (0,+\infty)$ as $N \to +\infty$, where $F$ is the unique bounded discontinuous viscosity solution of the initial value problem
				\begin{align} \label{E: HJ IVP}
					\left\{ \begin{array}{r l}
						\partial_{t} F - \sigma | \partial_{x} F |^{2} + 2 \theta F ( 1 - F ) = 0 & \text{in} \, \, \mathbb{R} \times (0,+\infty), \\
						F(x,0) = F_{\text{in}}(x).
					\end{array} \right.
				\end{align}
	\end{theorem}
	
See Section \ref{S: counterexample} for an example in which \eqref{E: decay one} fails to hold and $N^{-1/2} X^{(N)} \overset{d}{\to} +\infty$.
	
If $\theta = \sigma = 0$ above, then the proof of the theorem instead implies that $F_{N}$ converges to the initial datum $F_{\text{in}}$, meaning that $\{X^{(N,n)}\}$ exhibits no nontrivial motion in the diffusive scaling limit.  Under the stronger assumption \eqref{E: decay two}, one finds that instead the appropriate scaling is subdiffusive.

\begin{theorem} \label{T: main subdiffusive} Assume that $\mathbf{P}\{ f_{+},f_{-} \in \mathcal{S} \} = 1$, \eqref{E: decay two} holds, $\sigma = 0$, and $a > 0$.  Fix a $\theta \in \mathbb{R}$ and let $p^{(N)} = 1/2 + \theta N^{-1}$.  For each $N \in \mathbb{N}$, let $\{X^{(N,n)}\}$ be a solution of the RDE 
	\begin{equation*}
		X^{(N,n)} \overset{d}{=} \Phi^{(N)} ( X^{(N,n-1)}_{1}, X^{(N,n-1)}_{2} ),
	\end{equation*}
where $(X^{(N,n-1)}_{1},X^{(N,n-1)}_{2})$ are i.i.d.\ copies of $X^{(N,n-1)}$ and $\Phi^{(N)}$ is given by \eqref{E: main forcing} with bias parameter $p = p^{(N)}$.

Assume that there is a function $F_{\text{in}}$ such that, at the initial time $n = 0$, the following limit holds:
		\begin{align*}
			\lim_{N \to +\infty} \mathbb{P} \{ N^{-1/3} X^{(N,0)} \leq x\} = F_{\text{in}}(x) \quad \text{for almost every} \, \, x \in \mathbb{R}.
		\end{align*}
	If $\{F_{N}\}$ is the sequence of rescaled CDF's defined in $\mathbb{R} \times [0,+\infty)$ by
		\begin{align*}
			F_{N}(x,t) = \mathbb{P} \{ N^{-1/3} X^{(N, [ N t ] )} \leq x \},
		\end{align*}
	then $F_{N} \to F$ locally uniformly in $\mathbb{R} \times (0,+\infty)$ as $N \to +\infty$, where $F$ is the unique bounded discontinuous viscosity solution of the initial-value problem
				\begin{align} \label{E: porous medium IVP}
					\left\{ \begin{array}{r l}
						\partial_{t} F - a | \partial_{x} F | \partial_{x}^{2} F + 2 \theta F ( 1 - F ) = 0 & \text{in} \, \, \mathbb{R} \times (0,+\infty), \\
						F(x,0) = F_{\text{in}}(x).
					\end{array} \right.
				\end{align}
	\end{theorem}

%	\begin{theorem} \label{T: main subdiffusive} Assume that $p = \frac{1}{2}$, $\mathbf{P}\{ f_{+},f_{-} \in \mathcal{S} \} = 1$, \eqref{E: decay two} holds, and $\sigma = 0$.  For each $N \in \mathbb{N}$, let $\{X^{(N,n)}\}$ be a sequence of solutions of the RDE \eqref{E: main RDE}, and assume that there is a function $F_{\text{in}}$ such that, at the initial time $n = 0$, the following limit holds:
%		\begin{align*}
%			\lim_{N \to \infty} \mathbb{P} \{ N^{-1/3} X^{(N,0)} \leq x\} = F_{\text{in}}(x) \quad \text{for each} \, \, x \in \mathbb{R}.
%		\end{align*}
%	If $\{F_{N}\}$ is the sequence of rescaled CDF's defined in $\mathbb{R} \times [0,\infty)$ by
%		\begin{align*}
%			F_{N}(x,t) = \mathbb{P} \{ N^{-1/3} X^{(N, [ N t ] )} \leq x \},
%		\end{align*}
%	then $F_{N} \to F$ locally uniformly in $\mathbb{R} \times (0,\infty)$, where $F$ is the unique bounded discontinuous viscosity solution of the initial-value problem
%				\begin{align} \label{E: porous medium IVP}
%					\left\{ \begin{array}{r l}
%						\partial_{t} F - a | \partial_{x} F | \partial_{x}^{2} F = 0 & \text{in} \, \, \mathbb{R} \times (0,\infty), \\
%						F(x,0) = F_{\text{in}}(x).
%					\end{array} \right.
%				\end{align}
%	\end{theorem}
		
The PDE's derived above become more familiar when passing to PDF's and setting $\theta = 0$: The spatial derivative $\rho = \partial_{x} F$ is a solution of Burger's equation $\partial_{t} \rho = \sigma \partial_{x} ( \rho^{2} )$ in Theorem \ref{T: main diffusive} and a solution of the porous medium equation $\partial_{t} \rho = \frac{1}{2} a \partial_{x}^{2} ( \rho^{2} )$ in Theorem \ref{T: main subdiffusive}.  

As observed in \cite{addario-berry_beckman_lin-asymmetric,addario-berry_beckman_lin-symmetric}, when $\theta = 0$ and $F_{\text{in}} = \mathbf{1}_{[0,\infty)}$, the solutions of the PDE's above are explicitly known and closely related to the $\text{Beta}(2,1)$ and $\text{Beta}(2,2)$ distributions.  This leads to limit theorems for the RDE \eqref{E: main RDE} started from an arbitrary (fixed) initial distribution.

	\begin{corollary} \label{C: distributional limit} Assume that $p = 1/2$, $\mathbf{P}\{ f_{+},f_{-} \in \mathcal{S} \} = 1$, and \eqref{E: decay one} holds, and let $\{X^{(n)}\}$ be a solution of the RDE \eqref{E: main RDE} with an arbitrary initial distribution. 
		\begin{itemize}
			\item[(i)] The sequence $\{ N^{-1/2} X^{(N)}\}$ has the following distributional limit:
				\begin{align*}
					-\frac{ \text{sgn} ( \sigma ) }{ 2 \sqrt{ N } } X^{(N)} \overset{d}{\to} \sqrt{|\sigma|} \text{Beta}(2,1),
				\end{align*}
			where $\sqrt{|\sigma|} \text{Beta}(2,1)$ denotes a $\text{Beta}(2,1)$ random variable dilated by a factor $\sqrt{|\sigma|}$.  In particular, if $\sigma = 0$, then the limit is zero.
			\item[(ii)] Under the stronger assumption \eqref{E: decay two}, if $\sigma = 0$ and $a > 0$, then $\{ N^{-1/3} X^{(N)} \}$ instead has a nontrivial distributional limit:
				\begin{align*}
					\frac{ 1 }{ (36 a)^{\frac{1}{3}} N^{\frac{1}{3}} } X^{(N)} + \frac{1}{2} \overset{d}{\to} \text{Beta}(2,2)
				\end{align*}
			\item[(iii)] Under \eqref{E: decay two}, if $\sigma = a = 0$, then $\mathbf{P} \{ f_{+} = f_{-} = 0\} = 1$ and $\{ X^{(n)} \}$ is the constant sequence $X^{(n)} \overset{d}{=} X^{(0)}$.
		\end{itemize}
	\end{corollary}

It is not hard to show, through manipulations of the definitions above, that $\sigma > 0$ holds, for instance, if 
$f_{-} > f_{+}$ $\mathbf{P}$-almost surely, while $\sigma = 0$ if $f_{+} = f_{-}$.  In the case of the resistance $R^{(n)}$ of the critical series-parallel graph, $\sigma =0$ and $a = 2 \zeta(3)$.  For more on this and further discussion of examples, see Section \ref{S: examples}.  

In addition to Corollary \ref{C: distributional limit}, Theorems \ref{T: main diffusive} and \ref{T: main subdiffusive} have two applications that are worth highlighting now.  In Section \ref{S: resistance} below, Theorem \ref{T: main subdiffusive} is used to characterize the asymptotics of the resistance of the critical series-parallel graph in the general case when the edges have i.i.d.\ resistances, which could equal zero or $+\infty$.  Similarly, Section \ref{S: distance} gives a characterization of the limiting behavior of Pemantle's min-plus binary tree (or first passage percolation on the critical series-parallel graph) when the weights on the leaves are i.i.d.\ with an arbitrary distribution, answering a question posed in \cite{auffinger_cable}.

\subsubsection{Idea of the Proof} The PDE scaling limit of the CDF is obtained using an extension of the approach of \cite{barles_souganidis}, which is broadly applicable to elliptic and parabolic PDE's studied through the lens of Crandall and Lions' theory of viscosity solutions.  There are two key ingredients in that approach, namely, \emph{monotonicity} and \emph{consistency}.  

As mentioned already above, monotonicity refers to the fact that the equation \eqref{E: KPP-like equation} is order-preserving: If $\{F_{n}\}$ and $\{G_{n}\}$ are two solutions, and if $F_{0} \leq G_{0}$ pointwise, then the inequality $F_{n} \leq G_{n}$ remains true for any subsequent $n$.

Consistency, on the other hand, refers to the behavior of the equation under rescaling.  Specifically, Proposition \ref{P: consistency condition} below shows that if $G$ is a smooth CDF, rescaled according to $
G_{\delta}(x) = G(\delta x)$, then
	\begin{align} \label{E: asymptotic expansion intro}
		(\mathscr{L}G_{\delta})( \delta^{-1} x) = \delta^{2} \sigma | \partial_{x} G(x) |^{2} + \delta^{3} a | \partial_{x} G(x) | \partial_{x}^{2} G(x) + \cdots \quad \text{as} \, \, \delta \downarrow 0.
	\end{align}
Rescaling the sequence of CDF's $\{F_{n}\}$ according to $F_{N}(x,t) = F_{[N t]}( N^{1/\alpha} x)$, na\"{i}vely inserting the above expansion into the equation \eqref{E: KPP-like equation}, and setting $p = 1/2 + \theta N^{-1}$, one obtains the formal asymptotic expansion
	\begin{align*}
		\partial_{t} F_{N} = N^{1 - \frac{2}{\alpha} } \sigma | \partial_{x} F_{N}|^{2} + N^{1 - \frac{3}{\alpha} } a | \partial_{x} F_{N}|^{2} - 2 \theta F_{N} ( 1 - F_{N} ) + \cdots \quad \text{as} \, \, N \to + \infty,
	\end{align*}
which motivates the choices $\alpha = 2$ and $\alpha = 3$ used above and the appearance of the PDE's \eqref{E: HJ IVP} and \eqref{E: porous medium IVP}.  The approach of \cite{barles_souganidis} provides a robust framework for making such a formal argument rigorous.

\subsection{Variants of the Fisher-KPP Equation when $p \neq 1/2$} \label{S: KPP} The results of this work suggest interesting new questions away from the critical point $p = 1/2$.  In view of the asymptotic expansion \eqref{E: asymptotic expansion intro} and the PDE's obtained in Theorems \ref{T: main diffusive} and \ref{T: main subdiffusive}, the equation \eqref{E: KPP-like equation} shares some similarities with the reaction-diffusion equation
	\begin{align} \label{E: KPP}
		\partial_{t} G = a | \partial_{x} G | \partial_{x}^{2} G + \sigma | \partial_{x} G |^{2} - 2 \theta G ( 1 - G ).
	\end{align}
In fact, \eqref{E: KPP-like equation} becomes the discrete-in-space-time, finite-difference version of \eqref{E: KPP} for suitable choices of the forcing $(f_{+},f_{-})$; see Section \ref{S: finite difference} for more details.

A number of works in the PDE literature considered the equation \eqref{E: KPP} in the case when $\sigma = 0$ and $a > 0$.  Using an auxiliary ODE, Engui\c{c}a, Gavioli, and Sanchez \cite{enguica_gavioli_sanchez} established the existence of traveling waves propagating at any large-enough speed, as in the classical Fisher-KPP equation.  The work of Audrito and V\'{a}zquez \cite{audrito_vazquez} establishes existence, uniqueness, and positivity properties of traveling waves and also analyzes the asymptotic behavior of solutions, proving that sufficiently rapidly decaying perturbations of the unstable state (say, $0$ if $\theta < 0$) converge to the stable state ($1$ if $\theta < 0$) inside a ballistically growing region, with the rate of growth determined by the minimal wave speed.  It would be interesting to adapt the techniques of \cite{audrito_vazquez} to the study of \eqref{E: KPP-like equation} away from the critical point.

\subsection{Related Literature} There is considerable interest in RDE's in the probability literature.  The reader is referred to the survey article of Aldous and Bandyopadhyay \cite{aldous_bandyopadhyay}, which discusses applications in the study of branching processes, percolation, and mean-field combinatorial optimization problems. 

There has been a recent burst of activity concerning RDE's in which the relevant asymptotic behavior is not convergence to a fixed point as in the examples in \cite{aldous_bandyopadhyay}, but instead a distributional scaling limit  in the same spirit as the central limit theorem.  

Auffinger and Cable \cite{auffinger_cable} analyzed Pemantle's min-plus binary tree (or, equivalently, first passage percolation on the critical series-parallel graph, see \cite[Section 3]{hambly_jordan}), proving that solutions of the associated RDE converge to a $\text{Beta}(2,1)$ distribution after a diffusive rescaling provided the initial distribution is concentrated at one.  This is improved to the case of an arbitrary initial distribution in Corollary \ref{C: critical distance} below, answering one of the open problems posed therein.

The proof in \cite{auffinger_cable} is based on the explicit construction of sub- and supersolutions of \eqref{E: KPP-like equation}, which, by monotonicity, can be used to control solutions, or what is referred to as a barrier argument in the PDE literature.  The same strategy was employed by Chen, Duquesne, and Shi in \cite{chen_duquesne_shi}, who independently derived the $\text{Beta}(2,2)$ limit theorem of Corollary \ref{C: distributional limit} above, albeit under slightly more restrictive assumptions on $(f_{+},f_{-})$.  

Addario-Berry and coauthors recently introduced two classes of RDE's, called hipster random walks and cooperative motion, in \cite{addario-berry_cairns_devroye_kerriou_mitchell, addario-berry_beckman_lin-asymmetric, addario-berry_beckman_lin-symmetric}.  In \cite{addario-berry_cairns_devroye_kerriou_mitchell}, $\text{Beta}(2,1)$ and $\text{Beta}(2,2)$ limit theorems for some hipster random walks were proved by treating the evolution of the PMF as a numerical approximation of the solution of a divergence-form PDE.  As discussed therein, some of the motivation to study these RDE's came from interest in the resistance of the series-parallel graph and Pemantle's min-plus binary tree.  

Cooperative motion was analyzed in \cite{addario-berry_beckman_lin-asymmetric, addario-berry_beckman_lin-symmetric}, this time through the analysis of the CDF and using techniques developed for PDE's in nondivergence form, specifically the theory of viscosity solutions.    In the present paper, the idea to focus attention on the CDF came from these last two works.

The approach used here is similar in spirit to that in \cite{addario-berry_beckman_lin-asymmetric,addario-berry_beckman_lin-symmetric}.  A few differences are worth highlighting.  Many of the RDE's treated in \cite{addario-berry_beckman_lin-asymmetric,addario-berry_beckman_lin-symmetric} are not monotone, hence the method of \cite{barles_souganidis} does not apply out of the box.  Instead, those works showed that, after a finite time, the law of $X^{(n)}$ is pulled into a region in which the RDE \emph{is} monotone; at the level of the CDF, the key observation is that the CDF obtains a uniform Lipschitz bound in a (universal) finite time, which mimics the properties of the PDE's that emerge in the scaling limit.  After this finite waiting period, monotonicity can be used as in \cite{barles_souganidis} (or, in the case of \cite{addario-berry_beckman_lin-asymmetric}, following the earlier work \cite{crandall_lions}).  

By contrast, here the RDE's of interest are all monotone.  There is a technical issue, namely, that the result of \cite{barles_souganidis} does not apply directly as the evolution \eqref{E: KPP-like equation} only makes sense as an equation posed in the space of CDF's, whereas \cite{barles_souganidis} works with evolution equations posed in the (vector) space of bounded functions.  In \cite{addario-berry_beckman_lin-symmetric}, this was arguably less of an issue as the equations all involve finite differences (although the relevant details were not discussed there).  Here $\mathscr{L}$ behaves like a nonlocal operator with a kernel with infinite range so at first blush one would like to know that \cite{barles_souganidis} can be carried out with globally nondecreasing test functions, which is not obvious.  It is, in fact, possible to adapt the method to monotone semigroups in the space of CDF's.  In the hope that it may be useful in other contexts, the result is formulated in an abstract way in Appendix \ref{A: monotone schemes} below.

%Further difficulties arise here and in \cite{addario-berry_beckman_lin-asymmetric,addario-berry_beckman_lin-symmetric} due to the fact that, in the main applications of interest, the initial datum of the relevant PDE is discontinuous.  Appendix \ref{A: } shows how to leverage some classical results on Hamilton-Jacobi and porous media equations to prove that all the PDE's appearing in this work have unique discontinuous viscosity solutions, and the solutions become instantaneously continuous, which may be of independent interest.  

When $p \neq \frac{1}{2}$, the evolution equation obtained here and the analogy with reaction-diffusion equations suggests a strategy for tackling the asymptotics of the RDE.  This is of interest both in the analysis of the resistance and the distance on the series-parallel graph.  The latter was recently considered by Chen, Derrida, Duquesne, and Shi in \cite{chen_derrida_duquesne_shi}.   They proved that when $p > \frac{1}{2}$, the logarithm of the expected value of the distance grows at a linear rate with slope $\alpha(p)$, and, again using barrier arguments, they proved that $\alpha( 1/2 + \epsilon ) \approx \sqrt{ \zeta(2) \epsilon }$ as $\epsilon \to 0$.  Their proof is motivated through the heuristic derivation of a PDE that can be shown to be closely related to \eqref{E: HJ IVP}.

As mentioned above, convergence of the rescaled CDF's to the solution of a PDE is proved here by adapting the classical approach of Barles and Souganidis from \cite{barles_souganidis}.  At a high level, as soon as the monotonicity and scaling behavior (``consistency") of $\mathscr{L}$ is understood, convergence follows.  This is not the only setting where some extra tailoring of the method is needed to furnish a proof.  For instance, in \cite{barles_souganidis-interface}, the same authors adapted their approach to the setting of geometric flows and interface motions.  

\subsection{Organization of the Paper} Section \ref{S: prelim} discusses some preliminaries used throughout the paper.  The key monotonicity result is proved there.  The equation describing the evolution of CDF's is derived in Section \ref{S: equation}. Section \ref{S: examples} discusses examples that fit into the class described by \eqref{E: main RDE} and \eqref{E: main forcing}.  The main results are proved in Section \ref{S: scaling limit} conditional on an extension of \cite{barles_souganidis} and some uniqueness results for discontinuous viscosity solutions, which are presented in the appendix.

There are three appendices.  Necessary terminology and results from the theory of viscosity solutions are presented in Appendix \ref{A: viscosity solutions}.  A variant of the convergence result of \cite{barles_souganidis}, tailored to monotone semigroups in the space of CDF's, is stated and proved in Appendix \ref{A: monotone schemes}.  Finally, Appendix \ref{A: polylog} includes computations of the coefficients $\sigma$ and $a$ arising respectively in Pemantle's min-plus binary tree and the resistance of the critical series-parallel graph  

\section*{Acknowledgements} 

This work was made possible in part by the hospitality and support of the R\'{e}nyi Institute in Budapest, Hungary.  The initial inspiration came from the talk \cite{gurel-gurevich} delivered during the Workshop on Disordered Media hosted by the Erd\H{o}s Center during the Simons Semester on Probability and Statistical Physics in the spring of 2025.

%%%%%%%%%%%%%%%%%%%%%%%%%%%%%%%%%%%%%%%%%%%%%%%%%%%%%%%%%%%%%%%%%%%%%%%%%%%%%%%%%%%%%%%%%%%%%%%%%%%%%%%%%%%%

%%%%% PRELIMINARIES ( v5 )

%%%%%%%%%%%%%%%%%%%%%%%%%%%%%%%%%%%%%%%%%%%%%%%%%%%%%%%%%%%%%%%%%%%%%%%%%%%%%%%%%%%%%%%%%%%%%%%%%%%%%%%%%%%%

\section{Preliminaries} \label{S: prelim}

\subsection{Extended Real-Valued Random Variables} Note that the definition \eqref{E: main RDE} still makes sense if the input random variables are permitted to take the values $+\infty$ and $-\infty$.  It will be convenient to extend the definition to include such extended real-valued random variables.  To that end, for an extended real-valued random variable $X$, define the CDF $F : \mathbb{R} \to [0,1]$ via the rule
	\begin{align*}
		\mathbb{P}\{X = -\infty\} = \lim_{y \to -\infty} F(y), \quad \mathbb{P}\{X \leq x\} = F(x), \quad \mathbb{P}\{X = +\infty\} = 1 - \lim_{y \to +\infty} F(y).
	\end{align*}
Notice that the space $CDF(\overline{\mathbb{R}})$ of all such CDF's is simply
	\begin{align*}
		CDF(\overline{\mathbb{R}}) = \{ F : \mathbb{R} \to [0,1] \, \, \text{nondecreasing, right-continuous}  \}.
	\end{align*}
Using standard extended real number arithmetic, from now on, the formula \eqref{E: T definition} defining $T$ will be understood to hold in the domain $CDF(\overline{\mathbb{R}})$. 

\subsection{Continuity under Vague Convergence.} It is useful to note that $T$ is continuous with respect to vague convergence.  Recall that $\{F_{n}\}$ converges vaguely to $F$ if $F_{n}(x) \to F(x)$ at each point of continuity $x \in \mathbb{R}$ of $F$.

	\begin{prop} \label{P: vague convergence} If $\{F_{n}\}$ is a sequence in $CDF(\overline{\mathbb{R}})$ converging vaguely to some $F$, then $\{ TF_{n} \}$ converges vaguely to $TF$. \end{prop}
	
		\begin{proof} It suffices to recall that this mode of convergence for $\{ F_{n} \}$ is equivalent to the existence of a probability space $\mathbb{P}$ supporting a sequence $\{ (X_{1,n},X_{2,n}) \}$ and a vector $(X_{1},X_{2})$ such that, for each $n$, $X_{1,n}$ and $X_{2,n}$ are i.i.d.\ with CDF $F_{n}$; $(X_{1},X_{2})$ are i.i.d.\ with CDF $F$; and $(X_{1,n},X_{2,n}) \to (X_{1},X_{2})$ $\mathbb{P}$-almost surely.  (Note that the possibility that $X_{1}$ or $X_{2}$ are infinite poses no difficulties here.)  Letting $Y_{n} = \Phi ( X_{1,n}, X_{2,n})$ and $Y = \Phi ( X_{1}, X_{2} )$, where $\Phi$ is sampled independently of $\{ (X_{1,n},X_{2,n}) \}$ and $(X_{1},X_{2})$, the boundedness and continuity of $f_{+}$ and $f_{-}$ are enough to deduce that $Y_{n} \to Y$ almost surely as $n \to \infty$.  By definition of $T$, this implies $TF_{n} \to TF$.         \end{proof}

\subsection{Symmetry} \label{S: symmetry} In order to reduce redundancy in the proofs that follow, it is worth observing that if $Y = \Phi(X_{1},X_{2})$, where $\Phi$ is given by \eqref{E: main forcing}, then 
	\begin{align*}
		- Y = \Psi( - X_{1}, - X_{2} ),
	\end{align*}
provided $\Psi$ is defined by
	\begin{align*}
		\Psi(x,y) = \left\{ \begin{array}{r l}
						\max\{x,y\} + f_{-}(|x-y|), & \text{if} \, \, 1 - \Theta = 1, \\
						\min\{x,y\} - f_{+}(|x-y|), & \text{if} \, \, 1 - \Theta = 0.
					\end{array} \right.
	\end{align*}
Thus, in proofs where the events $\{\Theta = 1\}$ and $\{\Theta = 0\}$ would in principle need to be treated independently, by symmetry, there is no loss of generality in assuming $\Theta = 1$.

The symmetry property above has a counterpart at the level of the CDF.  If $\{F_{n}\}$ is the sequence of CDF's associated with a solution $\{X^{(n)}\}$ of the RDE $X^{(n)} = \Phi( X^{(n-1)}_{1}, X^{(n-1)}_{2} )$, then the functions $\{G_{n}\}$ defined by
	\begin{align*}
		G_{n}(x) = 1 - \lim_{\delta \downarrow 0} F_{n}(-x - \delta)
	\end{align*}
are the CDF's associated with the solution $\{ Y^{(n)} \}$ of the RDE $Y^{(n)} = \Psi( Y^{(n-1)}_{1}, Y^{(n-1)}_{2} )$ obtained by setting $Y^{(n)} = - X^{(n)}$.

\subsection{Monotonicity} In this section, the monotonicity of $T$ is proved.  Here \emph{monotone} means that 
	\begin{align*}
		\text{if} \, \, F \leq G \, \, \text{pointwise in} \, \, \mathbb{R}, \quad \text{then} \, \, TF \leq TG \, \, \text{pointwise in} \, \, \mathbb{R}.
	\end{align*}To this end, it is convenient to exploit the quadratic structure inherent in the definition.  Specifically, it will be useful to consider a certain natural operator 
		\begin{equation*}
			S : CDF(\overline{\mathbb{R}}) \times CDF(\overline{\mathbb{R}}) \to CDF(\overline{\mathbb{R}})
		\end{equation*} 
	with the property that $S(F,F) = TF$ for any CDF $F$.

$S$ is defined analogously to $T$: Given two CDF's $F,G \in CDF(\overline{\mathbb{R}})$, let $X_{F}$ and $X_{G}$ be independent extended real-valued random variables distributed according to $F$ and $G$, respectively, and sample the data $(\Theta,f_{+},f_{-})$ independently of $(X_{F},X_{G})$.  Let $S(F,G)$ denote the CDF of the random variable $Y$ defined as follows:
	\begin{align} \label{E: definition of S}
		Y = \Phi(X_{F},X_{G}) = \left\{ \begin{array}{r l} 
			\max\{X_{F},X_{G}\} + f_{+} ( |X_{F} - X_{G}| ), & \text{if} \, \, \Theta = 1, \\
			\min\{X_{F},X_{G}\} - f_{-} ( |X_{F} - X_{G}| ), & \text{if} \, \, \Theta = 0.
		\end{array} \right.
	\end{align}
Clearly $S(F,F) = TF$ as desired.  Further, $S(F,G) = S(G,F)$ by symmetry.

The next result asserts that $S$ is monotone provided $f_{+},f_{-} \in \mathcal{S}$.  

	\begin{prop} \label{P: monotone} Assume that $\mathbf{P} \{ f_{+},f_{-} \in \mathcal{S} \} = 1$. Given $F_{+},F_{-}, G \in CDF(\overline{\mathbb{R}})$, if $F_{+} \leq F_{-}$ holds pointwise in $\mathbb{R}$, then $S(F_{+},G) \leq S(F_{-},G)$ also holds.  Therefore, $T$ is monotone. \end{prop}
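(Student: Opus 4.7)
The plan is to exploit a coupling argument together with pathwise monotonicity of $\Phi$ in each of its arguments. Given $F_{+} \leq F_{-}$, couple $X_{F_{+}}$ and $X_{F_{-}}$ via the usual quantile construction: take a single uniform random variable $U$ and set $X_{F_{\pm}} = \inf\{ x \, : \, F_{\pm}(x) \geq U\}$. Since $F_{+} \leq F_{-}$ pointwise forces $F_{+}^{-1} \geq F_{-}^{-1}$, this gives $X_{F_{+}} \geq X_{F_{-}}$ almost surely (allowing $\pm\infty$ values). Sample $X_{G} \sim G$ independently, then sample $(\Theta, f_{+}, f_{-})$ independently of everything else, and form $Y_{\pm} = \Phi(X_{F_{\pm}}, X_{G})$. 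If one can show that, in this coupling, $Y_{+} \geq Y_{-}$ almost surely, then $\mathbb{P}\{Y_{+} \leq t\} \leq \mathbb{P}\{Y_{-} \leq t\}$ for every $t$, which is precisely $S(F_{+},G) \leq S(F_{-},G)$.

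The crux is therefore to verify that for each fixed $y \in \overline{\mathbb{R}}$ and each realization of $(\Theta, f_{+}, f_{-})$, the map $x \mapsto \Phi(x,y)$ is nondecreasing on $\mathbb{R}$. By the symmetry reduction in Section \ref{S: symmetry}, it suffices to treat $\Theta = 1$, so $\Phi(x,y) = \max\{x,y\} + f_{+}(|x-y|)$. Split into two cases. On $(-\infty,y]$, $\Phi(x,y) = y + f_{+}(y-x)$, which is nondecreasing in $x$ because $f_{+}$ is nonincreasing (condition \eqref{E: nonincreasing}). On $[y,+\infty)$, $\Phi(x,y) = x + f_{+}(x-y) = y + [(x-y) + f_{+}(x-y)]$, which is nondecreasing in $x$ because $u \mapsto u + f_{+}(u)$ is nondecreasing on $[0,\infty)$ (condition \eqref{E: map}). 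Continuity of $f_{+}$ ensures the two pieces agree at $x=y$, so $\Phi(\cdot,y)$ is nondecreasing on all of $\mathbb{R}$. The two defining properties of $\mathcal{S}$ are thus used in exactly complementary ways, one on each side of $y$; this is the only nontrivial step, and it is the main (though modest) obstacle in the argument.

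Once monotonicity of $S$ in its first argument is established, monotonicity in the second argument follows from the identity $S(F,G) = S(G,F)$, which holds because $\Phi$ is a symmetric function of its two i.i.d. arguments. To deduce monotonicity of $T$, given $F_{1} \leq F_{2}$, insert the intermediate CDF $S(F_{1},F_{2})$:
\begin{equation*}
TF_{1} = S(F_{1},F_{1}) \leq S(F_{1},F_{2}) = S(F_{2},F_{1}) \leq S(F_{2},F_{2}) = TF_{2},
\end{equation*}
where the first inequality uses monotonicity in the second argument and the second uses monotonicity in the first. The extended-real-valued setting requires no extra care beyond observing that the coupling and the pathwise monotonicity of $\Phi$ both extend to $\overline{\mathbb{R}}$ under the extended arithmetic conventions already in force.
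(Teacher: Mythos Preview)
Your proof is correct and follows essentially the same approach as the paper: construct a coupling with $X_{F_{+}} \geq X_{F_{-}}$ almost surely and then verify pathwise monotonicity of $\Phi$ in its first argument using the two defining properties of $\mathcal{S}$. The only cosmetic differences are that the paper invokes Strassen's theorem rather than the explicit quantile coupling, and organizes the monotonicity check as a three-case analysis on the position of $X_{G}$ relative to the coupled pair rather than your cleaner two-region decomposition of $x \mapsto \Phi(x,y)$.
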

	
	\begin{proof} Since $S(F,F) = TF$ and $S$ is symmetric, the desired monotonicity of $T$ readily follows from that of $S$.
	
%:
	It only remains to prove the monotonicity of $S$.  Fix $F_{+},F_{-},G \in CDF(\overline{\mathbb{R}})$ such that $F_{+} \leq F_{-}$ pointwise in $\mathbb{R}$.  Recall that, by Strassen's Theorem, this is equivalent to the existence of a Borel probability measure $\Pi$ on $\overline{\mathbb{R}} \times \overline{\mathbb{R}}$ such that $\Pi \{ (x,y) \, \mid \, x \leq y \}= 1$ and the first and second marginals of $\Pi$ are distributed according to $F_{-}$ and $F_{+}$, respectively.
	
	Let $(X_{1}^{-},X_{1}^{+},X_{2})$ be a random vector such that $(X_{1}^{-},X_{1}^{+})$ is distributed according to $\Pi$, independent of $X_{2}$, and $X_{2}$ is distributed according to $G$.  Note that $X_{1}^{-} \leq X_{2}^{+}$ almost surely.  Define $Y^{+} = \Phi(X_{1}^{+},X_{2})$ and $Y^{-} = \Phi(X_{1}^{-},X_{2})$, where $\Phi$ is independent of $(X_{1}^{-},X_{1}^{+},X_{2})$.  By definition, $S(F_{\rho},G)$ is the CDF of $Y^{\rho}$ for $\rho \in \{ +, - \}$.  Thus, to conclude, it only remains to show that $Y^{-} \leq Y^{+}$ holds almost surely.
	
	This can be checked via a case analysis.  The details for the event $\{\Theta = 1\}$ are provided below; the case when $\{\Theta = 0\}$ follows by symmetry as in Section \ref{S: symmetry}.  Assume henceforth that $\Theta = 1$.
	
	If $X_{2} \leq X_{1}^{-} \leq X_{1}^{+}$, then the assumption that the map $u \mapsto u + f(u)$ is nondecreasing (see \eqref{E: map}) implies
	\begin{align*}
		Y^{-} = X^{-}_{1} + f_{+}(X^{-}_{1} - X_{2}) \leq X_{1}^{+} + f_{+}(X^{+}_{1} - X_{2}) = Y^{+}.
	\end{align*}
On the other hand, if $X^{-}_{1} \leq X_{2} \leq X^{+}_{1}$, then invoking first the fact that $f_{+}$ is nonincreasing (see \eqref{E: nonincreasing}) and then \eqref{E: map}, one finds
	\begin{align*}
		Y^{-} = X_{2} + f_{+}(X_{2} - X^{-}_{1}) &\leq X_{2} + f_{+}(0) \\
			&= X_{2} + f_{+}(X_{2} - X_{2}) \leq X^{+}_{1} + f_{+}(X^{+}_{1} - X_{2}) = Y^{+}.
	\end{align*}
Finally, if $X^{-}_{1} \leq X^{+}_{1} \leq X_{2}$, then, again, the fact that $f_{+}$ is nonincreasing implies
	\begin{align*}
		Y^{-} = X_{2} + f_{+}(X_{2} - X^{-}_{1}) \leq X_{2} + f_{+}(X_{2} - X^{+}_{1}) = Y^{+}.
	\end{align*}
\end{proof}

Using coupling arguments very similar to the one employed above, one readily deduces that the map $T = T(p,\mathbf{P})$ is nonincreasing with respect to $p$ and $f_{+}$ and nondecreasing with respect to $f_{-}$.  The result is stated next for the sake of precision.

	\begin{prop} \label{P: monotone in data} (i) For any $p_{1} \leq p_{2}$, $T(p_{1},\mathbf{P}) F \geq T(p_{2},\mathbf{P})F$ for all $F \in CDF(\overline{\mathbb{R}})$. 
	
	(ii) If $\mathbf{P}$ and $\mathbf{\tilde{P}}$ are measures on $BC([0,\infty) \times BC([0,\infty)$ for which it is possible to find a coupling of the corresponding random variables $(f_{+},f_{-})$ and $(\tilde{f}_{+},\tilde{f}_{-})$ such that $f_{+} \leq \tilde{f}_{+}$ and $f_{-} \geq \tilde{f}_{-}$, then $T(p,\mathbf{P}) F \geq T(p,\mathbf{\tilde{P}})F$ for each $F \in CDF(\overline{\mathbb{R}})$. \end{prop}

%\subsection{Translation-Invariance} In the proof of the existence of self-similar, linear solutions of the RDE \eqref{E: main RDE}, we will invoke the result of \cite{fang_zhao}, which concerns translation-invariant semigroups.  Under the assumptions considered here, $T$ is indeed translation-invariant, as  made precise in the next result.
%
%	\begin{prop} If for an arbitrary $F \in CDF(\overline{\mathbb{R}})$ and $y \in \mathbb{R}$, the translate $F_{y}$ of $F$ by $y$ is defined by by $F_{y}(x) = F(x - y)$, then $(TF)_{y} = T(F_{y})$ for any $F$ and $y$.   \end{prop}
%	
%		\begin{proof} It suffices to observe that if $F$ is the CDF of $X$, then $F_{y}$ is the CDF of $X + y$, and $\Phi( x_{1} + y, x_{2} + y ) = \Phi(x_{1},x_{2}) + y$ by definition. \end{proof}

\subsection{Continuity} For the sake of developing intuition, it may be useful to note that $T$ maps the space of continuous CDF's to itself.

	\begin{prop} Assume that $\mathbf{P} \{ f_{+},f_{-} \in \mathcal{S} \} = 1$.  If $F \in CDF(\overline{\mathbb{R}})$ is continuous, then $TF$ is also continuous. \end{prop}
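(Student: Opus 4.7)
The plan is to reformulate the statement probabilistically: $TF$ is continuous if and only if the random variable $Y = \Phi(X_{1},X_{2})$ from \eqref{E: T definition} has no atoms in $\mathbb{R}$, i.e., $\mathbb{P}\{Y = y\} = 0$ for every $y \in \mathbb{R}$. Conditioning on the realization of $(\Theta, f_{+}, f_{-})$ and invoking $\mathbf{P}\{f_{+},f_{-} \in \mathcal{S}\} = 1$, it suffices to establish the atom-free property almost surely in the forcing, so $\Phi$ may be treated as deterministic with $f_{\pm} \in \mathcal{S}$. The symmetry recalled in Section \ref{S: symmetry} allows the further reduction to the case $\Theta = 1$; the case $\Theta = 0$ is obtained by applying the same analysis to $-Y$ with $f_{-}$ in place of $f_{+}$, and continuity of a CDF is invariant under reflection. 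Under these reductions $Y = \max\{X_{1},X_{2}\} + f_{+}(|X_{1} - X_{2}|)$, and one fixes an arbitrary $y \in \mathbb{R}$.

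The central step is to introduce $h_{+}(u) = u + f_{+}(u)$ for $u \geq 0$, which by \eqref{E: nonincreasing}--\eqref{E: map} is continuous and nondecreasing on $[0,+\infty)$. On the event $\{X_{1} \geq X_{2}\} \cap \{X_{1}, X_{2} \in \mathbb{R}\}$, rewriting $Y = X_{2} + h_{+}(X_{1} - X_{2})$ converts $\{Y = y\}$ into $\{X_{1} - X_{2} \in h_{+}^{-1}(\{y - X_{2}\})\}$. For each $x_{2}$, the preimage $h_{+}^{-1}(\{y - x_{2}\})$ is a closed subinterval $[a(x_{2}), b(x_{2})]$ of $[0,+\infty)$ (possibly empty), so by independence of $X_{1}$ and $X_{2}$,
\begin{equation*}
\mathbb{P}\{Y = y,\, X_{1} \geq X_{2},\, X_{1},X_{2} \in \mathbb{R}\} = \int_{\mathbb{R}} \bigl[ F(x_{2} + b(x_{2})) - F(x_{2} + a(x_{2})) \bigr] \, dF(x_{2}),
\end{equation*}
with the convention that the integrand is zero where the preimage is empty. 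The integrand vanishes on $\{x_{2} : a(x_{2}) = b(x_{2})\}$ by the continuity of $F$. On $\{x_{2} : a(x_{2}) < b(x_{2})\}$, the value $y - x_{2}$ lies in a maximal flat interval of $h_{+}$; since these intervals are pairwise disjoint, this set of $x_{2}$ is at most countable and therefore $F$-null. The integral vanishes, and the symmetric event $\{X_{1} < X_{2}\}$ is handled identically.

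It remains to check that mass of $F$ at $\pm\infty$ produces no atom of $Y$ in $\mathbb{R}$. If exactly one of $X_{1}, X_{2}$ is infinite, the extended-real computation gives either $Y = \pm\infty$ (hence never equal to $y$) or $Y$ equals the finite one of $X_{1},X_{2}$ shifted by the constant $f_{+}(+\infty)$, which is finite since $f_{+}$ is bounded continuous, and thus inherits the continuous distribution of the finite variable. If both variables are infinite, $Y = \pm\infty$. Combining the cases yields $\mathbb{P}\{Y = y\} = 0$ for all $y \in \mathbb{R}$, completing the proof. The main conceptual point to identify is that atoms of $Y$ can only be produced through flat values of $h_{+}$; once this is recognized, the monotonicity built into $\mathcal{S}$ reduces matters to the countability of flat levels of a monotone function, which is dispatched by the continuity of $F$.
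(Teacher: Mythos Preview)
Your proof is correct and follows essentially the same approach as the paper: reduce to showing $Y$ has no atoms, condition on one coordinate, and split into the cases where the level set of $h_{+}(u)=u+f_{+}(u)$ is a point (handled by continuity of $F$) versus a nondegenerate interval (handled by countability of flat values of a monotone function, together with $F$ having no atoms). The only notable difference is that you explicitly address the extended-real cases $X_{i}=\pm\infty$, which the paper's proof omits; this is a welcome addition given that the proposition is stated for $F\in CDF(\overline{\mathbb{R}})$.
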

	
		\begin{proof} Let $(X_{1},X_{2})$ be i.i.d.\ random variables distributed according to $F$ and let $Y = \Phi(X_{1},X_{2})$, where $\Phi$ is sampled independently of $(X_{1},X_{2})$.  Since $TF$ is the CDF of $Y$, to see that it is continuous, it suffices to prove that $\mathbb{P} \{ Y = x_{0} \} = 0$ for each $x_{0} \in \mathbb{R}$.  
		
		Consider the event $\Theta = 1$: Since the law of $(X_{1},X_{2})$ is invariant under permutation of the coordinates, 
			\begin{align*}
				\mathbb{P}\{ Y = x_{0} \, \mid \, \Theta = 1\} &= 2 \int_{-\infty}^{\infty} \mathbb{P} \{ X_{2} - x_{1} + f(X_{2} - x_{1}) = x_{0} - x_{1}, \, \, X_{2} > x_{1} \} \mu(dx_{1}),
			\end{align*}
		where $\mu$ is the law of $X_{1}$.  Since $u \mapsto u + f(u)$ is nondecreasing by \eqref{E: right inverse}, there are two possibilities: Either the set $\{ u \geq 0 \, \mid \, u + f(u) = x_{0} - x_{1}\}$ is an interval, or it is a point.  If it is a point, then the corresponding probability in the integral above vanishes since $X_{2}$ is a continuous random variable.  On the other hand, there are at most countably many disjoint intervals on which $u + f(u)$ is constant, hence countably many values of $x_{0} - x_{1}$.  Since $\mu$ has no atoms, these values of $x_{0} - x_{1}$ do not contribute to the integral either.  Therefore, the probability above is zero.  
		
		A similar analysis shows that $\mathbb{P} \{ Y = x_{0} \, \mid \, \Theta = 0\} = 0$.     \end{proof}

%%%%%%%%%%%%%%%%%%%%%%%%%%%%%%%%%%%%%%%%%%%%%%%%%%%%%%%%%%%%%%%%%%%%%%%%%%%%%%%%%%%%%%%%%%%%%%%%%%%%%%%%%%%%

%%%%% DERIVATION OF THE EQUATION ( v9 )

%%%%%%%%%%%%%%%%%%%%%%%%%%%%%%%%%%%%%%%%%%%%%%%%%%%%%%%%%%%%%%%%%%%%%%%%%%%%%%%%%%%%%%%%%%%%%%%%%%%%%%%%%%%%

\section{Evolution of the CDF} \label{S: equation}

This section concerns the evolution equation \eqref{E: KPP-like equation} satisfied by CDF's.  To this end, it will be useful to decompose the random variable $Y$ in the definition of $T$ (see \eqref{E: T definition}) in a specific way.  Given $F \in CDF(\overline{\mathbb{R}})$, let $X_{1},X_{2}$ be i.i.d.\ extended real-valued random variables distributed according to $F$, and sample the data $(\Theta,f_{+},f_{-})$ independently of $(X_{1},X_{2})$.  Define the extended real-valued random variable $Z$ by 
	\begin{align} \label{E: eqn for Z}
		Z &= \left\{ \begin{array}{r l}
				\max\{X_{1},X_{2}\}, & \text{if} \, \, \Theta = 1, \\
				\min\{X_{1},X_{2}\}, & \text{if} \, \, \Theta = 0.
			\end{array} \right.
	\end{align}
so that the random variable $Y$ of \eqref{E: T definition} can be written in the form
	\begin{align} \label{E: eqn for Y}
		Y &=  Z + \Theta f_{+} ( |X_{2} - X_{1}|) + (1 - \Theta) f_{-}(|X_{2} - X_{1}|).
	\end{align}

The next result describes the difference $TF - F$  in terms of $(Y,Z,\Theta)$.  To that end, let $\mathscr{L}F$ be the function defined by
		\begin{align} \label{E: definition of L}
			\mathscr{L}F(x) = - p \mathbb{P} \{ Z \leq x < Y \mid \Theta = 1 \} + ( 1 - p ) \mathbb{P} \{ Y \leq x < Z \mid \Theta = 0 \}.
		\end{align}

	\begin{prop} \label{P: evolution eqn}   For any $F \in CDF(\overline{\mathbb{R}})$.
		\begin{equation*}
			TF - F = \mathscr{L}F + (1 - 2p) F ( 1 - F).
		\end{equation*}
	\end{prop}
	
Using the explicit expressions \eqref{E: eqn for Z} and \eqref{E: eqn for Y}, one obtains very concrete expressions for the action of $\mathscr{L}$ on smooth CDF's, as stated in the next result.

\begin{prop} \label{P: formula for L} Assume that $\mathbf{P}\{f_{+},f_{-} \in \mathcal{S}\} = \mathbf{P} \{ f_{+}(+\infty) = f_{-}(+\infty) = 0 \} = 1$.  If $F \in CDF(\overline{\mathbb{R}})$ is absolutely continuous, then
	\begin{align} \label{E: jump process}
		\mathscr{L}F(x) &= 2 p \int_{-\infty}^{x}  \mathbf{E} [ F(x_{1} + g_{+}(x - x_{1})) - F(x) ] F'(x_{1}) \, dx_{1} \\
			&\quad +  2 ( 1 - p )  \int_{x}^{+\infty} \mathbf{E} [ F(x_{1} - g_{-}(x_{1} - x) ) - F(x) ] F'(x_{1}) \, dx_{1}. \nonumber
	\end{align}
\end{prop}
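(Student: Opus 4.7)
The plan is to compute each of the two conditional probabilities in the definition \eqref{E: definition of L} of $\mathscr{L}F(x)$ by explicit integration against the product density $F'(x_1)F'(x_2)$, and to rewrite the resulting indicators using $g_\pm$. The central ingredient is the elementary identity
\[
u + f_+(u) > s \quad \Longleftrightarrow \quad u > g_+(s) \qquad \text{for all } u \geq 0, \ s \in \mathbb{R},
\]
which follows from the continuity and monotonicity of $u \mapsto u + f_+(u)$ (property \eqref{E: map}) together with the definition \eqref{E: right inverse} of $g_+$; the convention $g_+(s) = 0$ when $s < f_+(0)$ is consistent because then $u + f_+(u) \geq f_+(0) > s$ for every $u \geq 0$. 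The analogous statement holds for $g_-$. I would record this identity at the outset.

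For the $\Theta = 1$ contribution, $Z = \max\{X_1,X_2\}$ and $Y = Z + f_+(|X_2 - X_1|)$. Since $F$ is smooth, the diagonal $X_1 = X_2$ has zero probability, so by exchangeability of the i.i.d.\ pair $(X_1,X_2)$,
\begin{align*}
\mathbb{P}\{Z \leq x < Y \mid \Theta = 1\}
&= 2\int_{-\infty}^{x}\!\int_{x_1}^{x}\! \mathbf{P}\bigl\{x_2 + f_+(x_2 - x_1) > x\bigr\}\, F'(x_2)\, F'(x_1)\, dx_2\, dx_1.
\end{align*}
With $u = x_2 - x_1$ and $s = x - x_1$, the key identity converts the inner probability into $\mathbf{P}\{x_2 > x_1 + g_+(x - x_1)\}$. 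Because $g_+(s) \leq s$, this cutoff lies in $[x_1, x]$, so interchanging $\mathbf{E}$ with the $x_2$-integration via Fubini evaluates the inner integral to $\mathbf{E}[F(x) - F(x_1 + g_+(x - x_1))]$. Multiplying by $-p$ recovers the first term of \eqref{E: jump process}.

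The $\Theta = 0$ contribution is handled either by the symmetry of Section \ref{S: symmetry}, which reduces the calculation to the $\Theta = 1$ case applied to $\widetilde F(x) = 1 - \lim_{\delta \downarrow 0} F(-x - \delta)$ with $(f_+, f_-)$ interchanged, or by a direct repetition: on $\{X_1 < X_2\}$, the event $\{Y \leq x < Z\}$ requires $X_1 > x$ together with $X_1 - f_-(X_2 - X_1) \leq x$; the latter rearranges to $(X_2 - X_1) + f_-(X_2 - X_1) \geq X_2 - x$, and the $g_-$ analogue of the key identity then gives $X_1 \leq X_2 - g_-(X_2 - x)$. After exchanging the order of integration so that the larger coordinate is outermost, the inner integral evaluates to $\mathbf{E}[F(x_1 - g_-(x_1 - x)) - F(x)]$, which yields the second term of \eqref{E: jump process}.

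The substantive step is the inversion identity relating the monotone map $u + f_+(u)$ to $g_+$; everything else is routine bookkeeping. A mild subtlety is that the inequalities appearing naturally in the $\Theta = 0$ case are non-strict while the identity is phrased with strict inequalities, but the smoothness of $F$ implies that the relevant boundary set has measure zero under the integrals, so this discrepancy has no effect.
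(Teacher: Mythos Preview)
Your argument is correct and follows essentially the same route as the paper: reduce to one ordering of $(X_1,X_2)$ by exchangeability, translate the event $\{Z \leq x < Y\}$ into an inequality for $X_2 - X_1$ via the inversion identity between $u \mapsto u + f_+(u)$ and $g_+$, integrate, and then invoke the symmetry of Section~\ref{S: symmetry} (or repeat directly) for $\Theta = 0$. The only point you skip that the paper makes explicit is the role of the hypothesis $f_\pm(+\infty) = 0$: since $F \in CDF(\overline{\mathbb{R}})$ may place mass at $\pm\infty$ even when smooth on $\mathbb{R}$, one must check that the events $\{X_1 = -\infty\}$ (on $\Theta = 1$) and $\{X_1 = +\infty\}$ (on $\Theta = 0$) do not contribute to the conditional probabilities before writing them as integrals against the density $F'(x_1)F'(x_2)$; this is exactly where $f_+(+\infty) = 0$ is used, since then $Y = Z$ on $\{X_1 = -\infty,\ \Theta = 1\}$. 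Also note that your stated equivalence $u + f_+(u) > s \iff u > g_+(s)$ fails at $u = 0$ when $s < f_+(0)$ (both sides should read $\geq$ there), but as you already restrict to $X_1 < X_2$ this has no effect.
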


Propositions \ref{P: evolution eqn} and \ref{P: formula for L} immediately imply two properties of $\mathscr{L}$ that will be useful in the sequel and are stated in the next proposition.

	\begin{prop} \label{P: properties of L} Under the assumption that $\mathbf{P}\{f_{+},f_{-} \in \mathcal{S}\} = \mathbf{P} \{ f_{+}(+\infty) = f_{-}(+\infty) = 0 \} = 1$, the function $\mathscr{L}$ has the following two properties:
		\begin{itemize}
			\item[(i)] $(\mathscr{L}F)(+\infty) = (\mathscr{L}F)(-\infty) = 0$ for each $F \in CDF(\overline{\mathbb{R}})$.
			\item[(ii)] Given $F,G \in CDF(\overline{\mathbb{R}})$, if the difference $F - G$ is a constant function, then $\mathscr{L}F = \mathscr{L}G$.
		\end{itemize}
	In particular, $T$ maps constant functions to constant functions: Its action on constants is determined by the dynamical system 
		\begin{align*}
			T q - q =  (1 - 2p) q ( 1 - q) \quad \text{for each} \, \, q \in [0,1].
		\end{align*}
	\end{prop}
	
As a consequence of property (ii) and the monotonicity of $T$, the following continuity property of $T$ follows, namely,
	\begin{equation*}
		\| TF - TG \|_{\sup} \leq 2 ( 1 - p ) \| F - G \|_{\sup} 
	\end{equation*}
In particular, $T$ is a contraction when $p = 1/2$.  See Appendix \ref{A: contractivity} for the proof.

\subsection{Proof of Proposition \ref{P: evolution eqn}} \label{S: rep formula} Fix a CDF $F$ and define $(X_{1},X_{2},Z,Y,\Theta,f_{+},f_{-})$ as in the discussion preceding the statement of the proposition.  Start by decomposing the difference $TF - F$ as follows
	\begin{align} 
		TF(x) - F(x) &= \mathbb{P} \{ Y \leq x \} - F(x) \nonumber \\
			&= \Big( \mathbb{P}\{Y \leq x\} - \mathbb{P} \{ Z \leq x\} \Big) + \Big( \mathbb{P} \{ Z \leq x \} - F(x) \Big). \label{E: decomp}
	\end{align}
Observe that, since $f_{+}, f_{-} \geq 0$,
	\begin{align*}
		Y - Z \geq 0 \quad \text{if} \, \, \Theta = 1, \quad \text{and} \quad Y - Z  \leq 0 \quad \text{if} \, \, \Theta = 0.
	\end{align*}
Thus, since $p = \mathbb{P} \{ \Theta = 1\}$ by definition,
	\begin{align*}
		\mathbb{P}\{Y \leq x\} - \mathbb{P}\{Z \leq x\} &= -p \mathbb{P} \{ Z \leq x < Y \mid \Theta = 1\} + (1 - p) \mathbb{P} \{ Y \leq x < Z \mid \Theta = 0\} \\
			&= \mathscr{L}F(x).
	\end{align*}
	
It remains to treat the remaining summand, $\mathbb{P}\{Z \leq x\} - F(x)$.  Notice that 
	\begin{align*}
		\{ Z \leq x \} = \{ \Theta = 1, X_{1} \leq x \, \, \text{and} \, \, X_{2} \leq x \} \cup \{ \Theta = 0, X_{1} \leq x \, \, \text{or} \, \, X_{2} \leq x \}.
	\end{align*}
Thus, since $X_{1}$ and $X_{2}$ are i.i.d.\ with CDF $F$, one obtains, exactly as in \cite[Lemma 1.1]{hambly_jordan}, 
	\begin{align*}
		\mathbb{P} \{ Z \leq x \} = p F(x)^{2} + (1 - p) ( 1 - (1 - F(x))^{2} ).
	\end{align*}
Upon subtracting $F(x)$ and simplifying, this becomes
	\begin{align*}
		\mathbb{P} \{Z \leq x\} - F(x) = (1 - 2p) F(x) ( 1 - F(x) ).
	\end{align*}

\begin{remark} In \cite[Lemma 1.1]{hambly_jordan}, it is observed that the fixed points of the map $q \mapsto p q^{2} + (1 - p) (1 - (1 - q)^{2} )$ are fundamental in the study of the asymptotics of the series-parallel graph.  Yet, as in the derivation above, a fixed point of this map is nothing but a zero of the polynomial $(1 - 2p ) q ( 1 - q )$, the ``reaction term" appearing in the evolution equation \eqref{E: KPP-like equation}.  \end{remark}

\subsection{Proof of Proposition \ref{P: formula for L}} \label{S: smooth case} Suppose that $F \in CDF(\overline{\mathbb{R}})$ is absolutely continuous and fix $x \in \mathbb{R}$.  To obtain the formula \eqref{E: jump process} for $\mathscr{L}F(x)$ in this case, it suffices to establish that 
	\begin{align*}
		\mathbb{P} \{ Z \leq x < Y \mid \Theta = 1\} = -2 \int_{-\infty}^{x} \mathbf{E} [ F(x_{1} + g_{+}(x - x_{1})) - F(x) ] F'(x_{1}) \, dx_{1}, \\
		\mathbb{P} \{ Y \leq x < Z \mid \Theta = 0\} = 2 \int_{x}^{\infty} \mathbf{E} [ F(x_{1} - g_{-}(x_{1} - x)) - F(x) ] F'(x_{1}) \, dx_{1}.
	\end{align*}
Since the law of $(X_{1},X_{2})$ is invariant under permutation of the coordinates, this is equivalent to
	\begin{align}
		\mathbb{P} \{ Z \leq x < Y, X_{1} \leq X_{2} \mid \Theta = 1\} = -\int_{-\infty}^{x} \mathbf{E} [ F(x_{1} + g_{+}(x - x_{1})) - F(x) ] F'(x_{1}) \, dx_{1}, \nonumber \\
		\mathbb{P} \{Y \leq x < Z, X_{2} \leq X_{1} \mid \Theta = 0\} = \int_{x}^{\infty} \mathbf{E} [ F(x_{1} - g_{-}(x_{1} - x)) - F(x) ] F'(x_{1}) \, dx_{1}. \nonumber
	\end{align}
In particular, in the analysis that follows, only the case where $Z = X_{2}$ needs to be treated.  

%Further, there is no loss of generality assuming that $x \in \mathbb{R}$.  On the one hand, since $f_{+}$ and $f_{-}$ are almost surely in $BC([0,\infty))$,
%	\begin{align*}
%		\lim_{|x| \to +\infty} \mathbb{P} \{ Z \leq x < Y \mid \Theta = 1\} = \lim_{ |x| \to +\infty} \mathbb{P} \{Y \leq x < Z \mid \Theta = 0\} = 0.
%	\end{align*} 
%On the other, since $g_{\pm}(+\infty) = +\infty$ by the boundedness of $f_{\pm}$, the integrals above vanish as $|x| \to +\infty$ by Lebesgue's dominated convergence theorem. 

\subsubsection{Case: $\Theta = 1$} Condition first on the event $\{\Theta = 1\}$. Before going further, it is worth noting that the assumption that $f_{+}(+\infty) = 0$ implies that $X_{1} > -\infty$ almost surely on the event $\{ Z \leq x < Y, \, \,  X_{1} \leq X_{2}, \, \, \Theta = 1\}$ for $x \in \mathbb{R}$.  This is immediate from the fact that $Y = X_{2} = Z$ on the event $\{X_{1} = -\infty, \, \, \Theta = 1, \, \, f_{+}(+\infty) = 0\}$.  

Subtracting $X_{1}$ from both sides of the equation defining $Y$ while assuming that $X_{2} \geq X_{1}$ leads to
	\begin{align*}
		Y - X_{1} = (X_{2} - X_{1}) + f_{+}(X_{2} - X_{1}).
	\end{align*}
At the same time, notice that $g_{+}$ is defined precisely so that, for any $u > 0$ and $s \geq 0$, 
	\begin{align*}
		g_{+}(s) < u \quad \text{if and only if} \quad s < u + f_{+}(u).
	\end{align*}
Thus,
	\begin{align*}
		&\{ Z \leq x < Y, \, \, X_{1} < X_{2}, \, \, \Theta = 1 \} \\
		&\qquad= \{ -\infty < X_{1} < X_{2} \leq x, \, \, g_{+}(x - X_{1}) < X_{2} - X_{1}, \, \, \Theta = 1  \} \\
			&\qquad= \{ -\infty < X_{1} < x, \, \, X_{1} + g_{+}(x - X_{1}) < X_{2} \leq x, \, \, \Theta = 1 \}.
	\end{align*}
Therefore, since $(\Theta,f_{+},X_{1},X_{2})$ are mutually independent and $\mathbb{P} \{ X_{1} \leq X_{2} \} = \mathbb{P} \{ X_{1} < X_{2} \}$ by the continuity of $F$,
	\begin{align*}
		\mathbb{P} \{ Z \leq x < Y, X_{1} \leq X_{2} \mid \Theta = 1\} &= -\int_{-\infty}^{x} \mathbf{E} [ F(x_{1} + g_{+}(x - x_{1})) - F(x) ] F'(x_{1}) \, dx_{1}.
	\end{align*}

\subsubsection{Case: $\Theta = 0$} The analysis of $\mathbb{P} \{Y \leq x < Z, X_{2} \leq X_{1} \mid \Theta = 0\}$ follows from arguments very similar to those used when $\Theta = 1$.  In fact, the desired formula can be quickly derived from the fact that the transformation $(X_{1},X_{2},Z,Y) \mapsto (-X_{1},-X_{2},-Z,-Y)$ preserves the structure of the problem (see Section \ref{S: symmetry}). \qed

\subsection{Proof of Proposition \ref{P: properties of L}} Here is the proof of (i): Fix $F \in CDF(\overline{\mathbb{R}})$.  Observe that, by continuity of measure,
	\begin{align*}
		\lim_{x \to +\infty} \mathbb{P} \{ Z \leq x < Y \, \mid \, \Theta = 1 \} &= \lim_{x \to + \infty} \left( \mathbb{P} \{ Y > x \, \mid \, \Theta = 1 \} - \mathbb{P} \{ Z > x \, \mid \, \Theta = 1 \} \right) \\
			&= \mathbb{P} \{ Y = +\infty \, \mid \, \Theta = 1 \} - \mathbb{P} \{ Z = +\infty \, \mid \, \Theta = 1\}  = 0
	\end{align*}
since $\{ Y = +\infty \} = \{Z = +\infty\}$ by the boundedness of $f_{+}$ and $f_{-}$.   Similarly, 
	\begin{align*}
		\lim_{x \to -\infty} \mathbb{P} \{ Z \leq x < Y \, \mid \, \Theta = 1 \} = \mathbb{P} \{ Z = - \infty \, \mid \, \Theta = 1 \} - \mathbb{P} \{ Y = - \infty \, \mid \, \Theta = 1 \} = 0
	\end{align*}
as $\{ Y = - \infty \} = \{ Z = - \infty\}$.  By symmetry,
	\begin{align*}
		\lim_{x \to +\infty} \mathbb{P} \{ Y \leq x < Z \, \mid \, \Theta = 0\} = \lim_{x \to -\infty} \mathbb{P} \{ Y \leq x < Z \, \mid \, \Theta = 0 \} = 0.
	\end{align*}
Therefore, by \eqref{E: definition of L}, $(\mathscr{L}F)(+\infty) = (\mathscr{L}F)(-\infty) = 0$ as claimed.

Next, the proof of (ii): Suppose that $F,G \in CDF(\overline{\mathbb{R}})$ differ by a constant, hence $G = F + c$ for some $c \in \mathbb{R}$.  First, if $F$ is smooth, then equation \eqref{E: jump process} applied to $F$ and $F+c$ implies $ \mathscr{L}(F + c) = \mathscr{L}F$.  In general, there is a sequence $\{F_{n}\}$ of smooth CDF's that converges vaguely to $F$ and such that $F_{n}(\pm \infty) = F(\pm \infty)$ for any $n$.  By Proposition \ref{P: vague convergence}, $\{ T F_{n} \}$ and $\{ T(F_{n} + c)\}$ converge vaguely to respective limits $TF$ and $T(F + c)$.  At the same time, since the polynomial $P(q) = q + (1 - 2p) q ( 1 - q )$ is smooth, the functions $\{ P(F_{n}) \}$ converge vaguely to $\{ P(F) \}$.  Therefore, for a dense set of points $x \in \mathbb{R}$,
	\begin{align*}
		T(F + c)(x) - P(F + c)(x) = \lim_{n \to \infty} \mathscr{L}(F_{n} + c)(x) = \lim_{n \to \infty} \mathscr{L}F_{n}(x) = TF(x) - PF(x).
	\end{align*}
Rearranging, this implies $T(F + c) + PF = TF + P(F + c)$ on a dense set.  Since right- and lefthand sides are right-continuous (being sums of such functions), this actually implies they are equal everywhere, and, thus, 
	\begin{align*}
		\mathscr{L}(F + c) = T(F + c) - P(F + c) = TF - PF = \mathscr{L}F.
	\end{align*}

Finally, consider a constant CDF $F \equiv q$ for some $q \in [0,1]$.  Writing $TF = \mathcal{L} F + P ( F )$ as in the first part of the proof, observe that
	\begin{align*}
		(TF)(+ \infty ) = ( \mathcal{L} F )( + \infty ) + P(q) = P(q),
	\end{align*}
and, similarly, $TF( - \infty ) = P(q)$.  Therefore, being nondecreasing, $TF$ must be a constant function, equal to $P(q)$.  Identifying $F$ with $q$, this proves that $T$ maps $q$ to the constant $P(q)$, as claimed. \qed

%%%%%%%%%%%%%%%%%%%%%%%%%%%%%%%%%%%%%%%%%%%%%%%%%%%%%%%%%%%%%%%%%%%%%%%%%%%%%%%%%%%%%%%%%%%%%%%%%%%%%%%%%%%%

%%%%% Examples ( v14 )

%%%%%%%%%%%%%%%%%%%%%%%%%%%%%%%%%%%%%%%%%%%%%%%%%%%%%%%%%%%%%%%%%%%%%%%%%%%%%%%%%%%%%%%%%%%%%%%%%%%%%%%%%%%%

\section{Examples} \label{S: examples} 

This section describes a few relevant examples and consequences of the main theorems, Theorems \ref{T: main diffusive} and \ref{T: main subdiffusive}.

\subsection{Resistance of the series-parallel graph} \label{S: resistance} As mentioned already in the introduction, if $X^{(n)} = \log R^{(n)}$ is the logarithm of the resistance of the series-parallel graph, then $(X^{(n)})_{n \in \mathbb{N}}$ is a solution of the RDE \eqref{E: main RDE} provided $f_{+} = f_{-} = f$ $\mathbf{P}$-almost surely, where $f$ is given by the formula
	\begin{align*}
		f(u) = \log(1 + e^{-u}).
	\end{align*}
In this case, the right-continuous, right-inverse $g$ of the function $u \mapsto u + f(u)$ can be computed to be
	\begin{align*}
		g(s) = \log ( e^{s} - 1 ) \quad  \text{for each} \, \, s \geq \log 2.
	\end{align*}
It is immediate to check that $f \in \mathcal{S}$ and $s - g(s) = f(g(s))$ for $s \geq \log 2$ so the stronger assumption \eqref{E: decay two} of Theorems \ref{T: main diffusive} and \ref{T: main subdiffusive} is satisfied.

By symmery, $\sigma = 0$ in this setting and $a$ can be explicitly computed to be $a = 2 \zeta(3)$.  See Appendix \ref{A: polylog} for the proof.

The approach of this paper readily implies the following characterization of the asymptotics of the resistance $R^{(n)}$ of the critical series-parallel graph, in which resistors have i.i.d.\ resistances with any given law on $[0,+\infty]$.  The result allows for the possibility that $\mathbb{P} \{ R^{(0)} = 0 \} > 0$ or $\mathbb{P} \{ R^{(0)} = + \infty \} > 0$, that is, any given resistor of the graph could be open or shorted.

	\begin{corollary} \label{C: open or short circuit} Assume that $p = 1/2$ and let $R^{(0)}$ be any random variable taking values in $[0,+\infty]$.  Let $R^{(n)}$ be the resistance of the series-parallel graph at stage $n$, in which each edge has i.i.d.\ resistances with the same law as $R^{(0)}$, and let $q = \mathbb{P} \{ 0 < R^{(0)} < + \infty \}$.  Then $\mathbb{P} \{ 0 < R^{(n)} < + \infty \} = q$ for each $n$ and the law of $R^{(n)}$ conditional on this event has the following limit:
		\begin{align*}
			\lim_{N \to \infty} \mathbb{P} \{ (36 a)^{-\frac{1}{3}} N^{-\frac{1}{3}} \log R^{(N)}  \leq x \, \mid \, 0 < R^{(N)} < + \infty \} = F_{\text{Beta}(2,2)} ( q^{-\frac{1}{3}}x + \frac{1}{2}  ).
		\end{align*}
	Here $F_{\text{Beta}(2,2)}$ is the CDF of the $\text{Beta}(2,2)$ distribution and $a = 2 \zeta(3)$.\end{corollary}

Recall that the $\text{Beta}(2,2)$ distribution is determined by the PDF $6 y ( 1 - y ) \mathbf{1}_{[0,1]}(y)$. 
	
		\begin{proof} Let $X^{(n)} = \log R^{(n)}$.  As in the discussion above, $\{X^{(n)}\}$ is a solution of the RDE \eqref{E: main RDE} with $f_{+}(u) = f_{-}(u) = \log ( 1 + e^{-u})$.  Let $q_{-} = \mathbb{P} \{ X^{(0)} = - \infty \}$ and $q_{+} = \mathbb{P} \{ X^{(0)} = + \infty\}$ so that $1 - q  = q_{+} + q_{-}$.  Notice that 
			\begin{align*}
				\lim_{N \to \infty} \mathbb{P} \{ N^{-\frac{1}{3}} X^{(0)} \leq x \} = q_{-} + q \mathbf{1}_{[0,+\infty)}(x) \quad \text{for each} \, \, x \in \mathbb{R} \setminus \{0\}.
			\end{align*}
		Let $F_{N} : \mathbb{R} \times [0,+\infty) \to [0,1]$ be the rescaled CDF of $\{X^{(n)}\}$ defined as in Theorem \ref{T: main subdiffusive}.  By that theorem (which applies even though $\{X^{(n)}\}$ are extended real-valued, see Remark \ref{R: extended real valued} below), $F_{N} \to F$ locally uniformly in $\mathbb{R} \times (0,+\infty)$ as $N \to +\infty$, where $F$ is the bounded discontinuous viscosity solution of the PDE
			\begin{align*}
				\partial_{t} F - a | \partial_{x} F | \partial_{x}^{2} F = 0 \quad \text{in} \, \, \mathbb{R} \times (0,+\infty), \quad F(x,0) = q_{-} + q \mathbf{1}_{[0,+\infty)}(x).
			\end{align*}
		
	$F$ can be readily related to the solution of the same PDE, but with initial datum $\mathbf{1}_{[0,+\infty)}$ and a different coefficient $a$: Notice that if $G$ is defined by $G(x,t) = q^{-1} ( F(x,t) - q_{-} )$, then $G(x,0) = \mathbf{1}_{[0,+\infty)}(x)$ and
		\begin{align*}
			\partial_{t} G - q a | \partial_{x} G | \partial_{x}^{2} G = 0 \quad \text{in} \, \, \mathbb{R} \times (0,+\infty), \quad G(x,0) = \mathbf{1}_{[0,+\infty)}(x).
		\end{align*}
	As in the proof of Corollary \ref{C: distributional limit} (see Section \ref{S: proof of corollary}) below, $G$ can be explicitly computed to be
		\begin{align*}
			G(x,t) = F_{\text{Beta}(2,2)}( U(x,t) ), \quad \text{where} \quad U(x,t) = \frac{ x }{ (36 q a t )^{\frac{1}{3}} } + \frac{1}{2}.
		\end{align*}
	At the same time, since $\mathbb{P} \{ 0 < R^{(N)} < +\infty\} = q$ and $\mathbb{P} \{ R^{(N)} > 0\} = q_{-}$ for any $N$ by Proposition \ref{P: properties of L}, 
		\begin{align*}
			\lim_{N \to +\infty} \mathbb{P} \{ N^{-\frac{1}{3}} \log R^{(N)}  \leq x \, \mid \, 0 < R^{(N)} < + \infty \} = \lim_{N \to + \infty } \frac{1}{q} ( F_{N}(x,1) - q_{-} ) = G(x,1).
		\end{align*}\end{proof}
	
\subsection{Distance on the series-parallel graph} \label{S: distance} Let $D^{(n)}$ be the distance between the two terminal nodes in the series-parallel graph at stage $n$.  Due to the recursive structure inherent in the graph, $D^{(n)}$ is a solution of the RDE
	\begin{align*}
		D^{(n)} = \left\{ \begin{array}{r l}
						D^{(n-1)}_{1} + D^{(n-1)}_{2}, & \text{with probability} \, \, p, \\
						\min \{ D^{(n-1)}_{1}, D^{(n-1)}_{2} \}, & \text{with probability} \, \, 1 - p.
					\end{array} \right.
	\end{align*}
This RDE also arises in Pemantle's min-plus binary tree, and its asymptotics were analyzed in \cite{auffinger_cable}.  Analogous to the case of the resistance, the logarithm $X^{(n)} = \log D^{(n)}$ gives a solution of the RDE \eqref{E: main RDE} with $\Phi$ given as in \eqref{E: main forcing} provided $f_{+}(u) = \log ( 1 + e^{-u} )$ and $f_{-} = 0$.

A complete analysis of the asymptotics of this RDE in the subcritical case ($p < 1/2$) was given in \cite{hambly_jordan}:  There is a random variable $K$ such that if $\lambda$ denotes  the essential infimum of $D^{(0)}$, then $D^{(n)} \to \lambda K$ as $n \to \infty$.  This implies that $K$ is the unique (up to multiplication by a constant) fixed point of the RDE in this case. 

At the critical point $p = 1/2$, the main result of \cite{auffinger_cable} obtains a $\text{Beta}(2,1)$ limit for $N^{-1/2} \log D^{(N)}$ provided $D^{(0)} = 1$ almost surely.  The next corollary of Theorem \ref{T: main diffusive} describes the asymptotics for an arbitrary initial distribution, allowing for the possibility that the i.i.d.\ edge weights can be zero or $+\infty$.  Notice the contrast with the subcritical case: In the subcritical case, if $D^{(0)}$ can be arbitrarily small with positive probability, then $D^{(n)} \to 0$ as $n \to +\infty$, whereas here, conditionally on the event $\{ 0 < D^{(n)} < +\infty \}$, mass at zero only slows the rate of growth.

	\begin{corollary} \label{C: critical distance} Assume that $p = \frac{1}{2}$ and $\mathbb{P} \{ 0 \leq D^{(0)} \leq +\infty \} = 1$ and let $q = \mathbb{P} \{ 0 < D^{(0)} < +\infty\}$.  Then $\mathbb{P} \{ 0 < D^{(n)} < +\infty \} = q$ for any $n$ and the law of $D^{(n)}$ conditional on this event has the following limit:
		\begin{align*}
			\lim_{N \to \infty} \mathbb{P} \{ N^{-\frac{1}{2}} \log D^{(N)} \leq 2 x \sqrt{ |\sigma| }  \, \mid \, 0 < D^{(N)} < + \infty \} = \left\{ \begin{array}{r l}
						0, & \text{if} \, \, x < 0, \\
						\frac{x^{2}}{q}, & \text{if} \, \, 0 \leq x \leq \sqrt{q}, \\
						1, & \text{otherwise.}
					\end{array} \right.
			\end{align*}
	Here the constant $\sigma = - \frac{1}{2} \zeta(2) = - \frac{\pi^{2}}{12}$.
	\end{corollary}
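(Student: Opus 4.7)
The plan is to apply Theorem \ref{T: main} to the CDFs of $X^{(n)} = \log D^{(n)}$, modified so as to accommodate the atom of size $1 - q$ at $-\infty$ coming from $\{D^{(0)} = 0\}$.

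First I would verify the invariance of $q_n := \mathbb{P}\{D^{(n)} > 0\}$. Since positivity of a sum requires at least one positive summand and positivity of a minimum requires both summands to be positive, the RDE yields $q_{n+1} = \tfrac{1}{2}[1 - (1 - q_n)^2] + \tfrac{1}{2} q_n^2 = q_n$. (I read the displayed line in the corollary as $\mathbb{P}\{D^{(n)} > 0\} = q$, the only reading compatible with the stated definition of $q$.) Next I would compute $\sigma$: with $f_- \equiv 0$ one has $g_-(s) = s$, so the $(1-p)$-integral in the formula for $\sigma$ vanishes; for $f_+(u) = \log(1 + e^{-u})$ the right-inverse is $g_+(s) = \log(e^s - 1)\mathbf{1}_{s \geq \log 2}$, and the substitution $u = e^{-s}$ together with the identity $\mathrm{Li}_2(1/2) = \pi^2/12 - \tfrac{1}{2}(\log 2)^2$ yields $\int_0^\infty (s - g_+(s))\, ds = \tfrac{1}{2}(\log 2)^2 + \mathrm{Li}_2(1/2) = \pi^2/12$, hence $\sigma = -\pi^2/12 = -\tfrac{1}{2}\zeta(2)$.

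The core step is to execute the PDE scaling limit. Because $TF(\pm \infty) = F(\pm\infty)$ when $p = 1/2$ (Proposition \ref{P: evolution eqn}) and because $T$ commutes with additive constants (Proposition \ref{P: commutation with constants}), the atom of size $1 - q$ at $-\infty$ persists under the evolution and the shifted CDF $F_n - (1-q)$ is the image of a sub-CDF under $T$. Adapting the abstract convergence result of Appendix \ref{A: monotone schemes}, I would show that $F_\epsilon(x, t) = F_{[\epsilon^{-2} t]}(\epsilon^{-1} x)$ converges locally uniformly in $\mathbb{R} \times (0, \infty)$ to the unique bounded discontinuous viscosity solution of $\partial_t F - \sigma|\partial_x F|^2 = 0$ with initial datum $(1-q)\mathbf{1}_{(-\infty, 0)} + \mathbf{1}_{[0,\infty)}$. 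Because this Hamilton--Jacobi equation is invariant under $F \mapsto F + c$ and under the rescaling $F(x, t) \mapsto q F(x, qt)$, the solution takes the explicit form $F(x, t) = (1-q) + q \tilde F(x, qt)$, where $\tilde F(x, t) = x^2/(4|\sigma| t)$ on $[0, 2\sqrt{|\sigma| t}]$ (extended by $0$ and $1$ outside) is the self-similar Heaviside profile underlying Corollary \ref{C: distributional limit}(i). Evaluating at $\epsilon = 1/\sqrt{N}$, $t = 1$, and $x = 2\sqrt{|\sigma|}\, y$ gives $F_N(2\sqrt{|\sigma| N}\, y) \to (1-q) + y^2$ on $[0, \sqrt{q}]$, and the claimed conditional CDF $y^2/q$ follows after subtracting $1 - q$ and dividing by $q$.

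The main obstacle is the justification of the modified scaling limit. Theorem \ref{T: main} as stated produces the Heaviside-initial-data solution, and its proof implicitly uses that the CDFs take values in the full interval $[0, 1]$; extending it to $F_0(-\infty) = 1 - q > 0$ requires revisiting the monotonicity, consistency, and stability ingredients of Appendix \ref{A: monotone schemes} to confirm that they still deliver the viscosity solution with the correct asymptotic value at $-\infty$. The cleanest route is probably to apply Proposition \ref{P: commutation with constants} to reduce to the sub-CDF $q \tilde F_n$, using that the consistency expansion \eqref{E: asymptotic expansion intro} depends only on derivatives of $F$ and is therefore insensitive to the additive constant $1 - q$.
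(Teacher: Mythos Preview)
Your proposal is correct and follows essentially the same route as the paper: pass to $X^{(n)}=\log D^{(n)}$, note that the atom at $-\infty$ is preserved, apply the PDE scaling limit to obtain the Hamilton--Jacobi solution with initial datum $(1-q)+q\mathbf{1}_{[0,\infty)}$, and then rescale to read off the conditional CDF. The ``main obstacle'' you flag is not actually an obstacle: the paper invokes not Theorem~\ref{T: main} but its strengthened form, Theorem~\ref{T: scaling limit}, which is stated for arbitrary $F_{\text{in}}\in CDF(\overline{\mathbb{R}})$ and extended-real-valued solutions, so no adaptation of Appendix~\ref{A: monotone schemes} is required.
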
  
	
		\begin{proof}   The claimed value of $\sigma$ is computed in Appendix \ref{A: polylog} and is consistent with \cite{auffinger_cable}.  The limiting behavior of $D^{(N)}$ can be derived following the same strategy as in Corollary \ref{C: open or short circuit} above, once again using the fact that $\mathbb{P} \{ 0 < D^{(n)} < +\infty \} = q$ for any $n$ by Proposition \ref{P: properties of L}.  \end{proof}

\subsection{$\mathbb{Z}$-valued Solutions and Finite-Difference Equations}  \label{S: finite difference} This section discusses the subclass of RDE's described by \eqref{E: main RDE} and \eqref{E: main forcing} in the case when $f_{+}$ and $f_{-}$ take values in the set $\{0,f_{\mathbb{Z}}\}$, where $f_{\mathbb{Z}}(u) = (1 - u)_{+}$.  For instance, if the law of the data is chosen so that $f_{+} = f_{-} = f_{\mathbb{Z}}$ surely, then $X^{(n)}$ takes a jump only if $X^{(n-1)}_{1}$ and $X^{(n-1)}_{2}$ are at most distance one apart, and the size of the jump decays linearly with the distance.

What is nice about this particular case of the RDE is a $\mathbb{Z}$-valued RDE is embedded within it: If the initial data $X^{(0)}$ takes values in $\mathbb{Z}$, then $X^{(n)}$ remains integer-valued for all $n$ since $f_{\mathbb{Z}}$ maps integers to integers.  For this $\mathbb{Z}$-valued RDE, the one-step evolution is described by 
	\begin{align} \label{E: symmetrized cooperative}
		Y \overset{d}{=} \left\{ \begin{array}{r l}
							\max\{ X_{1}, X_{2} \} + \mathbf{1}_{\{0\}} ( | X_{2} - X_{1} | ), & \text{if} \, \, \Theta = 1, \\
							\min \{ X_{1}, X_{2} \} - \mathbf{1}_{\{0\}} ( |X_{2} - X_{1} |), & \text{if} \, \, \Theta = 0.
			\end{array} \right.
	\end{align}
The RDE associated with this equation provides a very natural distribution-dependent random walk on $\mathbb{Z}$, and it has the nice feature that the equations describing the CDF are nothing other than finite-difference discretizations of the quasilinear Fisher-KPP equations discussed in the introduction above.

For the rest of this subsection, assume that the joint law $\mathbf{P}$ of $(f_{+},f_{-})$ is concentrated on the set $\{0,f_{\mathbb{Z}}\} \times \{0,f_{\mathbb{Z}}\}$.  Notice that
	\begin{align*}
		f_{\mathbb{Z}} \in \mathcal{S} \quad \text{and} \quad g_{\mathbb{Z}}(s) = s \quad \text{for each} \, \, s \geq 1,
	\end{align*}
where, as in Section \ref{S: main results}, $g_{\mathbb{Z}}$ denotes the right-continuous, right-inverse of the function $u \mapsto u + f_{\mathbb{Z}}(u)$.  Thus, such a law $\mathbf{P}$ satisfies the assumptions of Theorems \ref{T: main diffusive} and \ref{T: main subdiffusive}.  In particular, as long as $(f_{+},f_{-})$ are not identically zero, either $n^{-\frac{1}{2}} X^{(n)}$ or $n^{-\frac{1}{3}} X^{(n)}$ converges to a nondegenerate limit.

 Let $q_{+} = \mathbf{P}\{f_{+} = f_{\mathbb{Z}}\}$ and $q_{-} = \mathbf{P}\{f_{-}=f_{\mathbb{Z}}\}$ in the discussion that follows. 

For $\mathbb{Z}$-valued solutions of the RDE, there is no loss restricting the CDF to the domain $\mathbb{Z}$.  By invoking the formula \eqref{E: definition of L}, one obtains, in this case,
	\begin{align} 
		F_{n} - F_{n-1} &= - p q_{+} (\nabla_{\mathbb{Z}}^{-} F_{n-1})^{2} + ( 1 - p ) q_{-} ( \nabla_{\mathbb{Z}}^{+} F_{n-1} )^{2} + (1 - 2p) F_{n-1} (1 - F_{n-1}), \label{E: more asymmetric form} 
	\end{align}
where $\nabla_{\mathbb{Z}}^{+}$ and $\nabla_{\mathbb{Z}}^{-}$ are the forward- and backward-difference operators
	\begin{align*}
		\nabla_{\mathbb{Z}}^{+} F(x) = F(x +1) - F(x), \quad \nabla_{\mathbb{Z}}^{-} F(x) = F(x) - F(x-1).
	\end{align*}
Using the identity $\nabla_{\mathbb{Z}}^{+} - \nabla_{\mathbb{Z}}^{-} = \Delta_{\mathbb{Z}}$ for $\Delta_{\mathbb{Z}} F(x) = ( F(x+1) + F(x-1) ) - F(x)$ and simplifying the difference of the two squares, this becomes
	\begin{align} \label{E: finite difference} 
		F_{n} - F_{n-1} &= a \nabla^{\text{sym}}_{\mathbb{Z}} F_{n-1} \Delta_{\mathbb{Z}} F_{n-1} + \sigma (\nabla_{\mathbb{Z}}^{+} F_{n-1} )^{2} + (1 - 2p) F_{n-1} (1 - F_{n-1}).
	\end{align}
where $\nabla^{\text{sym}}_{\mathbb{Z}} = 2^{-1} ( \nabla_{\mathbb{Z}}^{+} + \nabla_{\mathbb{Z}}^{-} )$, $\sigma = (1 - p) q_{-} - p q_{+}$, and $a = 2 p q_{+}$.  

Equation \eqref{E: finite difference} looks like a discrete analogue of the quasilinear Fisher-KPP equation \eqref{E: KPP} discussed in Section \ref{S: KPP}.  Notice that here $a$ or $\sigma$ may vanish, irrespective of whether or not $p = 1/2$.  Since replacing $\mathbb{Z}$ by $\mathbb{R}$ and finite-differences by derivatives does not seem likely to fundamentally change the large-scale asymptotic behavior, equation \eqref{E: finite difference} motivates the general study of \eqref{E: KPP} for arbitrary $\sigma \in \mathbb{R}$ and $a \geq 0$.

It is worth emphasizing that there is no contradiction between the formulae \eqref{E: finite difference} and \eqref{E: jump process} since the former involves the CDF of a $\mathbb{Z}$-valued random variable while the latter describes smooth CDF's.

\subsection{Hipster Random Walks and Cooperative Motion} Inspection of \eqref{E: finite difference} also reveals a connection with the symmetric hipster random walk and two-player cooperative motion, examples considered in \cite{addario-berry_cairns_devroye_kerriou_mitchell} and \cite{addario-berry_beckman_lin-symmetric}, respectively.  For the rest of this section, let $p = 1/2$.

To understand the connection, consider again the random variable $Z$ defined by $Z = \Theta \max\{X_{1},X_{2}\} + (1 - \Theta ) \min\{X_{1},X_{2}\}$.  Recall that $Z$ and $X_{1}$ have the same distribution since $p = 1/2$.  At the same time, in view of the fact that the two coincide when $X_{1} = X_{2}$, a stronger statement is true: $Z$ and $X_{1}$ have the same conditional laws on the event $\{X_{1} \neq X_{2}\}$.  This reasoning also applies if $X_{1}$ is replaced by the more symmetric $\Theta X_{1} + ( 1 - \Theta )X_{2}$.  From this, it follows that the law of the output $Y$ from \eqref{E: symmetrized cooperative} is unchanged if the equation is instead changed to 
	\begin{align} \label{E: symmetric hipster random walk}
		Y \overset{d}{=} \left\{ \begin{array}{r l}
						X_{1} + \mathbf{1}_{\{0\}} ( |X_{2} - X_{1}| ), & \text{if} \, \, \Theta = 1, \\
						X_{2} - \mathbf{1}_{\{0\}} ( |X_{2} - X_{1}|), & \text{if} \, \, \Theta = 0,
					\end{array} \right.
	\end{align}
or
	\begin{align} \label{E: symmetric cooperative motion}
		Y \overset{d}{=} \left\{ \begin{array}{r l}
						X_{1} + \mathbf{1}_{\{0\}} ( | X_{2} - X_{1} | ), & \text{if} \, \, \Theta = 1, \\
						X_{1} - \mathbf{1}_{\{0\}} ( |X_{2} - X_{1} |), & \text{if} \, \, \Theta = 0.
			\end{array} \right.
	\end{align}
These are precisely the one-step updates used to define the symmetric hipster random walk and two-player symmetric cooperative motion, respectively.  Thus, although it is not immediately obvious from inspection of \eqref{E: symmetric hipster random walk} or \eqref{E: symmetric cooperative motion}, those two RDE's belong to the class considered here.

In fact, by this reasoning, it is also possible to draw a connection with the other RDE considered in \cite{addario-berry_cairns_devroye_kerriou_mitchell}, called the totally asymmetric hipster random walk.  This is the RDE in $\mathbb{Z}$ with the one-step update determined by two independent Bernoulli random variables $(\Theta, \Xi)$ via the rule
	\begin{align*}
		Y \overset{d}{=} \left\{ \begin{array}{r l}
			X_{1} + \Xi \mathbf{1}_{\{0\}}(|X_{2} - X_{1}|), & \text{if} \, \, \Theta = 1, \\
			X_{2} + \Xi \mathbf{1}_{\{0\}}(|X_{2} - X_{1}|), & \text{if} \, \, \Theta = 0.
		\end{array} \right.		
	\end{align*} 
where $\mathbb{P}\{\Theta = 1\} = \frac{1}{2}$.  This is a class of RDE's parametrized by $q := \mathbb{P} \{ \Xi = 1\}$.  By the same reasoning employed above, as long as $q \leq \frac{1}{2}$, the totally asymmetric hipster random walk belongs to the subclass of RDE's considered in the previous subsection.  This is made precise in the next result.

	\begin{prop} Assume that $p = \frac{1}{2}$ and $\mathbf{P} \{ f_{+}, f_{-} \in \{0,f_{\mathbb{Z}}\} \} = 1$, and define $(q_{+},q_{-})$ by $q_{\rho} = \mathbf{P}\{f_{\rho} = f_{\mathbb{Z}}\}$.  If $\{ X^{(n)} \}$ is the solution of the RDE \eqref{E: main RDE} with forcing $\Phi$ given by \eqref{E: main forcing} for some integer-valued initial condition $X^{(0)}$, then, for each $n$, $X^{(n)}$ has the same law as (i) the symmetric hipster random walk with the same initial datum if $q_{+} = q_{-} = 1$ and (ii) the totally asymmetric hipster random walk with the same initial datum if $q_{-} = 0$ provided the parameter $q$ is set to $q = \frac{1}{2} q_{+}$.  \end{prop}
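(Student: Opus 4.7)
The plan is to proceed by induction on $n$. Since the one-step law-to-law updates of the RDE and of each hipster random walk are well defined on integer-valued inputs (both $f_\mathbb{Z}$ and the constant zero preserve $\mathbb{Z}$) and the initial data agree by hypothesis, it suffices to verify that these one-step updates coincide as maps on laws of $\mathbb{Z}$-valued random variables.

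The main tool is the joint-law identity $(Z,|X_2-X_1|) \overset{d}{=} (W,|X_2-X_1|)$, valid at $p = \tfrac{1}{2}$, where $Z = \Theta\max\{X_1,X_2\} + (1-\Theta)\min\{X_1,X_2\}$ and $W = \Theta X_1 + (1-\Theta)X_2$. This is immediate on $\{X_1 = X_2\}$, where both reduce to $X_1$; on $\{X_1 \neq X_2\}$ it follows from the exchangeability of $X_1, X_2$ and the independence of $\Theta$, since conditional on $(|X_2-X_1|, \min\{X_1,X_2\})$ both $Z$ and $W$ equal $\min\{X_1,X_2\}$ or $\max\{X_1,X_2\}$ each with probability $\tfrac{1}{2}$.

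For case (i), $f_+ = f_- = f_\mathbb{Z}$ and on integer inputs $f_\mathbb{Z}(|X_2-X_1|) = \mathbf{1}_{\{0\}}(|X_2-X_1|)$, so the RDE update reads $Y_{\text{RDE}} = Z + (2\Theta-1)\mathbf{1}_{\{0\}}(|X_2-X_1|)$, while the symmetric hipster update is $Y_{\text{hip}} = W + (2\Theta-1)\mathbf{1}_{\{0\}}(|X_2-X_1|)$. Splitting on $\{X_1 = X_2\}$ (where $Z = W = X_1$) and on $\{X_1 \neq X_2\}$ (where the indicator vanishes and the identity above applies) yields $Y_{\text{RDE}} \overset{d}{=} Y_{\text{hip}}$. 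For case (ii), decompose $f_+ = \Xi' f_\mathbb{Z}$ with $\Xi' \sim \text{Bernoulli}(q_+)$ independent of the rest, so that the RDE update becomes $Y_{\text{RDE}} = Z + \Theta\Xi'\mathbf{1}_{\{0\}}(|X_2-X_1|)$, while the totally asymmetric hipster update with parameter $q$ reads $Y_{\text{TA}} = W + \Xi\mathbf{1}_{\{0\}}(|X_2-X_1|)$ with $\Xi \sim \text{Bernoulli}(q)$. Since $\Theta\Xi' \sim \text{Bernoulli}(q_+/2)$ is independent of $(X_1,X_2)$, setting $q = q_+/2$ and splitting as before gives $Y_{\text{RDE}} \overset{d}{=} Y_{\text{TA}}$.

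The main subtle point is in case (ii): although $\Theta\Xi'$ is \emph{not} independent of $\Theta$ (it is forced to vanish on $\{\Theta = 0\}$), only its marginal Bernoulli law and its independence from $(X_1,X_2)$ enter the computation of the law of $Y_{\text{RDE}}$. This is because on $\{X_1 \neq X_2\}$ the indicator annihilates $\Theta\Xi'$ entirely, while on $\{X_1 = X_2\}$ one has $Z = X_1 = W$, so $Y_{\text{RDE}}$ reduces to $X_1 + \Theta\Xi'$, whose distribution is determined by the marginal law of $\Theta\Xi'$ alone, matching the corresponding expression $X_1 + \Xi$ for the totally asymmetric hipster random walk.
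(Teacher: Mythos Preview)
Your proof is correct and takes essentially the same approach as the paper, which does not give a formal proof but sketches the argument in the discussion preceding the proposition: the key observation (stated there) is that $Z$ and $\Theta X_1 + (1-\Theta)X_2$ coincide on $\{X_1 = X_2\}$ and have the same conditional law on $\{X_1 \neq X_2\}$, which is exactly your joint-law identity and split. Your treatment of case (ii), in particular the handling of $\Theta\Xi'$ and the observation that only its marginal law and independence from $(X_1,X_2)$ matter, makes rigorous a point the paper leaves implicit.
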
 
	
Note that the equation $q = \frac{1}{2} q_{+}$ introduces the constraint $q \leq \frac{1}{2}$, so that the previous result only covers the asymmetric hipster random walk for half of the possible values of $q$.  In fact, it is possible to show (by manipulation of the evolution equation satisfied by the CDF) that the asymmetric hipster random walk is a monotone RDE only if $q \leq 1/2$.  Therefore, it falls outside the class of RDE's considered in this paper when $q > 1/2$.

Similar arguments apply to the two-player version of the RDE's analyzed in \cite{addario-berry_beckman_lin-asymmetric}.  Specifically, two-player totally asymmetric, $q$-lazy cooperative motion is the RDE for $\mathbb{Z}$-valued random variables given by
	\begin{align*}
		X^{(n)} \overset{d}{=} \left\{ \begin{array}{r l}
								X_{1}^{(n-1)} + \mathbf{1}_{\{0\}} ( | X_{2}^{(n-1)} - X_{1}^{(n-1)} | ), & \text{with probability} \, \, r q, \\
								X_{1}^{(n-1)} - \mathbf{1}_{\{0\}} ( | X_{2}^{(n-1)} - X_{1}^{(n-1)} | ), & \text{with probability} \, \, (1 - r) q, \\
								X_{1}^{(n-1)}, & \text{with probability} \, \, 1 - q.
							\end{array} \right.
	\end{align*}  
The RDE is parameterized by the pair $(r,q)$.  The next result describes the connection with the RDE's considered here.  As for the asymmetric hipster random walk, only a subset of the parameter values are covered; the proof is left to the interested reader.

\begin{prop} Assume that $p = \frac{1}{2}$ and $\mathbf{P} \{ f_{+}, f_{-} \in \{0,f_{\mathbb{Z}}\} \} = 1$, and define $(q_{+},q_{-})$ by $q_{\rho} = \mathbb{P} \{ f_{\rho} = f_{\mathbb{Z}} \}$. If $\{ X^{(n)} \}$ is the solution of the RDE \eqref{E: main RDE} with forcing $\Phi$ given by \eqref{E: main forcing} for some integer-valued initial condition $X^{(0)}$, then, for each $n$, $X^{(n)}$ has the same law as (i) two-player symmetric cooperative motion with that initial condition if $q_{+} = q_{-} = 1$ or (ii) two-player asymmetric, $q$-lazy cooperative motion with that initial condition if $q_{+} \neq q_{-}$, provided the parameters $(r,q)$ are set to $q = \frac{1}{2} ( q_{+} + q_{-} )$ and $r = \frac{q_{+}}{q_{-} + q_{+}}$. \end{prop}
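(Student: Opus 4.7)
The plan is induction on $n$; since both the original RDE and the target RDE update by sampling i.i.d.\ copies of the current law, it suffices to prove the equality of one-step output distributions. That is, given i.i.d.\ integer-valued $X_1,X_2$ with common law $F$, I will show that $Y_{\text{ours}}$ defined by \eqref{E: main forcing} has the same law as $Y_{\text{target}}$ defined by the relevant target RDE.

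The key technical input is the identity observed in the paragraph preceding \eqref{E: symmetric hipster random walk}: at $p = \tfrac{1}{2}$, the random variable $Z = \Theta \max\{X_1,X_2\} + (1-\Theta)\min\{X_1,X_2\}$ has the same conditional distribution as $X_1$ on the event $\{X_1 \neq X_2\}$. For integer-valued $X_1,X_2$, this reduces to verifying
\begin{equation*}
\mathbb{P}(Z \leq z,\, X_1 \neq X_2) \;=\; \mathbb{P}(X_1 \leq z,\, X_1 \neq X_2)
\end{equation*}
for every $z \in \mathbb{Z}$. A direct computation at $p = \tfrac12$ using $\mathbb{P}(\max\{X_1,X_2\} \leq z) = F(z)^2$, $\mathbb{P}(\min\{X_1,X_2\} \leq z) = 1 - (1-F(z))^2$, and $\mathbb{P}(X_1 = X_2,\, X_1 \leq z) = \sum_{k \leq z} \mathbb{P}(X_1 = k)^2$ shows both sides equal $F(z) - \sum_{k \leq z} \mathbb{P}(X_1 = k)^2$.

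With this identity in hand, partition the sample space according to $\{X_1 = X_2\}$ versus $\{X_1 \neq X_2\}$ and compare. On $\{X_1 \neq X_2\}$ one has $f_{\mathbb{Z}}(|X_1 - X_2|) = 0$, so $Y_{\text{ours}} = Z$, while for both target RDEs $Y_{\text{target}} = X_1$; the identity above matches the conditional laws on this event. On $\{X_1 = X_2 = k\}$, enumerate the four combinations of $\Theta \in \{0,1\}$ and the relevant $f_{\rho} \in \{0,f_{\mathbb{Z}}\}$ to find that $Y_{\text{ours}} - k$ takes values $+1$, $0$, $-1$ with conditional probabilities $q_+/2$, $1 - (q_+ + q_-)/2$, and $q_-/2$, respectively. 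For case (i) with $q_+ = q_- = 1$, these reduce to $\pm 1$ each with probability $\tfrac{1}{2}$, matching \eqref{E: symmetric cooperative motion}. For case (ii), the choice $q = (q_+ + q_-)/2$ and $r = q_+/(q_+ + q_-)$ converts these probabilities exactly into $rq$, $1 - q$, $(1-r)q$, which are the transition probabilities of the totally asymmetric $q$-lazy cooperative motion on $\{X_1 = X_2\}$ (and on $\{X_1 \neq X_2\}$ the $q$-lazy update always outputs $X_1$, matching the previous paragraph).

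Combining the two pieces yields $Y_{\text{ours}} \overset{d}{=} Y_{\text{target}}$, and induction on $n$ finishes the proof. The only substantive step is the identity in law for $Z$ and $X_1$ conditional on $\{X_1 \neq X_2\}$; the remainder is a bookkeeping exercise comparing conditional probabilities, and the algebraic reparametrization $(q_+,q_-) \leftrightarrow (q,r)$ is dictated by matching the $\pm 1$ transition weights on $\{X_1 = X_2\}$.
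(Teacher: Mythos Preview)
Your proof is correct and follows essentially the same route as the paper: the paper does not give a formal proof of this proposition, but the argument it sketches in the preceding paragraphs --- reduce to the one-step map, use that $Z$ and $X_1$ have the same conditional law on $\{X_1 \neq X_2\}$ when $p=\tfrac12$, and match on $\{X_1 = X_2\}$ --- is exactly what you carry out. Your only deviation is that you verify the $Z \overset{d}{=} X_1$ conditional identity by a direct CDF computation, whereas the paper deduces it from the unconditional identity $Z \overset{d}{=} X_1$ together with the observation that $Z = X_1$ on $\{X_1 = X_2\}$; both are fine.
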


Notice that when $q_{+} \neq q_{-}$, the range of the map $(q_{+},q_{-}) \mapsto (r,q)$ is
	\begin{align} \label{E: parameter regime}
		\left\{ (r,q) \, \mid \, q \in [0,1], \, \, \left| r - \frac{1}{2} \right| \leq \frac{1}{2} ( q^{-1} - 1) \right\}.
	\end{align}
In general, this is a proper subset of the unit square.  However, for any fixed $q \leq \frac{1}{2}$, it covers all possible values of $r \in [0,1]$.

As in the case of hipster random walks, the restriction in \eqref{E: parameter regime} is fundamental.  Two-player totally asymmetric, $q$-lazy cooperative motion is monotone only for $(r,q)$ in the set given above; this can readily be proved via differentiation of the evolution equation satisfied by the CDF (see \cite[Equation (1.6)]{addario-berry_beckman_lin-asymmetric}). 

In view of the proposition above, the $\text{Beta}(2,2)$ limit theorems for the symmetric RDE's considered in \cite{addario-berry_cairns_devroye_kerriou_mitchell} and \cite{addario-berry_beckman_lin-symmetric} follow as a special case of this paper's main results.  The $\text{Beta}(2,1)$ limit theorems for asymmetric RDE's obtained in \cite{addario-berry_cairns_devroye_kerriou_mitchell} and \cite{addario-berry_beckman_lin-asymmetric} also follow, albeit only in the parameter regimes identified above.   Interestingly, in the asymmetric case, those works were able to prove limit theorems even in the nonmonotone regime.

%%%%%%%%%%%%%%%%%%%%%%%%%%%%%%%%%%%%%%%%%%%%%%%%%%%%%%%%%%%%%%%%%%%%%%%%%%%%%%%%%%%%%%%%%%%%%%%%%%%%%%%%%%

%%%%% SCALING LIMIT ( v12 )

%%%%%%%%%%%%%%%%%%%%%%%%%%%%%%%%%%%%%%%%%%%%%%%%%%%%%%%%%%%%%%%%%%%%%%%%%%%%%%%%%%%%%%%%%%%%%%%%%%%%%%%%%%

\section{Scaling Limit} \label{S: scaling limit}

This section proves the main results, Theorems \ref{T: main diffusive} and \ref{T: main subdiffusive}, as well as Corollary \ref{C: distributional limit}.	

Fix data $(f_{+},f_{-})$ satisfying the assumptions of Theorems \ref{T: main diffusive} or \ref{T: main subdiffusive}.  Given $\theta \in \mathbb{R}$, define the sequence $\{p^{(N)}\}$ by
	 \begin{equation*}
	 	p^{(N)} = \frac{1}{2} + \theta N^{-1}.
	\end{equation*}  
Let $T^{(N)}$ be the operator associated with the RDE \eqref{E: main RDE} with $\Phi$ given by \eqref{E: main forcing} and bias parameter $p = p^{(N)}$.  In view of the results of Section \ref{S: equation}, the equation $F^{(N)}_{n} = T^{(N)} F^{(N)}_{n-1}$ can be written in the form
	\begin{align} \label{E: scheme}
		F^{(N)}_{n} - F^{(N)}_{n-1} = \mathcal{L}^{(N)} F^{(N)}_{n-1} - 2 \theta N^{-1} F^{(N)}_{n-1} ( 1 - F^{(N)}_{n-1} )
	\end{align}
for some function $\mathcal{L}^{(N)}$ defined on $CDF(\overline{\mathbb{R}})$.  This equation, together with the properties of $\mathcal{L}^{(N)}$ obtained in Section \ref{S: equation}, motivates the study of scaling limits of certain sequences of monotone operators defined on $CDF(\overline{\mathbb{R}})$ in an abstract setting.

\subsection{Abstract Framework} \label{S: abstract framework} Let $\{T^{(N)}\}$ be a family of operators on $CDF(\overline{\mathbb{R}})$ with the following properties: 
	\begin{itemize}
		\item[(i)] \textit{Monotonicity:} If $F \leq G$ pointwise in $\mathbb{R}$, then $T^{(N)}F\leq T^{(N)}G$ also holds.
		\item[(ii)] \textit{Discrete-Time Reaction-Diffusion Equation:} There are functions $\{ \mathcal{L}^{(N)} \}$ defined on $CDF(\overline{\mathbb{R}})$ and continuous functions $\{ Q^{(N)} \}$ defined on $[0,1]$ such that
			\begin{align*}
				T^{(N)}F - F = \mathcal{L}^{(N)} F - Q^{(N)}(F).
			\end{align*} 
		\item[(iii)] \textit{Commutation with Constants:} Given $F,G \in CDF(\overline{\mathbb{R}})$, if the difference $F - G$ is a constant function, then $\mathcal{L}^{(N)} F = \mathcal{L}^{(N)} G$.
		\item[(iv)] \textit{Action at Infinity:} If $F \in CDF(\overline{\mathbb{R}})$, then, for each $\bar{x} \in \{+\infty,-\infty\}$,
			\begin{align*}
				( T^{(N)} F )( \bar{x} ) = F ( \bar{x} ) - Q^{(N)} ( F ( \bar{x} ) ).
			\end{align*}
	\end{itemize}
Notice that, as in the proof of Proposition \ref{P: properties of L} above, assumption (iv) implies that $\mathcal{L}^{(N)}$ vanishes on constant functions and, thus, $T^{(N)}$ maps constant functions to constant functions via the dynamical system
	\begin{align*}
		T^{(N)} q = q - Q^{(N)}(q) \quad \text{for each} \, \, q \in [0,1].
	\end{align*}
Since $T^{(N)}$ is monotone and maps $CDF(\overline{\mathbb{R}})$ to itself, it follows that the function $q \mapsto q - Q^{(N)}(q)$ is a nondecreasing map that sends $[0,1]$ into itself.

In addition to (i)-(iv) above, assume that there is a sequence of positive numbers $\{ \delta^{(N)} \}$ such that $\delta^{(N)} \to 0$ as $N \to +\infty$ and a continuous function $\mathcal{F} : \mathbb{R} \times \mathbb{R} \to \mathbb{R}$, which is nondecreasing in the second variable, for which the following property holds:
	\begin{itemize}
		\item[(v)] \textit{Consistency of $\mathcal{L}^{(N)}$:} If $G \in CDF(\overline{\mathbb{R}})$ is smooth with bounded, uniformly continuous second derivatives, rescaled according to the formula $G_{\delta}(y) = G(\delta y)$, then
			\begin{align*}
				\lim_{ N \to + \infty } N (\mathscr{L}^{(N)} G_{ \delta^{(N)} } ) \left( \frac{x}{\delta^{(N)}} \right) = \mathcal{F}( \partial_{x} G(x), \partial_{x}^{2} G(x)),
			\end{align*}
		where convergence holds uniformly with respect to $x \in \mathbb{R}$. 
	\end{itemize}
Since $\mathcal{L}^{(N)}$ vanishes on constant functions for any $N$, assumption (v) implies
	\begin{equation} \label{E: operator vanishes at zero}
		\mathcal{F}(0,0) = 0.
	\end{equation}
	
An analogous assumption will be made on the reaction term $Q^{(N)}$, as follows:
	\begin{itemize}
		\item[(vi)] \textit{ Consistency of $Q^{(N)}$: } There is a function $Q : [0,1] \to \mathbb{R}$ such that
			\begin{align*}
				\lim_{N \to \infty} \sup \left\{ | N Q^{(N)}(q) - Q(q) | \, \mid \, q \in [0,1] \right\} = 0
			\end{align*}
		Further, there is a constant $L > 0$ such that, independently of $N$,
			\begin{align*}
				| Q^{(N)}(q) - Q^{(N)}(q') | \leq L N^{-1} |q - q'| \quad \text{for each} \, \, q, q' \in [0,1].
			\end{align*}
	\end{itemize}

Under these assumptions, sequences of solutions $\{ F^{(N)}_{n} \}$ of the recursion $F^{(N)}_{n} = T^{(N)}F^{(N)}_{n-1}$ undergo a scaling limit, whereby their asymptotic behavior is determined by the PDE $\partial_{t} F = \mathcal{F}(\partial_{x} F, \partial_{x}^{2} F ) - Q(F)$.  This is made precise in the next theorem.

	\begin{theorem} \label{T: monotone scheme} Assume that $\{ T^{(N)} \}$ is a family of operators on $CDF(\overline{\mathbb{R}})$ satisfying assumptions (i)-(vi) above.  For each $N$, let $\{ F^{(N)}_{n} \}$ be a sequence in $CDF(\overline{\mathbb{R}})$ generated by the recursion $F_{n}^{(N)} = T^{(N)}F^{(N)}_{n-1}$.  If $F_{N} : \mathbb{R} \times [0,\infty) \to \mathbb{R}$ is the rescaled CDF defined by
		\begin{align*}
			F_{N}(x,t) = F^{(N)}_{[ N t]} \left( \frac{ x }{ \delta^{(N)} } \right),
		\end{align*}
	and if there is a continuous $F_{\text{in}} \in CDF(\overline{\mathbb{R}})$ such that, at time $t = 0$,
		\begin{equation*}
			F_{N}(\cdot,0) \to F_{\text{in}} \quad \text{vaguely as} \, \, N \to +\infty,
		\end{equation*} 
	then
		\begin{align*}
			F_{N} \to F \quad \text{locally uniformly in} \, \, \mathbb{R} \times [0, +\infty) \, \, \text{as} \, \, N \to +\infty,
		\end{align*}
	where $F$ is the unique bounded (continuous) viscosity solution of the equation
		\begin{equation} \label{E: initial value problem}
			\left\{ \begin{array}{r l}
				\partial_{t} F - \mathcal{F}( \partial_{x} F, \partial^{2}_{x} F ) + Q(F) = 0 & \text{in} \, \, \mathbb{R} \times (0, +\infty), \\
				F(x,0) = F_{\text{in}}(x).
			\end{array} \right.
		\end{equation}
	\end{theorem}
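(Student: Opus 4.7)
The strategy is to apply the half-relaxed limits method of Barles and Souganidis \cite{barles_souganidis}, with modifications to accommodate the fact that each $T_{\epsilon}$ is defined only on $CDF(\overline{\mathbb{R}})$. First, I would introduce
$$\overline{F}(x,t) = \limsup_{\epsilon \to 0, \, (y,s) \to (x,t)} F_{\epsilon}(y,s), \quad \underline{F}(x,t) = \liminf_{\epsilon \to 0, \, (y,s) \to (x,t)} F_{\epsilon}(y,s).$$
These are $[0,1]$-valued, respectively upper and lower semicontinuous, and satisfy $\underline{F} \leq \overline{F}$ by construction. Continuity of $F_{\text{in}}$ and the convergence hypothesis at $t = 0$ imply $\overline{F}(\cdot, 0) = \underline{F}(\cdot, 0) = F_{\text{in}}$. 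If I can show that $\overline{F}$ is a viscosity subsolution and $\underline{F}$ a viscosity supersolution of \eqref{E: initial value problem}, the comparison principle for $\mathcal{F}$ (cf.\ Appendix \ref{A: viscosity solutions}) forces $\overline{F} \leq \underline{F}$, so $F := \overline{F} = \underline{F}$ is the unique continuous viscosity solution; local uniform convergence then follows by the standard argument relating half-relaxed limits to continuous functions.

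The main task is to prove the subsolution inequality for $\overline{F}$; the supersolution statement is symmetric. Take a smooth test function $\varphi$ for which $\overline{F} - \varphi$ has a strict local maximum at $(x_0, t_0) \in \mathbb{R} \times (0, \infty)$. Standard arguments produce points $(x_\epsilon, t_\epsilon) \to (x_0, t_0)$ at which $F_\epsilon - \varphi$ achieves a local maximum in a fixed neighborhood, which after a vertical shift of $\varphi$ (permissible by property (ii)) may be normalized so that $F_\epsilon(x_\epsilon, t_\epsilon) = \varphi(x_\epsilon, t_\epsilon)$. Because $\overline{F}$ inherits monotonicity in $x$ from the $F_\epsilon$, one has $\partial_x \varphi(x_0, t_0) \geq 0$. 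From the recursion, $F_\epsilon(x_\epsilon, t_\epsilon) = T_\epsilon F_\epsilon(\cdot, t_\epsilon - \epsilon^\alpha)(x_\epsilon)$, and the aim is to combine monotonicity (i) and consistency (iv) to conclude
$$\varphi(x_\epsilon, t_\epsilon) \leq \varphi(x_\epsilon, t_\epsilon - \epsilon^\alpha) + \epsilon^\alpha \mathcal{F}\bigl(\partial_x \varphi(x_\epsilon, t_\epsilon - \epsilon^\alpha), \partial_x^2 \varphi(x_\epsilon, t_\epsilon - \epsilon^\alpha)\bigr) + o(\epsilon^\alpha);$$
after rearranging and sending $\epsilon \to 0$ this yields the required viscosity inequality $\partial_t \varphi(x_0, t_0) \leq \mathcal{F}(\partial_x \varphi(x_0, t_0), \partial_x^2 \varphi(x_0, t_0))$.

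The principal obstacle is that $T_\epsilon$ operates only on $CDF(\overline{\mathbb{R}})$, so monotonicity (i) cannot be invoked directly with $\varphi(\cdot, t_\epsilon - \epsilon^\alpha)$ in place of a CDF. One must construct a smooth $\Phi_\epsilon \in CDF(\overline{\mathbb{R}})$ that dominates $F_\epsilon(\cdot, t_\epsilon - \epsilon^\alpha)$ globally, agrees with $\varphi(\cdot, t_\epsilon - \epsilon^\alpha)$ on a neighborhood of $x_\epsilon$, and has bounded uniformly continuous second derivatives so that consistency (iv) applies, yielding $T_\epsilon \Phi_\epsilon(x_\epsilon) = \Phi_\epsilon(x_\epsilon) + \epsilon^\alpha \mathcal{F}(\partial_x \varphi, \partial_x^2 \varphi)|_{(x_\epsilon, t_\epsilon - \epsilon^\alpha)} + o(\epsilon^\alpha)$. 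Strictness of the local maximum of $\overline{F} - \varphi$ provides a neighborhood $B$ of $x_0$ and $\eta > 0$ with $F_\epsilon - \varphi \leq -\eta$ off $B$ for all small $\epsilon$, creating the slack needed for such a modification; since $\partial_x \varphi(x_0, t_0) \geq 0$, $\varphi$ is locally nondecreasing near $x_0$, and one can smoothly extend it outside $B$ to a nondecreasing function valued in $[0,1]$. Property (ii) then allows a vertical shift to guarantee the global domination $\Phi_\epsilon \geq F_\epsilon(\cdot, t_\epsilon - \epsilon^\alpha)$ while preserving the local consistency expansion. This construction, routine in the classical framework of bounded functions but delicate under the monotonicity and range constraints intrinsic to CDFs, is the principal content of the adaptation developed in Appendix \ref{A: monotone schemes}.
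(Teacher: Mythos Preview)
Your overall strategy matches the paper's, but there are two genuine gaps. First, the claim that ``continuity of $F_{\text{in}}$ and the convergence hypothesis at $t=0$ imply $\overline{F}(\cdot,0)=\underline{F}(\cdot,0)=F_{\text{in}}$'' is not justified. The half-relaxed limit at $(x,0)$ involves $F^\epsilon_n(y)$ with $\epsilon^\alpha n \to 0$, where $n$ may still tend to infinity; pointwise convergence of $F^\epsilon_0$ alone says nothing about such $n$. The paper closes this gap by first reducing (via monotonicity) to $F^\epsilon_0 = F_{\text{in}}$ with $F_{\text{in}}$ smooth, then using consistency (iv) to get $\|T_\epsilon F_{\text{in}} - F_{\text{in}}\|_{\sup} \leq C_0 \epsilon^\alpha$, and finally iterating via a contractivity estimate $\|T_\epsilon F - T_\epsilon G\|_{\sup} \leq \|F-G\|_{\sup}$ (itself a nontrivial consequence of (i)--(iii), proved separately) to obtain $\|F^\epsilon_n - F_{\text{in}}\|_{\sup} \leq C_0 \epsilon^\alpha n$. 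You omit this entirely.

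Second, the sentence ``since $\partial_x\varphi(x_0,t_0)\geq 0$, $\varphi$ is locally nondecreasing near $x_0$'' is false when $\partial_x\varphi(x_0,t_0)=0$: nothing prevents $\varphi(\cdot,t_0)$ from having a strict local maximum at $x_0$. The paper handles this by first invoking a reduction lemma (one need only check test functions with either $\partial_x\varphi\neq 0$ or $\partial_x\varphi=\partial_x^2\varphi=0$), and in the degenerate case constructing a separable replacement $\psi_1(x)+\psi_2(t)$ with $\psi_1$ nondecreasing via a fourth-order Taylor analysis exploiting $\partial_x^3\varphi(x_0,t_0)\geq 0$. Relatedly, your construction of a globally dominating $\Phi_\epsilon \in CDF(\overline{\mathbb{R}})$ is too vague: a vertical shift alone can push the test function above $1$. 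The paper's device is to pass from $F^\star$ to $G^\star=\max\{c_-,F^\star-\delta\}$, which creates uniform room at the top ($G^\epsilon_n \leq c_+-\delta$) so that the test function can be extended to a constant $>c_+-\delta$ on the right while remaining in $[0,1]$.
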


The results of the previous sections imply that if $\mathbf{P} \{ f_{+},f_{-} \in \mathcal{S} \} = 1$ and \eqref{E: decay one} holds, as is assumed in Theorems \ref{T: main diffusive} and \ref{T: main subdiffusive}, then the one-parameter family $\{ T^{(N)} \}$ defined in the discussion at the beginning of this section satisfies assumptions (i)-(iv); see Section \ref{S: deduce main theorems} for the details. In particular, $\{ Q^{(N)}\}$ is given by $Q^{(N)}(q) = 2 \theta N^{-1} q ( 1 - q )$, which trivially satisfies (vi).  It remains to verify condition (v).  That is the subject of the next proposition.  Recall that the constants $\sigma$ and $a$ are defined in Section \ref{S: main results} above.

	\begin{prop} \label{P: consistency condition} Assume that $\mathbf{P} \{ f_{+},f_{-} \in \mathcal{S} \} = 1$ and \eqref{E: decay one} holds, and fix $\theta \in \mathbb{R}$.  Define $p^{(N)} = 1/2 + \theta N^{-1}$.  Consider the operator $\mathscr{L}^{(N)}$ given by \eqref{E: definition of L} with $p = p^{(N)}$.  If $G : \mathbb{R} \to [0,1]$ is a nondecreasing function with bounded, uniformly continuous first and second derivatives, rescaled according to the rule $G_{\delta}(y) = G(\delta y)$, and if $R_{G}^{(N)} : \mathbb{R} \to \mathbb{R}$ is the remainder defined so that
		\begin{align*}
			( \mathscr{L}^{(N)} G_{N^{-\frac{1}{2}}} )( N^{\frac{1}{2}} x ) = N^{-1} \sigma | \partial_{x} G ( x ) |^{2} + N^{-1} R_{G}^{(N)}(x),
		\end{align*}
	then $R^{(N)}_{G} \to 0$ uniformly in $\mathbb{R}$ as $N \to +\infty$.
	
	Further, if the stronger assumption \eqref{E: decay two} holds,  and if $R^{(N)}_{G}$ is instead defined by
		\begin{align*}
			( \mathscr{L}^{(N)} G_{N^{-\frac{1}{3}}} )( N^{\frac{1}{3}} x ) = N^{- \frac{2}{3} } \sigma | \partial_{x} G ( x ) |^{2} + N^{ -1 } a | \partial_{x} G ( x ) | \partial_{x}^{2} G ( x ) + N^{ -1 } R_{G}^{(N)}(x),
		\end{align*}
	then, as before, $R^{(N)}_{G} \to 0$ uniformly as $N \to +\infty$.
	\end{prop}

The proof of the proposition is relegated to the end of this section (Section \ref{S: consistency}).

There is one last technicality to address, namely, whereas Theorem \ref{T: monotone scheme} asks for the continuity of the initial datum $F_{\text{in}}$, Theorems \ref{T: main diffusive} and \ref{T: main subdiffusive} allow for discontinuities.  This gap is bridged using the fact that the equations of interest here are well-posed even for discontinuous data.  This is made precise in the next condition:
	\begin{itemize}
		\item[(vii)] \textit{Asymptotically regularizing:} If $\underline{F}$ and $\overline{F}$ denote the maximal and minimal viscosity sub- and supersolutions, respectively, of the initial-value problem \eqref{E: initial value problem} for a given $F_{\text{in}} \in CDF(\overline{\mathbb{R}})$ (see Appendix \ref{A: comparison}), then 
			\begin{align*}
				\underline{F} = \overline{F} \quad \text{in} \, \, \mathbb{R} \times (0,+\infty).
			\end{align*}
	\end{itemize}  
As explained in Appendix \ref{A: viscosity solutions}, from the fact that $\underline{F} = \overline{F}$ in $\mathbb{R} \times (0,+\infty)$, it follows that there is a unique solution of \eqref{E: initial value problem} started from any initial datum $F_{\text{in}} \in CDF(\overline{\mathbb{R}})$, which is continuous for positive times.  
	
The next corollary asserts that if the family $\{T^{(N)}\}$ satisfies assumption (vii) in addition to (i)-(vi), then Theorem \ref{T: monotone scheme} remains true even if $F_{\text{in}}$ has discontinuities.

	\begin{corollary} \label{C: arbitrary convergence} Assume that $\{ T^{(N)}\}$ is a family of operators on $CDF(\overline{\mathbb{R}})$ satisfying assumptions (i)-(vii) above.  Then the statement of Theorem \ref{T: monotone scheme} remains true for an arbitrary $F_{\text{in}} \in CDF(\overline{\mathbb{R}})$ provided $F$ is taken to be the unique bounded discontinuous viscosity solution of the initial-value problem \eqref{E: initial value problem}.  \end{corollary}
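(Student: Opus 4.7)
The plan is to sandwich $F_{\text{in}}$ between continuous CDF's from above and below, apply Theorem \ref{T: monotone scheme} to each sandwiching family, and then collapse the sandwich using assumption (v) together with the standard stability of viscosity solutions under half-relaxed limits.

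First, for each $\delta > 0$, I would construct continuous CDF's $F^{+}_{\text{in},\delta}, F^{-}_{\text{in},\delta} \in CDF(\overline{\mathbb{R}})$ satisfying $F^{-}_{\text{in},\delta} + \delta \leq F_{\text{in}} \leq F^{+}_{\text{in},\delta} - \delta$ wherever the right-hand (resp.\ left-hand) side lies in $[0,1]$, together with $F^{\pm}_{\text{in},\delta} \to F_{\text{in}}$ at continuity points of $F_{\text{in}}$ as $\delta \to 0$; this can be arranged by translating by $\pm \delta$, adding or subtracting $\delta$ with truncation to $[0,1]$, and convolving with a mollifier of width $\delta$. For each pair $(\epsilon, \delta)$, set
\[
F^{\epsilon,+}_{0} := \max\bigl\{F^{\epsilon}_{0}, F^{+}_{\text{in},\delta}\bigr\}, \quad F^{\epsilon,-}_{0} := \min\bigl\{F^{\epsilon}_{0}, F^{-}_{\text{in},\delta}\bigr\}.
\]
The pointwise max/min of two CDF's is again a CDF, so $F^{\epsilon,\pm}_{0} \in CDF(\overline{\mathbb{R}})$ and $F^{\epsilon,-}_{0} \leq F^{\epsilon}_{0} \leq F^{\epsilon,+}_{0}$. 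Using the hypothesis that $F^{\epsilon}_{0} \to F_{\text{in}}$ pointwise at continuity points of $F_{\text{in}}$, combined with the right-continuity of $F_{\text{in}}$ and the monotonicity of $F^{\epsilon}_{0}$, one verifies that $F^{\epsilon,\pm}_{0} \to F^{\pm}_{\text{in},\delta}$ pointwise on $\mathbb{R}$; the strict gap of size $\delta$ in the construction is what secures this even at discontinuity points of $F_{\text{in}}$.

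Next, monotonicity (i) propagates the sandwich to every iteration, giving $T_{\epsilon}^{n} F^{\epsilon,-}_{0} \leq F^{\epsilon}_{n} \leq T_{\epsilon}^{n} F^{\epsilon,+}_{0}$ for all $n$. Since the initial data of the two sandwich families converge pointwise to \emph{continuous} CDF's, Theorem \ref{T: monotone scheme} applies to each and yields local uniform convergence of the corresponding rescaled functions to $F^{\pm}_{\delta}$, the unique bounded continuous viscosity solution of \eqref{E: initial value problem} with initial datum $F^{\pm}_{\text{in},\delta}$. Letting $\overline{U}$ and $\underline{U}$ be the upper and lower half-relaxed limits of $F_{\epsilon}$ as $\epsilon \to 0$, the sandwich passes to
\[
F^{-}_{\delta} \leq \underline{U} \leq \overline{U} \leq F^{+}_{\delta} \quad \text{in } \mathbb{R} \times (0, \infty), \, \, \text{for every } \delta > 0.
\]

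Finally, I would send $\delta \to 0$ and invoke (v). By stability of viscosity solutions under half-relaxed limits, $\limsup^{*}_{\delta \to 0} F^{+}_{\delta}$ is a bounded upper semicontinuous viscosity subsolution of the PDE whose initial trace is dominated by $F_{\text{in}}$, and symmetrically the lower limit is a supersolution whose trace dominates the lower semicontinuous envelope of $F_{\text{in}}$. Both are therefore squeezed between the maximal subsolution and the minimal supersolution of the IVP for $F_{\text{in}}$, which, by (v), coincide and equal the unique discontinuous viscosity solution $F$. The sandwich thus collapses to $\overline{U} = \underline{U} = F$, i.e., $F_{\epsilon} \to F$ locally uniformly on $\mathbb{R} \times (0, \infty)$. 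The main delicacy will be matching the notion of initial trace produced by the half-relaxed-limit construction to the notion used in assumption (v) (as formalized in Appendix \ref{A: comparison}); once this bookkeeping is in place, the argument reduces to a clean three-step sandwich powered by (i), Theorem \ref{T: monotone scheme}, and (v).
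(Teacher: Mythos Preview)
Your proposal is correct and follows essentially the same strategy as the paper: sandwich the discontinuous initial datum by continuous CDF's, apply Theorem \ref{T: monotone scheme} to the sandwiching families, and then collapse the sandwich via stability of viscosity solutions and assumption (v). The only cosmetic difference is that the paper builds the sandwich by one-sided mollification of the $\epsilon$-dependent data $F^{\epsilon}_{0}$ itself (with a kernel supported in $[1,2]$, scaled by $2^{N}$), which automatically yields a monotone-in-$N$ family and lets the initial-trace verification go through cleanly by freezing $N$; your max/min construction with a pre-built family $F^{\pm}_{\text{in},\delta}$ works just as well once you arrange $F^{\pm}_{\text{in},\delta}$ to be monotone in $\delta$, and you have correctly flagged the one delicate point (matching the initial trace of the half-relaxed limit to the sub/supersolution definition).
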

	
	See Appendix \ref{A: monotone schemes} for the proof.  
	
	It follows from classical results that, in the PDE's of interest here, assumption (vii) holds as long as either $\sigma \neq 0$ or $a > 0$.  (The degenerate PDE $\partial_{t} F + Q(F) = 0$ does \emph{not} have this property and therefore requires a slightly different treatment.)  This is made precise in the next proposition.
	
		\begin{prop} \label{P: uniqueness for the nonlinearities} Let $Q$ be an arbitrary function for which there is an $L > 0$ such that $|Q(q) - Q(q')| \leq L | q - q'|$ for each $q,q' \in [0,1]$ and satisfying both $Q(0) \leq 0$ and $Q(1) \geq 0$.  If $\mathcal{F}(v,w) = \sigma |v|^{2}$ for some $\sigma \neq 0$ or $\mathcal{F}(v,w) = a |v| w$ for some $a > 0$, then assumption (vii) is satisfied. \end{prop}
		
			\begin{proof} When $\theta = 0$, this is established in Propositions \ref{P: uniqueness HJ} and \ref{P: uniqueness PME} in Appendix \ref{A: viscosity solutions}.  The improvement to the case $\theta \neq 0$ is covered by Proposition \ref{P: unique with reaction}. \end{proof}
	
	\begin{remark} \label{R: left continuous} Theorem \ref{T: monotone scheme} is stated for semigroups defined in $CDF(\overline{\mathbb{R}})$, the space of nondecreasing, \emph{right-continuous} functions taking values in $[0,1]$.  There is nothing special about right-continuity, however.  In fact, let $F_{-}$ and $F^{+}$ denote the left- and right-continuous versions of a nondecreasing function $F$.  If operators $\{T^{(N)}_{-}\}$ are defined on the space $CDF_{\text{left}}(\overline{\mathbb{R}})$ of nondecreasing, left-continuous functions taking values in $[0,1]$ by the formula $T^{(N)}_{-}F = (T^{(N)}(F^{+}))_{-}$, then $\{T^{(N)}_{-}\}$ satisfies assumptions (i)-(vi) if and only if $\{T^{(N)}\}$ does.  Further, it is not hard to see that a PDE scaling limit of $\{T^{(N)}\}$ holds as in Theorem \ref{T: monotone scheme} if and only if the same conclusion applies to $\{T^{(N)}_{-}\}$.     \end{remark}
	
\subsection{Proof of Theorems \ref{T: main diffusive} and \ref{T: main subdiffusive}} \label{S: deduce main theorems} Both theorems are direct applications of Theorem \ref{T: monotone scheme}, or rather its corollary, Corollary \ref{C: arbitrary convergence}.  

As in the discussion at the beginning of this section, let $T^{(N)}$ be the operator associated with the RDE \eqref{E: main RDE} with data $(f_{+},f_{-})$ satisfying $\mathbf{P} \{ f_{+},f_{-} \in \mathcal{S} \} = 1$ and \eqref{E: decay one} and bias parameter $p^{(N)} = 1/2 + \theta N^{-1}$ for some fixed constant $\theta \in \mathbb{R}$.  Assumptions (i) and (ii) follow from Propositions \ref{P: monotone} and \ref{P: evolution eqn}, respectively.  

The assumptions (iii) and (iv) hold by Proposition \ref{P: properties of L}.  To see that this proposition is applicable, it only remains to check that $\mathbf{P} \{ f_{+}(+\infty) = f_{-}(+\infty) = 0 \} = 1$.  Fix $\rho \in \{+,-\}$.  Since $s - g_{\rho}(s) = f_{\rho} ( g_{\rho}(s) )$ for $s \geq f_{\rho}(0)$ and $g_{\rho}(s) \to + \infty$ as $s \to + \infty$, the integral $\int_{ f_{\rho}(0) }^{ + \infty} |g_{\rho}(s) - s| \, ds$ is finite only if $f_{\rho}(+\infty) = 0$.  At the same time, \eqref{E: decay one} implies that this integral is finite almost surely, hence $\mathbf{P} \{ f_{+}(+\infty) = f_{-}(+\infty) = 0\} = 1$ as claimed.  Thus, the hypotheses of Proposition \ref{P: properties of L} are satisfied, and it follows that assumptions (iii) and (iv) both apply to the sequence $\{T^{(N)}\}$.

In the context of Theorem \ref{T: main diffusive} (i.e., assuming both \eqref{E: decay one} and $\sigma \neq 0$), Proposition \ref{P: consistency condition} implies that assumption (v) holds with 
	\begin{align*}
		\delta^{(N)} = N^{- \frac{1}{2} } \quad \text{and} \quad \mathcal{F}(v,w) = \sigma | v |^{2}.
	\end{align*}
According to Proposition \ref{P: uniqueness for the nonlinearities} above, from the fact that $\sigma \neq 0$, assumption (vii) also holds.  Therefore, Corollary \ref{C: arbitrary convergence} applies, establishing convergence of the rescaled CDF $F_{N}$ to the solution of the initial-value problem associated to the PDE $\partial_{t} F - \sigma |\partial_{x} F|^{2} + 2 \theta F ( 1 - F ) = 0$, as desired.

Similarly, in Theorem \ref{T: main subdiffusive} (that is, assuming that \eqref{E: decay two} holds, $\sigma = 0$, and $a > 0$), Proposition \ref{P: consistency condition} instead establishes that assumption (v) holds with
	\begin{align*}
		\delta^{(N)} = N^{- \frac{1}{3} } \quad \text{and} \quad \mathcal{F}(v,w) = a |v| w.
	\end{align*}
Once again, because $a > 0$, Proposition \ref{P: uniqueness for the nonlinearities} asserts that assumption (vii) is satisfied.  Therefore, by Corollary \ref{C: arbitrary convergence}, the rescaled CDF $F_{N}$ instead converges to the solution of the initial-value problem associated to $\partial_{t} F - a |\partial_{x} F| \partial_{x}^{2} F + 2 \theta F ( 1 - F ) = 0$. \qed

\begin{remark} \label{R: extended real valued} Notice that, in the proof of Theorems \ref{T: main diffusive} and \ref{T: main subdiffusive} above, nowhere was it necessary to assume that $\{X^{(N,n)}\}$ was real-valued.  Therefore, since Theorem \ref{T: monotone scheme} and Corollary \ref{C: arbitrary convergence} in particular do not require that $F^{(N)}_{n}(+\infty) = 1$ or $F^{(N)}_{n}(-\infty) = 0$, the conclusions of the theorems remain true if $\{X^{(N,n)}\}$ are extended real-valued. \end{remark}
	
\subsection{Proof of Corollary \ref{C: distributional limit}} \label{S: proof of corollary} It only remains to reinterpret Theorems \ref{T: main diffusive} and \ref{T: main subdiffusive} in terms of the $\text{Beta}(2,1)$ and $\text{Beta}(2,2)$ distributions.  

In what follows, consider a solution $\{ X^{(n)} \}$ of the RDE \eqref{E: main RDE} with $\mathbf{P}\{ f_{+},f_{-} \in \mathcal{S} \} = 1$,  (constant) bias parameter $p = 1/2$, and started from an arbitrary real-valued initial datum $X^{(0)}$.  Recall that assumption \eqref{E: decay one} is in effect.  

\medskip

\noindent\textit{Case (i): Diffusive Asymptotics.}  Let $\{F_{n}\}$ be the sequence of CDF's associated with $\{ X^{(n)} \}$.  Define $F_{N}(x,t) = F_{[Nt]} ( N^{1/2} x )$ as in Theorem \ref{T: main diffusive}.  Since $X^{(0)}$ is real-valued,
	\begin{align*}
		\lim_{N \to \infty} F_{N}(x,0) = \lim_{N \to \infty} \mathbb{P} \{ N^{-1/2} X^{(0)} \leq x \} = \mathbf{1}_{[0,+\infty)}(x) \quad \text{for each} \, \, x \in \mathbb{R} \setminus \{0\}.
	\end{align*}

First, consider the case when $\sigma \neq 0$.  By Theorem \ref{T: main diffusive}, 
	\begin{align} \label{E: convergence}
		\lim_{N \to \infty} \mathbb{P} \{ N^{- \frac{1}{2} } X^{(N)} \leq x \} = F(x,1),
	\end{align}
where $F$ is the unique bounded discontinuous viscosity solution of \eqref{E: initial value problem} with initial datum $F_{\text{in}} = \mathbf{1}_{[0,+\infty)}$.  

If $\sigma < 0$, then $F$ is determined by the formula
	\begin{align*}
		F(x,t) = \frac{ x^{2} }{ 4 |\sigma| t } \quad \text{for each} \, \, x \in [0, 2 \sqrt{ |\sigma| } t^{1/2}],
	\end{align*}
as can be proved directly or by using the Hopf-Lax formula (see Proposition \ref{P: uniqueness HJ} below).  If $\sigma > 0$, then one can instead study the asymptotics of $-X^{(n)}$ reasoning as in Section \ref{S: symmetry}, which has the effect of flipping the sign of $\sigma$.  In any event, \eqref{E: convergence} implies
	\begin{align*}
		\lim_{N \to \infty} \mathbb{P} \{ - \text{sgn}(\sigma) (2 \sqrt{ |\sigma| } )^{-1}  N^{-\frac{1}{2}} X^{(N)} \leq x \} = F( 2 x \sqrt{ |\sigma| }, 1) = \left\{ \begin{array}{r l} 
							0, & \text{if} \, \, x < 0, \\
							x^{2}, & \text{if} \, \, 0 \leq x \leq 1, \\
							1, & \text{otherwise.}
						\end{array} \right.
	\end{align*}
The righthand side is precisely the CDF of the $\text{Beta}(2,1)$ distribution.  Therefore, $2^{-1} N^{-1/2} X^{(N)} \overset{d}{\to} -\text{sgn}(\sigma) \sqrt{ |\sigma| } \text{Beta}(2,1)$.

It only remains to consider the case when $\sigma = 0$.  Fix a continuous function $G_{\text{in}} \in CDF(\overline{\mathbb{R}})$ such that $G_{\text{in}}(x) \leq \mathbf{1}_{[0+\infty)}(x)$ for each $x \in \mathbb{R}$.  Consider the sequence $\{ Y^{(N,n)} \}$ obtained by solving the RDE \eqref{E: main RDE} with an $N$-dependent initial datum $Y^{(N,0)}$ chosen in such a way that, for every $x \in \mathbb{R}$,
	\begin{gather}
		\mathbb{P} \{ Y^{(N,0)} \leq x \} \leq \mathbb{P} \{ X^{(0)} \leq x \}  \quad \text{for each} \, \, N \in \mathbb{N}, \label{E: simple comparison} \\
		\lim_{N \to \infty} \mathbb{P} \{ N^{-1/2} Y^{(N,0)} \leq x \} = G_{\text{in}}(x). \nonumber
	\end{gather}
This is equivalent to asking that $Y^{(N,0)} \overset{d}{=} X^{(0)} + N^{1/2} Z^{(N)}$ for some sequence of nonnegative random variables $\{ Z^{(N)} \}$ with law converging to $G_{\text{in}}$.

Since $G_{\text{in}}$ is continuous, $\sigma = 0$, and $p = 1/2$ independently of $N$, applying Theorem \ref{T: monotone scheme} with $T^{(N)} = T$, $\delta^{(N)} = N^{-1/2}$, $\mathcal{F} = 0$, and $Q^{(N)} = Q = 0$ yields
	\begin{align*}
		\lim_{ N \to \infty } \mathbb{P} \{ N^{-1/2} Y^{(N,N)} \leq x \} = G(x,1),
	\end{align*}
where $G$ is the unique bounded viscosity solution of the equation $\partial_{t} G = 0$ in $\mathbb{R} \times (0,+\infty)$ with initial condition $G(x,0) = G_{\text{in}}(x)$.  This is just the time-independent function $G(x,t) = G_{\text{in}}(x)$.  In particular, by \eqref{E: simple comparison} and monotonicity (Proposition \ref{P: monotone}),
	\begin{align*}
		G_{\text{in}}(x) = \lim_{ N \to \infty } \mathbb{P} \{ N^{-1/2} Y^{(N,N)} \leq x \} \leq \liminf_{N \to \infty} \mathbb{P} \{ N^{-1/2} X^{(N)} \leq x \}.
	\end{align*}
Since $G_{\text{in}}$ was an arbitrary continuous CDF lying below $\mathbf{1}_{[0,+\infty)}$, this implies
	\begin{align*}
		\mathbf{1}_{[0,+\infty)}(x) \leq \liminf_{N \to \infty} \mathbb{P} \{ N^{-1/2} X^{(N)} \leq x \} \quad \text{for each} \, \, x \in \mathbb{R} \setminus \{0\}.
	\end{align*}
	
Approximating the step function $\mathbf{1}_{[0,+\infty)}$ this time from above rather than below, one similarly finds
	\begin{align*}
		 \limsup_{N \to \infty} \mathbb{P} \{ N^{-1/2} X^{(N)} \leq x \} \leq \mathbf{1}_{[0,+\infty)}(x) \quad \text{for each} \, \, x \in \mathbb{R} \setminus \{0\}.
	\end{align*}
Therefore, $N^{-1/2} X^{(N)} \overset{d}{\to} 0$ as $N \to \infty$, and case (i) is complete.

\medskip

\noindent\textit{Case (ii): Subdiffusive Asymptotics.}  Assume the stronger condition \eqref{E: decay one} and that $\sigma = 0$ and $a > 0$.  Since as in the previous step $N^{-1/3} X^{(0)} \to 0$ in distribution as $N \to +\infty$, Theorem \ref{T: main subdiffusive} implies that 
	\begin{align} \label{E: convergence subdiffusive case}
		\lim_{N \to \infty} \mathbb{P} \{ N^{- \frac{1}{3} } X^{(N)} \leq x \} = F(x,1)
	\end{align}
provided $F$ is the unique bounded discontinuous viscosity solution of \eqref{E: initial value problem} with initial datum $F_{\text{in}} = \mathbf{1}_{[0,\infty)}$.  By Proposition \ref{P: uniqueness PME} below, $F(x,t) = \int_{-\infty}^{x} \rho(y,t) \, dy$, where $\rho$ is the unique continuous distributional solution of the porous medium equation $\partial_{t} \rho = \frac{1}{2} a\partial_{x}^{2} ( \rho^{2} )$ with initial datum $\rho(0) = \delta_{0}$.  According to \cite{vazquez}, this solution, called the Barenblatt (or ZKB) solution, is given by
	\begin{align*}
		\rho(x,t) = \frac{1}{ (6 a t)^{ \frac{1}{3} } } \left( \left( \frac{3}{4} \right)^{ \frac{2}{3} } - \frac{ x^{2} }{ (6 a t)^{ \frac{2}{3} } } \right) \mathbf{1}_{[-(9at/2)^{\frac{1}{3}}, (9at/2)^{\frac{1}{3}} ]}(x).
	\end{align*}  
By \eqref{E: convergence subdiffusive case}, for any bounded continuous function $h$, if $Y^{(N)} = (36a)^{ - \frac{1}{3} } N^{-1/3}  X^{(N)} + \frac{1}{2}$, then
	\begin{align*}
		\lim_{N \to \infty} \mathbb{E} [ h(  Y^{(N)} ) ] &= \int_{-(9a/2)^{\frac{1}{3}}}^{(9a/2)^{\frac{1}{3}}} h( (36a)^{ - \frac{1}{3} } x + 2^{-1} ) \rho(x,1) \, dx = 6 \int_{0}^{1} h(y) y ( 1 - y ) \, dy,
	\end{align*}
and the integrand $6 y ( 1 - y ) \mathbf{1}_{[0,1]}(y)$ is nothing but the PDF of the $\text{Beta}(2,2)$ distribution.  This completes the proof in case (ii).

\medskip

\noindent\textit{Case (iii): Triviality when $\sigma = a = 0$.} Finally, suppose that \eqref{E: decay two} holds and $\sigma = a = 0$.  Since $g_{\pm}(s) \leq s$ holds automatically by definition, the identity $a = 0$ implies that $ (3s - g_{\rho}(s) )(s - g_{\rho}(s) ) = 0$ for all $s \geq 0$ and $\rho \in \{+,-\}$ $\mathbf{P}$-almost surely.  In particular,
	\begin{align*}
		\mathbf{P}\{ f_{+} = f_{-} = 0 \} = \mathbf{P} \{ g_{+}(s) = g_{-}(s) = s \, \, \text{for all} \, \, s \geq 0 \} = 1,
	\end{align*} 
and thus $X^{(n)} \overset{d}{=} (1 - \Theta) \max\{X^{(n-1)}_{1},X^{(n-1)}_{2}\} + \Theta \min \{ X^{(n-1)}_{1}, X^{(n-1)}_{2}\} \overset{d}{=} X^{(n-1)}$ for all $n$ (since $\mathbb{P} \{ \Theta = 1 \} = p = 1/2$).  In other words, $\{X^{(n)}\}$ is the constant sequence $X^{(n)} \overset{d}{=} X^{(0)}$, completing the proof in case (iii). \qed

%%%%%%%%%%%%%%%%%%%%%%%%%%%%%%%%%%%

\subsection{Proof of Proposition \ref{P: consistency condition}} \label{S: consistency} It will be convenient to start by fixing some notation.  Throughout this proof, define $h_{+}, h_{-} : [0,+\infty) \to [0,+\infty)$ by 
	\begin{align*}
		h_{\pm}(s) = s - g_{\pm}(s).
	\end{align*}
Notice that $0 \leq h_{\pm}(s) \leq s$ for any $ s \geq 0$ by the definition of $g_{\pm}$ (Section \ref{S: main results}). 

Define coefficients $\sigma_{+}$ and $\sigma_{-}$ by
	\begin{align} \label{E: definition of sigma plus}
		\sigma_{+} = - \int_{0}^{+\infty} \mathbf{E} h_{+}(s) \, ds, \quad \sigma_{-} = \int_{0}^{+\infty} \mathbf{E} h_{-}(s) \, ds,
	\end{align}
which are finite by \eqref{E: decay one}, so that $\sigma = \sigma_{+} + \sigma_{-}$.  In the case of the stronger assumption \eqref{E: decay two}, also define coefficients $a_{+}$ and $a_{-}$ by
	\begin{align*}
		a_{\pm} = \int_{0}^{+\infty} s \mathbf{E} h_{\pm}(s) \, ds + \frac{1}{2} \int_{0}^{+\infty} \mathbf{E} h_{\pm}(s)^{2} \,ds = \frac{1}{2} \int_{0}^{+\infty} \mathbf{E} ( 2s + h_{\pm}(s) ) h_{\pm}(s) \, ds.
	\end{align*}
By definition, $a = a_{+} + a_{-}$.

Assumption \eqref{E: decay one} says that $\int_{0}^{+\infty} \mathbf{E} h_{\pm}(s) \, ds < +\infty$, while assumption \eqref{E: decay two} says that $\int_{0}^{+\infty} (1 + s ) \mathbf{E} h_{\pm}(s) \, ds < +\infty$.  Notice that, in the latter case, the bound $0 \leq h_{\pm}(s) \leq s$ implies, in addition, that
	\begin{align} \label{E: key integral inequality}
		\int_{0}^{+\infty} \mathbf{E} h_{\pm}(s)^{2} \, ds \leq \int_{0}^{+\infty} s \mathbf{E} h_{\pm}(s) \, ds < +\infty.
	\end{align}
In particular, $a_{+}$ and $a_{-}$ are finite.

With these preliminaries out of the way, fix a $G \in CDF(\overline{\mathbb{R}})$ that has bounded, uniformly continuous first and second derivatives, and define $G_{\delta}(y) = G(\delta y)$ for $\delta > 0$.  Let $\omega_{G}$ be the modulus of continuity
	\begin{align*}
		\omega_{G}(\delta) = \sup \left\{ | \partial_{x} G(x) - \partial_{x} G(y) | + | \partial_{x}^{2} G(x) - \partial_{x}^{2} G(y) | \, \mid \, x,y \in \mathbb{R}, \, \, |x - y| \leq \delta \right\}.
	\end{align*}

Recall from Section \ref{S: equation} that it is possible to write 
	\begin{equation} \label{E: decomposition of L}
		\mathscr{L}^{(N)} = 2 p^{(N)} \mathscr{L}_{+} + 2 ( 1 - p^{(N)} ) \mathscr{L}_{-}
	\end{equation}
for some $N$-independent functions $\mathscr{L}_{+}$ and $\mathscr{L}_{-}$ defined on $CDF(\overline{\mathbb{R}})$.  Thus, the scaling behavior of $\mathscr{L}^{(N)}$ is determined by that of $\mathscr{L}_{+}$ and $\mathscr{L}_{-}$.  Further, since the assumption \eqref{E: decay one} implies that $\mathbf{P} \{ f_{+}(+\infty) = f_{-}(+\infty) = 0\} = 1$ (as proved in Section \ref{S: deduce main theorems} above), the formula \eqref{E: jump process} can be invoked to find
	\begin{align*}
		(\mathscr{L}_{+} G_{\delta}) ( \delta^{-1} x ) &= \delta \int_{0}^{+\infty} \mathbf{E} [ G ( x - \delta h_{+}(s) ) - G(x) ] \partial_{x} G ( x - \delta s ) \, ds, \\
		(\mathscr{L}_{-} G_{\delta}) ( \delta^{-1} x ) &= \delta \int_{0}^{+\infty} \mathbf{E} [ G ( x + \delta h_{-}(s) ) - G(x) ] \partial_{x} G ( x + \delta s ) \, ds.
	\end{align*}

In view of \eqref{E: decomposition of L}, it only remains to prove the following two claims: First, under assumption \eqref{E: decay one}, for any $\rho \in \{+,-\}$,
	\begin{align} \label{E: square asymptotics}
		\lim_{ \delta \downarrow 0 } \sup \left\{ \left| \delta^{-2} (\mathscr{L}_{\rho} G_{\delta}) ( \delta^{-1} x ) - \sigma_{\rho} |\partial_{x} G(x)|^{2} \right| \, \mid \, x \in \mathbb{R} \right\} = 0.
	\end{align}
Further, if assumption \eqref{E: decay two} holds, then
	\begin{align} \label{E: cubic asymptotics}
		\lim_{ \delta \downarrow 0 } \sup \left\{ \left| \delta^{-3} (\mathscr{L}_{\rho} G_{\delta}) ( \delta^{-1} x ) - \delta^{-1} \sigma_{\rho} |\partial_{x} G(x)|^{2} - a_{\rho} | \partial_{x} G(x) | \partial_{x}^{2} G(x) \right| \, \mid \, x \in \mathbb{R} \right\} = 0.
	\end{align}

\medskip

\noindent\textit{$\delta^{2}$ Asymptotics.} Assume \eqref{E: decay one} holds and fix a $\rho \in \{+,-\}$.  First, observe that, by the definition \eqref{E: definition of sigma plus} of $\sigma_{\rho}$,
	\begin{align}
		&\delta^{-2} ( \mathscr{L}_{\rho} G_{\delta} ) ( \delta^{-1} x ) - \sigma_{\rho} | \partial_{x} G ( x ) |^{2} \label{E: key identity integral} \\
		&= \mathbf{E} \int_{0}^{+\infty} \left( \frac{1}{\delta h_{\rho}(s)} \{ G ( x - \rho \delta  h_{\rho}(s) ) - G(x) \}  \partial_{x} G(x -  \rho \delta s) + \rho |\partial_{x} G(x)|^{2} \right) h_{\rho}(s) \, ds. \nonumber
	\end{align}
By the assumptions on $G$ and the bound $h_{\rho}(s) \leq s$,
	\begin{align*}
		&\left| \mathbf{E} \int_{0}^{+\infty} \left( \frac{1}{\delta h_{\rho}(s)} \{ G ( x - \rho \delta h_{\rho}(s) ) - G(x) \}  \partial_{x} G(x - \rho \delta s) + \rho |\partial_{x} G(x)|^{2} \right) h_{\rho}(s) \, ds \right| \\
		&\qquad \leq  \| \partial_{x} G \|_{\text{sup}}  \mathbf{E} \int_{0}^{+\infty} \left( \omega_{G} ( \delta h_{\rho}(s) ) + \omega_{G}( \delta s) \right)  h_{\rho}(s) \, ds \\
		&\qquad \leq 2  \| \partial_{x} G \|_{\text{sup}} \int_{0}^{+\infty} \omega_{G}(\delta s) \mathbf{E} h_{\rho}(s) \, ds
	\end{align*}
Since assumption \eqref{E: decay one} implies $\mathbf{E}h_{\rho}$ is integrable, Lebesgue's dominated convergence theorem implies that the righthand side vanishes as $\delta \downarrow 0$.  Therefore, \eqref{E: square asymptotics} follows.

\medskip

\noindent\textit{$\delta^{3}$ Asymptotics.} Finally, assume that assumption \eqref{E: decay two} holds.  In this case, notice that the main error term from the previous step can be rewritten in the form 
	\begin{align*}
		&\mathbf{E} \int_{0}^{+\infty} \left( \frac{1}{\delta h_{\rho}(s)} \{ G ( x - \rho \delta h_{\rho}(s) ) - G(x) \}  \partial_{x} G(x - \rho \delta s) + \rho |\partial_{x} G(x)|^{2} \right) h_{\rho}(s) \, ds \\
		&\quad =  \rho \partial_{x} G(x) \mathbf{E} \int_{0}^{+\infty} ( \partial_{x} G(x) - \partial_{x} G(x - \rho \delta s) ) h_{\rho}(s) \, ds \\
		&\quad \quad + \mathbf{E} \int_{0}^{+ \infty} \left( \frac{1}{\delta h_{\rho}(s)} \{ G ( x - \rho \delta h_{\rho}(s) )  - G(x) \} + \rho \partial_{x} G(x)  \right) \partial_{x} G(x - \rho \delta s) h_{\rho}(s) \, ds.
	\end{align*}
In the first term, one finds
	\begin{align*}
		&\left|  \frac{ \rho }{ \delta} \mathbf{E} \int_{0}^{+\infty} \partial_{x} G(x) ( \partial_{x} G(x) - \partial_{x} G(x - \rho \delta s) ) h_{\rho}(s) \, ds - \partial_{x} G(x) \partial_{x}^{2} G(x) \int_{0}^{+\infty} s \mathbf{E} h_{\rho}(s) \, ds \right| \\
		&\qquad \leq \| \partial_{x} G \|_{\text{sup}} \int_{0}^{+\infty} \omega_{G}(\delta s) s \mathbf{E} h_{\rho}(s) \, ds.
	\end{align*}
Similarly, in the second term, again using $\omega_{G}(\delta h_{\rho}(s)) \leq \omega_{G}(\delta s)$,
	\begin{align*}
		&\left| \frac{1}{\delta} \mathbf{E} \int_{0}^{+ \infty} \left( \frac{1}{\delta h_{\rho}(s)} \{ G ( x - \rho \delta h_{\rho}(s) ) - G(x) \} + \rho \partial_{x} G(x)  \right) \partial_{x} G(x - \rho \delta s) h_{\rho}(s) \, ds \right.\\
		&\qquad \left. - \frac{1}{2} \partial_{x} G(x) \partial_{x}^{2} G(x) \int_{0}^{+\infty} \mathbf{E} h_{\rho}(s)^{2} \, ds \right| \\
		&\qquad \leq \frac{1}{2} \left( \|\partial_{x} G\|_{\text{sup}} + \| \partial_{x}^{2} G \|_{\text{sup}} \right) \int_{0}^{+\infty} \omega_{G}(\delta s) \mathbf{E} h_{\rho}(s)^{2} \, ds.
	\end{align*}
Combining these estimates with the identity \eqref{E: key identity integral} and estimate \eqref{E: key integral inequality}, one finds
	\begin{align*}
		& \sup \left\{ \left| \delta^{-3} ( \mathscr{L}_{\rho} G_{\delta} ) ( \delta^{-1} x ) - \delta^{-1} \sigma_{\rho} | \partial_{x} G ( x ) |^{2} - a_{\rho} |\partial_{x} G(x)| \partial_{x}^{2} G(x) \right| \, \mid \, x \in \mathbb{R} \right\} \\
		&\qquad \leq \left( 1 + \|\partial_{x} G\|_{\text{sup}} + \| \partial_{x}^{2} G \|_{\text{sup}} \right) \int_{0}^{+\infty} \omega_{G}(\delta s) s \mathbf{E} h_{\rho}(s) \, ds.
	\end{align*}
Since the righthand side vanishes as $\delta \downarrow 0$ by dominated convergence, \eqref{E: cubic asymptotics} follows. \qed

\subsection{Counterexample} \label{S: counterexample} Here is a counterexample, showing that $N^{-1/2} X^{(N)}$ can blow up if \eqref{E: decay one} fails to hold.  Consider the law $\mathbf{P}$ under which $(f_{+},f_{-})$ are given by
	\begin{align*}
		f_{+}(u) = ( Z - u )_{+}, \quad f_{-}(u) = 0,
	\end{align*}
where $Z$ is a nonnegative random variable such that $\mathbb{E} Z^{2} = + \infty$.  With this choice of forcing, the RDE \eqref{E: main RDE} with $p = 1/2$ is superdiffusive in the sense that if $\{ X^{(n)} \}$ is a solution started from any real-valued initial datum, then
	\begin{align*}
		N^{-\frac{1}{2}} X^{(N)} \overset{d}{\to} + \infty \quad \text{as} \, \, N \to + \infty.
	\end{align*}

To see this, it is convenient to define $f^{(m)}_{+}$ for $m \in \mathbb{N}$ by 
	\begin{align*}
		f^{(m)}_{+}(u) = ( \max\{ Z, m \} - u )_{+}.
	\end{align*}
Let $\{X^{(m,n)}\}$ be a solution of the RDE \eqref{E: main RDE} started from the same initial distribution as $\{X^{(n)}\}$, but with the forcing $(f_{+},0)$ replaced by $(f_{+}^{(m)},0)$.  Note that the function $g_{+}^{(m)}$ associated to $f_{+}^{(m)}$ via \eqref{E: right inverse} is given by 
	\begin{align*}
		g_{+}^{(m)}(s) = 0 \quad \text{if} \, \, s < \max\{Z,m\}, \quad g_{+}^{(m)}(s) = s, \quad \text{otherwise,}
	\end{align*}
and the function $g_{-}$ associated to $f_{-} = 0$ is $g_{-}(s) = s$.  Therefore, the RDE with $(f^{(m)}_{+},0)$ satisfies \eqref{E: decay one} and the constant $\sigma^{(m)}$ determining the asymptotics is given by
	\begin{align*}
		\sigma^{(m)} = - \mathbf{E} \int_{0}^{ \max\{Z, m \} } s \, ds = - \frac{1}{2} \mathbf{E} \max\{Z^{2}, m^{2} \}.
	\end{align*}
Note, in particular, that $| \sigma^{(m)} | \to + \infty$ as $m \to + \infty$ by the choice of $Z$.

Since $f_{+} \geq f^{(m)}_{+}$, monotonicity with respect to the data $f_{+}$ (Proposition \ref{P: monotone in data}) implies that, for any $x \in \mathbb{R}$,
	\begin{align*}
		\mathbb{P} \{ N^{-1/2} X^{(N)} \leq x \} \leq \mathbb{P} \{ N^{-1/2} X^{(m,N)} \leq x\}.
	\end{align*}
Therefore, by Corollary \ref{C: distributional limit}, 
	\begin{align*}
		\limsup_{N \to \infty} \mathbb{P} \{ N^{-1/2} X^{(m,N)} \leq x \} \leq F_{\text{Beta}(2,1)} ( 2^{-1} | \sigma^{(m)} |^{- \frac{1}{2} }  x  ).
	\end{align*}
where $F_{\text{Beta}(2,1)}$ is the CDF of the $\text{Beta}(2,1)$ random variable.  Sending $m \to +\infty$, this implies
	\begin{align*}
		\limsup_{ N \to \infty } \mathbb{P} \{ N^{-1/2} X^{(N)} \leq x \} \leq F_{\text{Beta}(2,1)}(0) = 0 \quad \text{for each} \, \, x \in \mathbb{R}.
	\end{align*}
%%%%%%%%%%%%%%%%%%%%%

%%%%%%%%%%%%%%%%%%%%%%%%%%%%%%%%%%%%%%%%%%%%%%%%%%%%%%%%%%%%%%%%%%%%%%%%%%%%%%%%%%%%%%%%%%%%%%%%%%%%%%%%%%

%%%%% APPENDIX
\appendix

%%%%%%%%%%%%%%%%%%%%%%%%%%%%%%%%%%%%%%%%%%%%%%%%%%%%%%%%%%%%%%%%%%%%%%%%%%%%%%%%%%%%%%%%%%%%%%%%%%%%%%%%%%

%%%%% VISCOSITY SOLUTIONS ( v13 )

%%%%%%%%%%%%%%%%%%%%%%%%%%%%%%%%%%%%%%%%%%%%%%%%%%%%%%%%%%%%%%%%%%%%%%%%%%%%%%%%%%%%%%%%%%%%%%%%%%%%%%%%%%

\section{Viscosity Solutions} \label{A: viscosity solutions}

This section reviews relevant definitions and results from the theory of viscosity solutions.  General references for this material are \cite{user,barles}.  The interest here is in the initial-value problem
	\begin{align} \label{E: initial value problem appendix}
		\left\{ \begin{array}{r l}
			\partial_{t} F - \mathcal{F} ( \partial_{x} F, \partial^{2}_{x} F ) + Q(F) = 0 & \text{in} \, \, \mathbb{R} \times (0,\infty), \\
			F(x,0) = F_{\text{in}}(x),
		\end{array} \right.
	\end{align}
where $F_{\text{in}} \in CDF(\overline{\mathbb{R}})$ and the diffusion term $\mathcal{F} : \mathbb{R} \times \mathbb{R} \to \mathbb{R}$ and the reaction term $Q : \mathbb{R} \to \mathbb{R}$ are continuous functions, which satisfy the assumptions, for some $L \geq 0$,
	\begin{gather}
		\mathcal{F}( v, w_{1} ) \leq \mathcal{F} ( v, w_{2} ) \quad \text{for each} \, \, v,w_{1},w_{2} \in \mathbb{R} \, \, \text{with} \, \, w_{1} < w_{2}, \label{A: elliptic} \\
		|Q ( q ) - Q( q' ) | \leq L | q - q' | \, \, \text{for each} \, \, q,q' \in \mathbb{R}, \label{A: lipschitz} \\
		\mathcal{F}(0,0) = 0, \quad Q(0) \leq 0, \quad \text{and} \quad Q(1) \geq 0. \label{A: wellposed}
	\end{gather}
Since the problem at hand naturally leads to consideration of possibly discontinuous initial data, some care will be needed in defining precisely what is meant by a solution, subsolution, or supersolution.

The next definition recalls what it means for a function to satisfy a partial differential equation in the viscosity sense.  

	\begin{definition} An upper semicontinuous function $F : \mathbb{R} \times (0,+\infty) \to \mathbb{R}$ is said to satisfy the differential inequality $\partial_{t} F - \mathcal{F} ( \partial_{x} F, \partial_{x}^{2} F ) + Q(F) \leq 0$ in the viscosity sense in $\mathbb{R} \times (0,+\infty)$ if it has the following property: If $\varphi$ is a smooth function defined in some open set $U \subseteq \mathbb{R} \times (0,+\infty)$ and if there is a point $(x_{0},t_{0}) \in U$ at which $\varphi$ touches $F$ from above (that is, $F(x,t) \leq \varphi(x,t)$ for all $(x,t) \in U$ with equality when $(x,t) = (x_{0},t_{0})$), then
		\begin{align*}
			\partial_{t}\varphi(x_{0},t_{0}) - \mathcal{F} ( \partial_{x} \varphi(x_{0},t_{0}), \partial_{x}^{2} \varphi(x_{0},t_{0}) ) + Q( \varphi(x_{0},t_{0}) ) \leq 0.
		\end{align*}

	A lower semicontinuous function $G : \mathbb{R} \times (0,+\infty) \to \mathbb{R}$ is said to satisfy the differential inequality $\partial_{t} G - \mathcal{F} ( \partial_{x} G, \partial_{x}^{2} G ) + Q(G) \geq 0$ in the viscosity sense in $\mathbb{R} \times (0,+\infty)$ if it has the following property: If $\varphi$ is a smooth function defined in some open set $U \subseteq \mathbb{R} \times (0,+\infty)$ and if there is a point $(x_{0},t_{0}) \in U$ at which $\varphi$ touches $F$ from below (that is, $F(x,t) \geq \varphi(x,t)$ for all $(x,t) \in U$ with equality when $(x,t) = (x_{0},t_{0})$), then
		\begin{align*}
			\partial_{t} \varphi(x_{0},t_{0}) - \mathcal{F} ( \partial_{x} \varphi(x_{0},t_{0}), \partial_{x}^{2}\varphi(x_{0},t_{0}) ) + Q( \varphi(x_{0},t_{0}) ) \geq 0.
		\end{align*}
		
	A continuous function $F : \mathbb{R} \times (0,+\infty) \to \mathbb{R}$ is said to satisfy the differential equation $\partial_{t} F - \mathcal{F} ( \partial_{x} F, \partial^{2}_{x} F ) + Q(F) = 0$ in the viscosity sense in $\mathbb{R} \times (0,+\infty)$ if it satisfies both $\partial_{t} F - \mathcal{F} ( \partial_{x} F, \partial_{x}^{2} F ) + Q(F) \leq 0$ and $\partial_{t} F - \mathcal{F} ( \partial_{x} F, \partial^{2}_{x} F ) + Q(F) \geq 0$ in the viscosity sense in $\mathbb{R} \times (0,+\infty)$. 
	\end{definition}
	
The next definition specifies what is meant by a solution, subsolution, or supersolution of the initial-value problem \eqref{E: initial value problem appendix}.  

	\begin{definition} An upper semicontinuous function $F : \mathbb{R} \times [0,+\infty) \to \mathbb{R}$ is said to be a \emph{viscosity subsolution} of \eqref{E: initial value problem appendix} if it satisfies $\partial_{t} F - \mathcal{F} ( \partial_{x} F, \partial_{x}^{2} F ) +Q(F)  \leq 0$ in the viscosity sense in $\mathbb{R} \times (0,+\infty)$ and 
		\begin{align} \label{E: initial condition thing}
			\lim_{\delta \downarrow 0} \sup \left\{ F(y,0) - F_{\text{in}}(x) \, \mid \, x,y \in \mathbb{R} \, \, \text{such that} \, \, |y - x| \leq \delta \right\} \leq 0.
		\end{align}
	
	A lower semicontinuous function $G : \mathbb{R} \times [0,+\infty) \to \mathbb{R}$ is said to be a \emph{viscosity supersolution} of \eqref{E: initial value problem appendix} if it satisfies $\partial_{t} G - \mathcal{F} ( \partial_{x} G, \partial^{2}_{x} G ) + Q(G) \geq 0$ in the viscosity sense in $\mathbb{R} \times (0,+\infty)$ and 
		\begin{align*}
			\lim_{\delta \downarrow 0} \inf \left\{ G(y,0) - F_{\text{in}}(x) \, \mid \, x,y \in \mathbb{R} \, \, \text{such that} \, \, |y - x| \leq \delta \right\} \geq 0.
		\end{align*}
	
	A continuous function $F : \mathbb{R} \times [0,+\infty) \to \mathbb{R}$ is said to be a \emph{(continuous) viscosity solution} of \eqref{E: initial value problem appendix} if it is both a viscosity subsolution and a viscosity supersolution. 
	\end{definition}

Since the main results of this work, Theorems \ref{T: main diffusive} and \ref{T: main subdiffusive}, consider the problem \eqref{E: initial value problem appendix} with initial data that may be discontinuous, a different definition of solution is needed.  In order to state it, it is necessary to recall the notions of \emph{upper} and \emph{lower semicontinuous envelopes.}  Specifically, for a function $F : \mathbb{R} \times [0,+\infty) \to \mathbb{R}$, the upper and lower semicontinuous envelopes $F^{*}$ and $F_{*}$ are the functions defined in $\mathbb{R} \times [0,+\infty)$ via the formulae
	\begin{align*}
		F^{*}(x,t) = \lim_{\delta \downarrow 0} \sup \left\{ F(y,s) \, \mid \, y,s \in \mathbb{R} \times [0,\infty) \, \, \text{such that} \, \, |x - y| + |t - s| \leq \delta \right\}, \\
		F_{*}(x,t) = \lim_{\delta \downarrow 0} \inf \left\{ F(y,s) \, \mid \, y,s \in \mathbb{R} \times [0,\infty) \, \, \text{such that} \, \, |x - y| + |t - s| \leq \delta \right\}.
	\end{align*}
Notice that $F^{*}$ and $F_{*}$ are always upper and lower semicontinuous, respectively.

	\begin{definition} A function $F : \mathbb{R} \times [0,+\infty) \to \mathbb{R}$ is said to be a \emph{discontinuous viscosity solution of \eqref{E: initial value problem appendix}} if $\partial_{t} F^{*} - \mathcal{F}( \partial_{x} F^{*}, \partial_{x}^{2} F^{*} ) +Q(F^{*}) \leq 0$ in the viscosity sense in $\mathbb{R} \times (0,+\infty)$; $\partial_{t} F_{*} - \mathcal{F}( \partial_{x} F_{*}, \partial_{x}^{2} F_{*} ) + Q(F_{*}) \geq 0$ in the viscosity sense in $\mathbb{R} \times (0,+\infty)$; and $F^{*}(\cdot,0) \leq (F_{\text{in}})^{*}$ and $F_{*}(\cdot,0) \geq (F_{\text{in}})_{*}$ in $\mathbb{R}$. \end{definition}

\subsection{Comparison Principle} \label{A: comparison} This subsection begins by recalling the comparison principle for bounded sub- and supersolutions of the initial-value problem \eqref{E: initial value problem appendix}, which is standard.  An existence and uniqueness result is then stated in terms of the maximal subsolution and minimal supersolution.

	\begin{theorem}[Comparison Principle] \label{T: comparison} Assume that $\mathcal{F}$ and $Q$ satisfy assumptions \eqref{A: elliptic} and \eqref{A: lipschitz}.  If $F, G : \mathbb{R} \times [0,+\infty) \to \mathbb{R}$ are respectively a bounded upper and a bounded lower semicontinuous function such that $\partial_{t} F - \mathcal{F}( \partial_{x} F, \partial_{x}^{2} F ) + Q(F) \leq 0$ and $\partial_{t} G - \mathcal{F} ( \partial_{x} G, \partial^{2}_{x} G ) + Q(G) \geq 0$ in the viscosity sense in $\mathbb{R} \times (0,+\infty)$ and if $M$ is the constant defined by
		\begin{align*}
			M = \lim_{\delta \downarrow 0} \sup \left\{ ( F(x,0) - G(y,0) )_{+} \, \mid \, x, y \in \mathbb{R} \, \, \text{such that} \, \, |x - y| \leq \delta \right\},
		\end{align*}
	then, for any $t > 0$,
		\begin{align} \label{E: lipschitz bound}
			\sup \left\{ F(x,t) - G(x,t) \, \mid \, x \in \mathbb{R} \right\} \leq M e^{ L t}.
		\end{align}
	
	In particular, if $F$ and $G$ are respectively bounded viscosity sub- and supersolutions of \eqref{E: initial value problem appendix} for some fixed initial datum $F_{\text{in}}$, then $F \leq G$ pointwise in $\mathbb{R} \times [0,+\infty)$. \end{theorem}
	
		\begin{proof} The reader can find a proof of this version of the theorem with $L = 0$, $M= 0$ in \cite{giga_goto_ishii_sato}.  The extension to the case $L > 0$, $M = 0$ is routine (see also \cite[Section 5.2]{barles}).  The case $L > 0$, $M > 0$ can be reduced to the case $L > 0$, $M = 0$ using an analogue of the argument appearing in the proof of Lemma \ref{L: pulling down} below, namely, using that the function $\tilde{F} = F - M e^{Lt}$ satisfies $\partial_{t} \tilde{F} - \mathcal{F} ( \partial_{x} \tilde{F}, \partial_{x}^{2} \tilde{F} ) + Q ( \tilde{F} ) \leq 0$ in $\mathbb{R} \times (0,+\infty)$. \end{proof}
		
		The comparison principle implies, in particular, that there is at most one viscosity solution of \eqref{E: initial value problem appendix} if $F_{\text{in}} \in CDF(\overline{\mathbb{R}})$ is continuous.  It is known that, indeed, a solution exists in this case, and the assumption \eqref{A: wellposed} ensures that, for each $t > 0$, $F(\cdot,t) \in CDF(\overline{\mathbb{R}})$.
		
		\begin{prop} \label{P: perron} Assume that $\mathcal{F}$ and $Q$ satisfy \eqref{A: elliptic}, \eqref{A: lipschitz}, and \eqref{A: wellposed}.  For any continuous $F_{\text{in}} \in CDF(\overline{\mathbb{R}})$, there is a unique bounded (continuous) viscosity solution $F$ of \eqref{E: initial value problem appendix}.  Further, for any $t > 0$, the function $F(\cdot,t)$ is in $CDF(\overline{\mathbb{R}})$.   \end{prop}	
		
			\begin{proof} The existence of a bounded solution follows from Perron's Method, see \cite{user} and the references therein.  By Theorem \ref{T: comparison}, this is the unique bounded solution.  By \eqref{A: wellposed}, since $0 \leq F_{\text{in}} \leq 1$, the constant functions $G(x,t) \equiv 1$ and $G(x,t) \equiv 0$ are respectively viscosity super- and subsolutions of \eqref{E: initial value problem appendix}, which implies that $0 \leq F(x,t) \leq 1$ for any $(x,t)$ by comparison.  Finally, for any $y \in \mathbb{R}$, the function $(x,t) \mapsto F(x + y,t)$ is a viscosity solution of \eqref{E: initial value problem appendix} with the initial datum $x \mapsto F_{\text{in}}(x + y)$.  Thus, since $F_{\text{in}}$ is nondecreasing, the comparison principle implies that $F(x + y,t) \leq F(x,t)$ for all $t \geq 0$ if $y \geq 0$.  Altogether, this proves that $F(\cdot,t) \in CDF(\overline{\mathbb{R}})$ for each $t > 0$.    \end{proof}
		
In general, let $\underline{F}$ and $\overline{F}$ be the maximal sub- and minimal supersolution of \eqref{E: initial value problem appendix}:
	\begin{align*}
		\underline{F}(x,t) &= \sup \left\{ F(x,t) \, \mid \, F \, \, \text{viscosity subsolution of} \, \, \eqref{E: initial value problem appendix} \right\}, \\
		\overline{F}(x,t) &= \inf \left\{ G(x,t) \, \mid \, G \, \, \text{viscosity supersolution of} \, \, \eqref{E: initial value problem appendix} \right\}.
	\end{align*}
Notice that the comparison principle implies that $\underline{F} \leq \overline{F}$ and any viscosity solution (continuous or discontinuous) lies in between the two.  This immediately implies a uniqueness criterion.

	\begin{prop} \label{P: uniqueness criterion} Assume that $\mathcal{F}$ and $Q$ satisfy assumptions \eqref{A: elliptic} and \eqref{A: lipschitz}.  Let $\underline{F}$ and $\overline{F}$ be the maximal subsolution and minimal supersolution of \eqref{E: initial value problem appendix}, respectively, for some given $F_{\text{in}} \in CDF(\overline{\mathbb{R}})$.  If $\underline{F} = \overline{F}$ in $\mathbb{R} \times (0,+\infty)$, then any discontinuous viscosity solution $F$ of  \eqref{E: initial value problem appendix} must also satisfy $F = \underline{F}$ in $\mathbb{R} \times (0,+\infty)$.  In particular, in this case, modulo the freedom to choose the value at discontinuities of $F_{\text{in}}$ at $t = 0$, there is a unique discontinuous viscosity solution of \eqref{E: initial value problem appendix}, which is continuous in $\mathbb{R} \times (0,+\infty)$. \end{prop}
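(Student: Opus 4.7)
The plan is to sandwich any discontinuous viscosity solution $F$ between the extremal sub- and supersolutions via the chain
\begin{equation*}
\overline{F} \leq F_{*} \leq F^{*} \leq \underline{F} \quad \text{in } \mathbb{R} \times [0,\infty).
\end{equation*}
Since $F_{*} \leq F^{*}$ always holds, under the hypothesis $\overline{F} = \underline{F}$ on $\mathbb{R} \times (0,\infty)$ all four quantities coincide there; in particular $F$ is continuous and equal to $\overline{F}$ on $\mathbb{R} \times (0,\infty)$. A second discontinuous solution $\tilde F$ obeys the same squeeze and hence agrees with $F$ for $t > 0$, while at $t = 0$ the identities $F^{*}(\cdot,0) = \tilde{F}^{*}(\cdot,0) = (F_{\text{in}})^{*}$ and $F_{*}(\cdot,0) = \tilde{F}_{*}(\cdot,0) = (F_{\text{in}})_{*}$ force $F(x,0), \tilde{F}(x,0) \in [(F_{\text{in}})_{*}(x), (F_{\text{in}})^{*}(x)]$ at each point. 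This leaves exactly the freedom to choose values at discontinuities of $F_{\text{in}}$ asserted in the proposition.

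The heart of the argument is the first inequality $\overline{F} \leq F_{*}$. Fix an arbitrary viscosity subsolution $G$ of \eqref{E: initial value problem appendix}. By the definition of discontinuous viscosity solution, $F_{*}$ is bounded, lower semicontinuous, satisfies the supersolution PDE inequality on $\mathbb{R} \times (0,\infty)$, and $F_{*}(\cdot,0) = (F_{\text{in}})_{*}$. To invoke Theorem \ref{T: comparison} on the pair $(G, F_{*})$ one must verify
\begin{equation*}
\lim_{\delta \downarrow 0} \sup \left\{ G(x,0) - F_{*}(y,0) \, \mid \, |x - y| \leq \delta \right\} \leq 0.
\end{equation*}
This is bridged through $F_{\text{in}}$: given $\epsilon > 0$, the subsolution boundary condition on $G$ yields $\delta_{0} > 0$ such that $G(x',0) \leq F_{\text{in}}(z) + \epsilon$ whenever $|x' - z| \leq 2\delta_{0}$, and the definition of $(F_{\text{in}})_{*}$ provides, for each $y$, a point $z$ with $|z - y| \leq \delta_{0}$ and $F_{\text{in}}(z) \leq (F_{\text{in}})_{*}(y) + \epsilon$. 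Combining these for pairs with $|x - y| \leq \delta_{0}$ gives $G(x,0) \leq F_{*}(y,0) + 2\epsilon$. Theorem \ref{T: comparison} then yields $G \leq F_{*}$ pointwise, and taking the supremum over all subsolutions produces $\overline{F} \leq F_{*}$.

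The same strategy, applied to the pair $(F^{*}, H)$ for an arbitrary supersolution $H$ of the IVP and invoking $F^{*}(\cdot,0) = (F_{\text{in}})^{*}$, gives $F^{*} \leq H$ for every such $H$ and therefore $F^{*} \leq \underline{F}$. This completes the sandwich and, combined with the squeeze step described in the first paragraph, proves the proposition. The main obstacle is not conceptual but notational, namely, reconciling the three distinct formulations of the initial condition — the $\limsup$/$\liminf$ built into the sub- and supersolution definitions and the identities $F^{*}(\cdot,0) = (F_{\text{in}})^{*}$, $F_{*}(\cdot,0) = (F_{\text{in}})_{*}$ in the discontinuous-solution definition — so that Theorem \ref{T: comparison} may be legitimately applied across them. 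Once the envelopes are unwound via the elementary $\epsilon$-bridging argument above, no further machinery is required.
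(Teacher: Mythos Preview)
Your argument is correct and is essentially the paper's own: the paper's proof is three sentences, invoking the sandwich $\overline{F} \leq F \leq \underline{F}$ already asserted in the paragraph preceding the proposition, while you have fleshed out precisely why Theorem \ref{T: comparison} delivers that sandwich by reconciling the envelope initial data $(F_{\text{in}})_{*}$, $(F_{\text{in}})^{*}$ with the $\limsup/\liminf$ boundary conditions. One small point you omit: the proposition also asserts \emph{existence}, and the paper handles this by observing that the hypothesis $\overline{F} = \underline{F}$ on $\mathbb{R} \times (0,\infty)$ forces $\overline{F}$ (equivalently $\underline{F}$) itself to be a discontinuous viscosity solution; your argument only shows any two solutions coincide for $t>0$, presupposing at least one exists.
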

	
		\begin{proof} As in the discussion preceding the statement, if $F$ is any discontinuous viscosity solution, then $\underline{F} \leq F \leq \overline{F}$ in $\mathbb{R} \times [0,+\infty)$ in general.  Therefore, in the present setting, $F = \underline{F} = \overline{F}$ in $ \mathbb{R} \times (0,+\infty) $, implying that there is at most one discontinuous viscosity solution (up to arbitrary choices at $t = 0$ at points where $F_{\text{in}}$ jumps).  
		
		The fact that at least one exists is almost immediate: Since $\underline{F}$ is a supremum of subsolutions, the upper semicontinuous envelope $ \underline{F}^{*} $ satisfies $ \partial_{t} \underline{F}^{*} - \mathcal{ F } ( \partial_{x} \underline{F}^{*} , \partial_{x}^{2} \underline{F}^{*} ) + Q ( \underline{F}^{*} ) \leq 0$ in $\mathbb{R} \times (0,+\infty)$, and solutions of \eqref{E: initial value problem appendix} with initial data larger than $F_{\text{in}}$ can be used to prove  $ \underline{ F }^{*}(\cdot,0) = F_{\text{in}} $ at $ t = 0 $.  Therefore, by comparison, $ \underline{F}^{*} \leq \overline{F} $ in $\mathbb{R} \times [0,+\infty)$.  A similar argument shows that $\partial_{t} \overline{F}_{*} - \mathcal{F} ( \partial_{x} \overline{F}_{*}, \partial^{2}_{x} \overline{F}_{*} ) + Q ( \overline{F}_{*} ) \geq 0$ in $\mathbb{R} \times (0,+\infty) $ and $\overline{F}_{*}(\cdot,0)$ equals the lower semicontinuous envelope of $F_{\text{in}}$, so $\overline{F}_{*} \geq \underline{F}$.  Since $\overline{F} = \underline{F}$ in $\mathbb{R} \times (0,+\infty)$ by assumption and the upper (resp.\ lower) semicontinuous envelope of a function is always larger (resp.\ smaller) than the function itself, this proves $\underline{F}$ is continuous in $\mathbb{R} \times (0,+\infty)$ and a discontinuous viscosity solution of \eqref{E: initial value problem appendix}. \end{proof}
		
The following lemma will be used below to verify the identity $\underline{F} = \overline{F}$.

	\begin{lemma} \label{L: squeeze lemma} Assume that $\mathcal{F}$ and $Q$ satisfy assumptions \eqref{A: elliptic} and \eqref{A: lipschitz} and fix $F_{\text{in}} \in CDF(\overline{\mathbb{R}})$.  Suppose that $\{F^{(N),+}_{\text{in}}\}$ and $\{F^{(N),-}_{\text{in}}\}$ are two sequences of continuous functions in $CDF(\overline{\mathbb{R}})$ such that
		\begin{gather}
			F^{(N-1),-}_{\text{in}} \leq F^{(N),-}_{\text{in}} \leq F_{\text{in}} \leq F^{(N),+}_{\text{in}} \leq F^{(N-1),+}_{\text{in}} \quad \text{pointwise for each} \, \, N, \label{E: lemma construction 1} \\
			\lim_{N \to +\infty} F^{(N),+}_{\text{in}} = \lim_{N \to +\infty} F^{(N),-}_{\text{in}} = F_{\text{in}} \quad \text{vaguely.} \label{E: lemma construction 2}
		\end{gather}
	Let $\{F^{(N),+}\}$ and $\{F^{(N),-}\}$ be the (continuous) viscosity solutions of \eqref{E: initial value problem appendix} with respective initial data $\{F^{(N),+}_{\text{in}}\}$ and $\{F^{(N),-}_{\text{in}}\}$.  If the limit of the two sequences coincides, that is, if 
		\begin{align*}
			F := \lim_{N \to +\infty} F^{(N),+} = \lim_{N \to +\infty} F^{(N),-} \quad \text{pointwise in} \, \, \mathbb{R} \times (0,+\infty),
		\end{align*}
	then the maximal subsolution and minimal supersolution of \eqref{E: initial value problem appendix} with initial datum $F_{\text{in}}$ are equal in $\mathbb{R} \times (0,+\infty)$ and, in particular, they both equal the limit $F$.    \end{lemma}
	
		\begin{proof} This is immediate from the fact that $F^{(N),-} \leq \underline{F} \leq \overline{F} \leq F^{(N),+}$ for any $N$ by comparison. \end{proof}

\subsection{Uniqueness for Nondecreasing Data} In this section, the PDE's of interest here are shown to satisfy the uniqueness criterion of Proposition \ref{P: uniqueness criterion}.  For the Hamilton-Jacobi equations arising in the $\text{Beta}(2,1)$ case, the fact that almost-everywhere-continuous data leads to a unique solution at positive times is known from \cite{chen_su}.  In the case of the second-order equation in the $\text{Beta}(2,2)$ case, the uniqueness of solutions follows, for instance, from the characterization of nonnegative solutions of the porous medium equation.  Short proofs are presented here for the reader's convenience.

\medskip

\noindent\textit{Case: $Q = 0$.} The discussion begins with the classical case when the reaction term $Q = 0$.  

First, consider the case when $\mathcal{F}$ is a first-order operator of the form
	\begin{align*}
		\mathcal{F}(v,w) = -H(v)
	\end{align*}
for some convex or concave function $H : \mathbb{R} \to \mathbb{R}$ exhibiting superlinear growth:
	\begin{align} \label{E: superlinear}
		\lim_{|v| \to \infty} |v|^{-1} | H(v) | = + \infty.
	\end{align} 
	
At this stage, it is worth noting that there is no loss of generality in assuming that $H$ is convex.  Indeed, in general, if $\partial_{t} F + H(\partial_{x} F) = 0$ in the viscosity sense in $\mathbb{R} \times (0,+\infty)$, then the function $G(x,t) = 1 - F(-x,t)$ satisfies
	\begin{align*}
		\partial_{t} G - H( \partial_{x} G )  = 0 \quad \text{in the viscosity sense in} \, \, \mathbb{R} \times (0,+\infty)
	\end{align*}
and $H$ is convex if and only if the function $v \mapsto -H(v)$ is concave.  

For superlinear convex/concave $H$, the Hopf-Lax formula furnishes the unique discontinuous viscosity solution of \eqref{E: initial value problem appendix} when $Q = 0$.

	\begin{prop} \label{P: uniqueness HJ} Let $F_{\text{in}} \in CDF(\overline{\mathbb{R}})$ and assume $Q = 0$.  If $\mathcal{F}(v,w) = -H(v)$ for a convex $H : \mathbb{R} \to \mathbb{R}$ satisfying $H(0) = 0$ and \eqref{E: superlinear}, and if $\underline{F}$ and $\overline{F}$ are the maximal subsolution and minimal supersolution, respectively, of \eqref{E: initial value problem appendix}, then, for any $(x,t) \in \mathbb{R} \times (0,+\infty)$,
		\begin{align} \label{E: hopf lax}
			\underline{F}(x,t) = \overline{F}(x,t) = \inf \left\{ F_{\text{in}}( y ) + t L \left( \frac{x - y}{t} \right) \, \mid \, y \in \mathbb{R} \right\}.
		\end{align}
	where $L$ is the Legendre-Fenchel transform of $H$. 
	
	Furthermore, the following statements hold:
		\begin{itemize}
			\item[(i)] For any $t > 0$, the functions $\underline{F}(\cdot,t)$ and $F_{\text{in}}$ coincide on the set $\{-\infty,+\infty\}$.
			\item[(ii)] If $\{ F^{(n)}_{\text{in}}\}$ is a sequence in $CDF(\overline{\mathbb{R}})$ converging vaguely to $F_{\text{in}}$, and if $\{ \underline{F}^{(n)} \}$ are the corresponding maximal subsolutions of \eqref{E: initial value problem appendix}, then, for any compact set $K \subseteq \mathbb{R} \times (0,+\infty)$,
				\begin{align*}
					\lim_{ n \to \infty } \sup \left\{ | \underline{F}^{(n)}(x,t) - \underline{F}^{(n)}(x,t) | \, \mid \, (x,t) \in K \right\} = 0.
				\end{align*}
		\end{itemize}
	\end{prop}
	
Some regularity of $F_{\text{in}}$ is necessary to ensure that $\underline{F}$ and $\overline{F}$ coincide at positive times.  For instance, $F_{\text{in}} = \mathbf{1}_{\mathbb{Q}}$ provides a counterexample.  An optimal version of the result can be found in \cite{chen_su}.
	
		\begin{proof}  Let $F$ be the function defined in the righthand side of \eqref{E: hopf lax}.  When $F_{\text{in}}$ is continuous, the fact that $F$ is the (continuous) viscosity solution is classical (see, e.g., \cite[Section 10.3.4]{evans}).  Otherwise, let $\{F^{(N),+}_{\text{in}}\}$ and $\{F_{\text{in}}^{(N),-}\}$ be two sequences converging to $F_{\text{in}}$ as in Lemma \ref{L: squeeze lemma}, and let $\{F^{(N),+}\}$ and $\{F^{(N),-}\}$ be the corresponding solutions given by \eqref{E: hopf lax}.  For any point of continuity $y$ of $F_{\text{in}}$, any $x \in \mathbb{R}$, and any $t > 0$,
		\begin{align*}
			F_{\text{in}}(y) + t L \left( \frac{x - y}{t} \right) = \lim_{N \to \infty} F_{\text{in}}^{(N),-}(y) + t L \left( \frac{x-y}{t} \right) \geq \lim_{N\to \infty} F^{(N),+}(x,t).
		\end{align*}
	Therefore, since $F_{\text{in}}$ is right-continuous, this proves $ \lim_{N \to \infty} F^{(N),+} \leq F $ pointwise in $\mathbb{R} \times (0,+\infty)$.  Next, since the superlinearity of $H$ implies that $L$ is superlinear (see \cite[Theorem 3, Section 3.3.2]{evans}), for any $(x,t)$, it is possible to write $F^{(N),-}(x,t) = F^{(N),-}_{\text{in}}(y_{N}) + t L ( ( x - y_{N} ) / t )$ for some bounded sequence $\{ y_{N} \}$.  If $y_{N} \to y$ as $N \to \infty$ (say, along a subsequence), then
			\begin{equation*}
				F_{\text{in}}(y - \delta) = \lim_{N \to \infty} F^{(N),-}(y-\delta) \leq \liminf_{N \to \infty} F^{(N),-}_{\text{in}}(y_{N}) \quad \text{for almost every} \, \, \delta > 0,
			\end{equation*}
		from which it follows that $F (x,t) \leq \lim_{N \to +\infty} F^{(N),-}(x,t)$.  Therefore, by Lemma \ref{L: squeeze lemma}, $F = \underline{F} = \overline{F}$.

	Item (i) follows directly from the representation \eqref{E: hopf lax}, the superlinearity of $L$, and the fact that $0 \leq F_{\text{in}} \leq 1$.  Item (ii) can be proved using the comparison principle, Theorem \ref{T: comparison}, in conjunction with the half-relaxed limit method; the interested reader can compare with the proof of Corollary \ref{C: arbitrary convergence} below. \end{proof}

Next, let $m > 1$ and consider the case of the nonlinearity
	\begin{align} \label{E: porous medium nonlinearity}
		\mathcal{F} ( v , w ) = a |v|^{m - 1} w
	\end{align}
Notice that if $F$ were a smooth strictly increasing solution of $\partial_{t} F - a |\partial_{x} F|^{m-1} \partial_{x}^{2}F = 0$, then the derivative $\rho = \partial_{x} F$ would solve the porous medium equation (PME) $\partial_{t} \rho = \frac{a}{m} \partial_{x}^{2} ( \rho^{m} )$.  To prove the uniqueness of solutions with possibly discontinuous, nondecreasing data, this reasoning will be inverted.

	\begin{prop} \label{P: uniqueness PME} Let $F_{\text{in}} \in CDF(\overline{\mathbb{R}})$ and assume $Q = 0$.  If $\mathcal{F}$ has the form \eqref{E: porous medium nonlinearity} for some $m > 1$, then the maximal subsolution $\underline{F}$ and minimal supersolution $\overline{F}$ of \eqref{E: initial value problem appendix} are given for $(x,t) \in \mathbb{R} \times (0,+\infty)$ by the formula
		\begin{align} \label{E: PME formula}
			\underline{F}(x,t) = \overline{F}(x,t) = F_{\text{in}}(-\infty) + \int_{-\infty}^{x} \rho(y,t) \, dy,
		\end{align}
	where $\rho \geq 0$ is the unique continuous distributional solution of the initial-value problem
		\begin{align} \label{E: PME appendix}
			\left\{ \begin{array}{r l}
				\partial_{t} \rho - \frac{a}{m} \partial_{x}^{2} ( \rho^{m} ) = 0 & \text{in} \, \, \mathbb{R} \times (0,+\infty), \\
				\rho(0) = \partial_{x} F_{\text{in}}.
			\end{array} \right.
		\end{align}

	Furthermore, the following statements hold:
		\begin{itemize}
			\item[(i)] For any $t > 0$, the functions $\underline{F}(\cdot, t)$ and $F_{\text{in}}$ coincide on the set $\{-\infty,+\infty\}$.  
			\item[(ii)] If $\{ F_{\text{in}}^{(n)} \}$ is some sequence in $CDF(\overline{\mathbb{R}})$ converging vaguely to $F_{\text{in}}$, and if $\{ \underline{F}^{(n)} \}$ are the corresponding maximal subsolutions of \eqref{E: initial value problem appendix}, then, for any compact set $K \subseteq \mathbb{R} \times (0,+\infty)$,
			\begin{align*}
				\lim_{n \to + \infty} \sup \left\{ | \underline{F}^{(n)}(x,t) - \underline{F}(x,t) | \, \mid \, (x,t) \in K \right\} = 0.
			\end{align*}
		\end{itemize}
		
%%%
	\end{prop} 
	
When $F_{\text{in}}$ is uniformly Lipschitz, the representation formula obtained above is a special case of \cite[Theorem 3.1]{addario-berry_beckman_lin-symmetric}.
	
		\begin{proof} The uniqueness and claimed representation formula for the solution follows from the classical uniqueness theorem for the PME by Dahlberg and Kenig \cite{dahlberg_kenig}.  To see this, first, note that if $\partial_{x} F_{\text{in}}$ is smooth and positive, then it is well-known that the solution $\rho$ of the PME is smooth, hence the antiderivative given by \eqref{E: PME formula} is a smooth solution of \eqref{E: initial value problem appendix}.  It is classical that smooth solutions are viscosity solutions.  It is also classical in this setting that $\int_{-\infty}^{\infty} \rho(y,t) \, dy = F_{\text{in}}(+\infty) - F_{\text{in}}(-\infty)$ for each $t > 0$, as in item (i).
		
		It will be convenient later in the proof to note that, in this one-dimensional setting, since the initial data is a subprobability measure, the pressure $\rho^{m-1}$ associated with any of the smooth solutions above satisfies the following estimates
			\begin{align} \label{E: pressure bound}
				\tau^{ \frac{ m - 1 }{ m + 1 } } |\rho^{m-1}| + \tau^{ \frac{ m }{ m + 1 } } | \partial_{x} ( \rho^{m-1} ) | + \tau^{ \frac{ 2 m }{ m + 1 } } | \partial_{t} ( \rho^{m-1} ) |   \leq C \quad \text{in} \, \, \mathbb{R} \times (\tau,+\infty),
			\end{align}
		where $C > 0$ is a universal constant and $\tau > 0$ is arbitrary (see \cite[Chapter 15]{vazquez}).
		
		If $F_{\text{in}}$ is absolutely continuous with $\partial_{x} F_{\text{in}} \in L^{1}(\mathbb{R})$, then it can be approximated by smooth, strictly increasing functions $\{F_{\text{in}}^{(N)}\}$, with $\{\partial_{x} F_{\text{in}}^{(N)}\}$ converging in $L^{1}$, and the corresponding solutions $\{\rho^{(N)}\}$ of the PME then converge in $L^{\infty}([0,+\infty);L^{1}(\mathbb{R}))$ by the $L^{1}$ contractivity property of the equation (see \cite[Chapter 9]{vazquez}) to the solution $\rho$ started from $\partial_{x} F_{\text{in}}$.  This convergence implies uniform convergence of the antiderivatives: Since the uniform limit of viscosity solutions is a viscosity solution, once again \eqref{E: PME formula} provides the viscosity solution in this case.  Note also that item (i) remains true and the bounds \eqref{E: pressure bound} are preserved in the limit.  
		
		Finally, if $\partial_{x} F_{\text{in}}$ is merely a finite measure, consider sequences of smooth functions $\{F^{(N),+}_{\text{in}}\}$ and $\{F^{(N),-}_{\text{in}}\}$ as in Lemma \ref{L: squeeze lemma}, chosen in such a way that $F^{(N),\pm}_{\text{in}}(\bar{x}) \to F_{\text{in}}(\bar{x})$ as $N \to +\infty$ for each $\bar{x} \in \{-\infty,+\infty\}$.  By the bounds \eqref{E: pressure bound}, the corresponding solutions of the PME $\{\rho^{(N),+}\}$ and $\{\rho^{(N),-}\}$ started from the data $\{ \partial_{x} F^{(N),+}_{\text{in}}\}$ and $\{ \partial_{x} F^{(N),-}\}$ are uniformly bounded and equicontinuous in $\mathbb{R} \times (\tau,+\infty)$ for any $\tau > 0$.  Therefore, the antiderivatives $\{F^{(N),+}\}$ and $\{F^{(N),-}\}$ become equicontinuous after a positive amount of time.  Since they are, respectively, monotone decreasing and monotone increasing in $N$, the limits $F^{+} = \lim_{N \to \infty} F^{(N),+}$ and $F^{-} = \lim_{N \to \infty} F^{(N),-}$ exist.  Since $F^{(N),-} \leq F^{-} \leq F^{+} \leq F^{(N),+}$ holds in $\mathbb{R} \times [0,+\infty)$ for any $N$, $F^{-}(\cdot,t)$ and $F^{+}(\cdot,t)$ both converge vaguely to $F_{\text{in}}$ as $t \downarrow 0$.  By Arzel\`{a}-Ascoli, the derivatives $\partial_{x} F^{-}$ and $\partial_{x}F^{+}$ are continuous distributional solutions of the PME in $\mathbb{R} \times (0,+\infty)$, and the vague convergence of $F^{-}$ and $F^{+}$ at time $t = 0$ implies that the initial traces of $\partial_{x} F^{-}$ and $\partial_{x}F^{+}$ both equal $\partial_{x} F_{\text{in}}$.  Therefore, by Dahlberg-Kenig, $\partial_{x}F^{-} = \partial_{x}F^{+}$ in $\mathbb{R} \times (0,+\infty)$, which implies the difference $F^{+} - F^{-}$ depends only on time.
		
		Since item (i) holds for regular solutions, and, in the construction above, $F^{(N),\pm}_{\text{in}}(\bar{x}) \to F_{\text{in}}(\bar{x})$ for each $\bar{x} \in \{-\infty,+\infty\}$, it follows that $F^{+}(\bar{x},t) = F^{-}(\bar{x},t) = F_{\text{in}}(\bar{x})$ also holds for any $t$.  Therefore, $F^{+} = F^{-}$, and so \eqref{E: PME formula} follows by Lemma \ref{L: squeeze lemma} and item (i) holds.
		
		Item (ii) follows as in the previous proposition.\end{proof}
		
\medskip

\noindent\textit{Case $Q \neq 0$.} Next, using only the comparison principle, uniqueness in the case $Q = 0$ is upgraded to the case when $Q$ is nonzero.

	\begin{prop} \label{P: unique with reaction} The identity $\underline{F} = \overline{F}$ in $\mathbb{R} \times (0,+\infty)$ still holds for the initial-value problem \eqref{E: initial value problem appendix} with arbitrary data in $CDF(\overline{\mathbb{R}})$ if $\mathcal{F}$ satisfies the assumptions of Propositions \ref{P: uniqueness HJ} or \ref{P: uniqueness PME} and $Q$ satisfies \eqref{A: lipschitz} and \eqref{A: wellposed}. \end{prop} 
	
		\begin{proof} Fix $F_{\text{in}} \in CDF(\overline{\mathbb{R}})$.  If $F_{\text{in}}$ is continuous, then Proposition \ref{P: perron} implies there is a unique viscosity solution $F$ of \eqref{E: initial value problem appendix} and $\underline{F} = \overline{F} = F$.  Otherwise, if it has jumps, fix sequences $\{ F_{\text{in}}^{(N),+} \}$ and $\{ F_{\text{in}}^{(N),-} \}$ of continuous functions in $CDF(\overline{\mathbb{R}})$ satisfying \eqref{E: lemma construction 1} and \eqref{E: lemma construction 2} as in Lemma \ref{L: squeeze lemma} and, in addition,
			\begin{gather} \label{E: convergence at infinity appendix part}
				\lim_{N \to \infty} F_{\text{in}}^{(N),-}( \pm \infty) = \lim_{N \to \infty} F_{\text{in}}^{(N),+}( \pm \infty) = F_{\text{in}}( \pm \infty).
			\end{gather}
		For each $N$, let $F^{(N),+}$ and $F^{(N),-}$ denote the unique viscosity solutions of \eqref{E: initial value problem appendix} with initial data $F_{\text{in}}^{(N),+}$ and $F_{\text{in}}^{(N),-}$, respectively.
		
		By construction and the comparison principle, the bounds $F^{(N),-} \leq \underline{F} \leq \overline{F} \leq F^{(N),+}$  hold pointwise in $\mathbb{R} \times [0,+\infty)$.  Therefore, it only remains to show that, for any $t > 0$, 
			\begin{align} \label{E: final uniqueness argument}
				\lim_{N \to \infty} \sup \left\{ F^{(N),+}(x,t) - F^{(N),-}(x,t)  \, \mid \, x \in \mathbb{R} \right\} = 0.
			\end{align}
		To this end, since $Q$ is bounded in $[0,1]$, notice that the modified functions $\tilde{F}^{(N),+}$ and $\tilde{F}^{(N),-}$ given by 
			\begin{align*}
				\tilde{F}^{(N),\pm}(x,t) = F^{(N),\pm}(x,t) \mp C(Q) t, \quad \text{where} \quad C(Q) = \sup \left\{ |Q(q)| \, \mid \, 0 \leq q \leq 1 \right\},
			\end{align*}
		are, respectively, sub- and supersolutions of the PDE $\partial_{t} F - \mathcal{F} ( \partial_{x} F, \partial^{2}_{x} F ) = 0$ with $Q$ removed.  Therefore, if $G^{(N),+}$ and $G^{(N),-}$ are the viscosity solutions of this PDE corresponding to initial data $F^{(N),+}_{\text{in}}$ and $F^{(N),-}_{\text{in}}$, respectively, then the comparison principle implies
			\begin{align} \label{E: key comparison}
				F^{(N),+} - C(Q) t = \tilde{F}^{(N),+} \leq G^{(N),+} \quad \text{and} \quad G^{(N),-} \leq \tilde{F}^{(N),-} = F^{(N),-} + C(Q) t,
			\end{align}
		where all the inequalities above hold pointwise in $\mathbb{R} \times [0,+\infty)$.  On the other hand, by Propositions \ref{P: uniqueness HJ} or \ref{P: uniqueness PME}, if $G$ is the discontinuous viscosity solution of the PDE $\partial_{t} F - \mathcal{F} ( \partial_{x} F, \partial^{2}_{x} F ) = 0$ with initial datum $F_{\text{in}}$, then, for any $\delta > 0$, the functions $G^{(N),+}(\cdot,\delta)$ and $G^{(N),-}(\cdot,\delta)$ converge pointwise in $\mathbb{R}$ to $G(\cdot,\delta)$.  Since the constants $G^{(N),+}(\pm \infty,\delta)$ and $G^{(N),-}(\pm \infty, \delta)$ also converge to $G(\pm \infty, \delta)$ by \eqref{E: convergence at infinity appendix part} and item (i) in the aforementioned propositions, Dini's Theorem (applied to the compact metric space $\mathbb{R} \cup \{-\infty,+\infty\}$) implies that this convergence is actually uniform:
			\begin{align*}
				\lim_{N \to \infty} \sup \left\{ G^{(N),+}(x,\delta) - G^{(N),-}(x,\delta)  \, \mid \, x \in \mathbb{R} \right\} = 0.
			\end{align*}   
		Therefore, by \eqref{E: key comparison},
			\begin{align} \label{E: key observation}
				\lim_{N \to \infty} \sup \left\{ F^{(N),+}(x,\delta) - F^{(N),-}(x,\delta) \, \mid \, x \in \mathbb{R} \right\} \leq 2C(Q) \delta.
			\end{align}
			
		Finally, moving back to the original problem (i.e., with the reaction term $Q$), the estimate \eqref{E: lipschitz bound} in Theorem \ref{T: comparison} implies that, for any $t > \delta$,
			\begin{align*}
				\limsup_{N \to \infty} \sup \left\{ F^{(N),+}(x,t) - F^{(N),-}(x,t) \, \mid \, x \in \mathbb{R} \right\} \leq 2C(Q) \delta e^{ L (t - \delta)}.
			\end{align*}
		After sending $\delta \downarrow 0$, this becomes \eqref{E: final uniqueness argument}, as desired.
		\end{proof}
		
\subsection{Viscosity Solutions in $CDF(\overline{\mathbb{R}})$} Recall that throughout this work, the functions of interest belong to the space $CDF(\overline{\mathbb{R}})$ of CDF's of extended real-valued random variables.  (Recall that this is nothing other than the space of nondecreasing, right-continuous functions taking values in $[0,1]$.)  This section covers some specific simplifications of viscosity solutions theory that are applicable as long as the sub- or supersolutions in question are nondecreasing in the spatial variable.  Specifically, it shows that, in this case, there is no loss of generality assuming that the test functions involved are both globally defined and nondecreasing in the spatial variable.

The first lemma concerns the case when a subsolution is touched above by a test function that is increasing close to the touching point.

\begin{lemma} \label{L: test function increasing} Fix $T,A > 0$ and let $F : \mathbb{R} \times (0,T) \to [0,A)$ be an upper semicontinuous function that is nondecreasing in the spatial variable.  Fix $(x_{0},t_{0}) \in \mathbb{R} \times (0,+\infty)$ and let $\varphi$ be a smooth function defined in some neighborhood of $(x_{0},t_{0})$ such that $\varphi(x_{0},t_{0}) < A$ and $\partial_{x} \varphi(x_{0},t_{0}) > 0$.  If $\varphi$ touches $F$ from above at $(x_{0},t_{0})$, then there is an $r > 0$, a nondecreasing $C^{2}$ function $\psi_{1} : \mathbb{R} \to [0,1]$ with $\psi_{1}(x) = \psi_{1}(x_{0} - r)$ for each $x \leq x_{0} - r$ and $\psi_{1}(x) = A$ for each $x \geq x_{0} + r$, and a $C^{2}$ function $\psi_{2} : \mathbb{R} \to \mathbb{R}$ with $\psi_{2}(t_{0}) = 0$ such that if $\psi$ is the function
	\begin{align*}
		\psi(x,t) = \psi_{1}(x) + \psi_{2}(t),
	\end{align*}
then
	\begin{align*}
		F(x,t) \leq \psi(x,t) \quad \text{for each} \, \, (x,t) \in \mathbb{R} \times [t_{0}-r,t_{0}+r]
	\end{align*}
and equality holds if and only if $(x,t) = (x_{0},t_{0})$.  Further, for any $\delta > 0$, it is possible to construct $\psi$ in such a way that 
	\begin{gather*}
		\psi(x_{0},t_{0}) = \varphi(x_{0},t_{0}), \quad \partial_{x} \psi(x_{0},t_{0}) = \partial_{x} \varphi(x_{0},t_{0}), \quad \partial_{t} \varphi(x_{0},t_{0}) = \partial_{t} \varphi(x_{0},t_{0}), \\
		\left| \partial_{x}^{2} \psi(x_{0},t_{0}) - \partial_{x}^{2} \varphi(x_{0},t_{0}) \right| \leq \delta.
	\end{gather*}
\end{lemma}

	\begin{proof} Fix $\delta > 0$.  By Taylor's theorem, there is an $r > 0$ and second-degree polynomials $R$ and $S$ such that $\partial^{m}_{x} R(x_{0}) = \partial_{x}^{m} \varphi(x_{0},t_{0})$ for $m \in \{0,1\}$, $S(0) = 0$, $\partial_{t}S(0) = \partial_{t}\varphi(x_{0},t_{0})$, and 
		\begin{align*}
			F(x,t) \leq \varphi(x,t) \leq R(x)+ S(t - t_{0}) \quad \text{for each} \, \, (x,t) \in [x_{0}-r,x_{0}+r] \times [t_{0}-r,t_{0}+r].
		\end{align*}
	Further, $R$ can be chosen in such a way that $|\partial_{x}^{2} R(x_{0}) - \partial_{x}^{2} \varphi(x_{0},t_{0})| \leq \delta/2$.  
	Adding a term of the form $\delta (x - x_{0})^{2}/4 + t^{2}$ to $R + S$, there is no loss of generality in assuming in addition that $\varphi(x,t) = R(x) + S(t - t_{0})$ if and only if $(x,t) = (x_{0},t_{0})$.  Since $0 < \partial_{x}\varphi(x_{0},t_{0}) = \partial_{x}R(x_{0})$, after possibly making $r$ smaller, it can similarly be assumed that $\partial_{x}R(x) > 0$ for each $x \in [x_{0} - 2r, x_{0} + 2r]$.  
	
	Given $a \in (0,r)$, let $\rho : \mathbb{R} \to \mathbb{R}$ be a smooth function such that $\rho(x) = 0$ if $|x - x_{0}| \leq \frac{a}{2}$ and $\rho(x) = 1$ if $|x - x_{0}| \geq a$.  Define $\psi_{1}$ by 
		\begin{align*}
			\psi_{1}(x) = (1 - \rho(x)) R(x) + A \rho(x) \mathbf{1}_{[x_{0},+\infty)}(x) + R \left( x_{0} - \frac{a}{2} \right) \rho(x) \mathbf{1}_{(-\infty,x_{0}]}(x).
		\end{align*}
	Since $R(x_{0}) = \varphi(x_{0},t_{0}) < A$, $\psi_{1}$ is a nondecreasing function provided $a$ is small enough.  Fix such a small $a$ henceforth and let $\psi_{2}(t) = S(t - t_{0})$.  Since $F$ is nondecreasing in the spatial variable and $F(x,t) \leq \varphi(x,t)$ for each $(x,t) \in [x_{0}-r,x_{0}+r] \times [t_{0}-r,t_{0}+r]$, the function $\psi(x,t) = \psi_{1}(x) + \psi_{2}(t)$ has the desired properties.    \end{proof}

The next lemma concerns the case when a subsolution is touched above by a test function that is flat close to the touching point.

\begin{lemma} \label{L: test function flat} Fix $T,A > 0$ and let $F : \mathbb{R} \times (0,T) \to [0,A)$ be an upper semicontinuous function that is nondecreasing in the spatial variable.  Assume that $(x_{0},t_{0}) \in \mathbb{R} \times (0,+\infty)$ and $\varphi$ is a smooth function defined in some neighborhood of $(x_{0},t_{0})$ such that $\varphi(x_{0},t_{0}) < A$ and $\partial_{x} \varphi(x_{0},t_{0}) = \partial_{x}^{2} \varphi(x_{0},t_{0}) = 0$.  If $\varphi$ touches $F$ from above at $(x_{0},t_{0})$, then there is an $r > 0$, a nondecreasing $C^{2}$ function $\psi_{1} : \mathbb{R} \to [0,1]$ with $\psi(x) = \psi(x_{0})$ for each $x \leq x_{0}$ and $\psi(x) = A$ for each $x \geq x_{0} + r$, and a $C^{2}$ function $\psi_{2} : [t_{0}-r,t_{0}+r] \to \mathbb{R}$ with $\psi_{2}(t_{0}) = 0$ such that if $\psi$ is the function
		\begin{equation*}
			\psi(x,t) = \psi_{1}(x) + \psi_{2}(t),
		\end{equation*} 
then $\psi(x_{0},t_{0}) = \varphi(x_{0},t_{0})$, $\partial_{t}\psi(x_{0},t_{0}) = \partial_{t}\varphi(x_{0},t_{0})$,
		\begin{gather*}
			F(x,t) \leq \psi(x,t) \quad \text{for each} \, \, (x,t) \in \mathbb{R} \times [t_{0} - r, t_{0} + r], \quad \text{and} \\
			(x_{0},t_{0}) \in \{ (x,t) \in \mathbb{R} \times [t_{0} - r, t_{0} + r] \, \mid \, F(x,t) = \psi(x,t) \} \subseteq (-\infty,x_{0}] \times \{t_{0}\}.
		\end{gather*}
	\end{lemma}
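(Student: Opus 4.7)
The plan is to adapt the construction from Lemma~\ref{L: test function increasing} to the degenerate case in which both $\partial_x \varphi(x_0, t_0) = 0$ and $\partial_x^2 \varphi(x_0, t_0) = 0$, leveraging the spatial monotonicity of $F$ together with the cubic leading term in the spatial Taylor expansion of $\varphi$ at $(x_0, t_0)$. Two preliminary observations drive the argument. First, the spatial monotonicity of $F$ forces $\partial_x^3 \varphi(x_0, t_0) \geq 0$, since otherwise Taylor expansion at $t = t_0$ would give $\varphi(x, t_0) < \varphi(x_0, t_0) = F(x_0, t_0) \leq F(x, t_0) \leq \varphi(x, t_0)$ for $x$ just to the right of $x_0$, a contradiction. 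Second, for $(x, t)$ with $x \leq x_0$, monotonicity of $F$ in $x$ reduces the bound $F(x, t) \leq \psi(x, t)$ to the one-dimensional trace bound $F(x_0, t) \leq \varphi(x_0, t) \leq \psi_1(x_0) + \psi_2(t)$, so only the time-dependence of $\varphi$ along the vertical line $\{x = x_0\}$ matters on the left half-line.

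With these reductions in hand, I would set $\psi_1(x_0) = \varphi(x_0, t_0)$ and take $\psi_1$ to be constant $\equiv \varphi(x_0, t_0)$ on $(-\infty, x_0]$, cubic of the form $\varphi(x_0, t_0) + K(x - x_0)^3$ on $[x_0, x_0 + r']$ for $K$ large enough to dominate $\tfrac{1}{6}\partial_x^3 \varphi(x_0, t_0)$, and then smoothly interpolated monotonically up to $A$ on $[x_0 + r', x_0 + r]$; the $C^2$ regularity of $\psi_1$ at $x_0$ is automatic since the cubic has vanishing first and second derivatives at $x_0$. For $\psi_2$, I would choose a $C^2$ function vanishing at $t_0$ and satisfying $\psi_2(t) > \varphi(x_0, t) - \varphi(x_0, t_0)$ strictly for $t \neq t_0$ in $[t_0 - r, t_0 + r]$, for instance a degree-two polynomial whose linear coefficient is $\partial_t \varphi(x_0, t_0)$ and whose quadratic coefficient $D$ is chosen sufficiently large compared to $\partial_t^2 \varphi(x_0, t_0)$. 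The verification of $F \leq \psi$ on $\mathbb{R} \times [t_0 - r, t_0 + r]$ then splits into three zones: on $(-\infty, x_0]$ via the trace reduction above; on $(x_0, x_0 + r']$ by comparing the Taylor expansion of $\varphi$ at $(x_0, t_0)$ with the separable majorant $\psi_1(x) + \psi_2(t)$; and on $[x_0 + r, \infty)$ by combining the strict bound $F < A = \psi_1(x)$ with the strict gap $\varphi(x_0, t_0) < A$. The strict containment of the touching set in $(-\infty, x_0] \times \{t_0\}$ then follows from the strict cubic growth of $\psi_1$ on $(x_0, x_0 + r']$ together with the strict $(t - t_0)^2$ correction in $\psi_2$.

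The principal technical obstacle lies in controlling the mixed partial contribution $\partial_t \partial_x \varphi(x_0, t_0)(x - x_0)(t - t_0)$ in the Taylor expansion of $\varphi(x, t)$ in the middle zone $x \in (x_0, x_0 + r']$. A direct application of Young's inequality splits this cross term into a $(t - t_0)^2$ piece absorbable into $\psi_2$ and a $(x - x_0)^2$ piece that cannot be dominated by the cubic $\psi_1$ near $x_0^+$. I expect this will be resolved by a joint quantitative choice of $r$, $K$, and $D$, together with a sharper exploitation of the structure of $F$: because $F$ is only upper semicontinuous and nondecreasing, it cannot saturate $\varphi$ along the diagonal curves in $(x, t)$-space where the cross term is dominant, so the effective upper bound on $F$ along those curves is strictly below $\varphi$, and this slack is what ultimately makes the separable majorant $\psi_1(x) + \psi_2(t)$ sufficient.
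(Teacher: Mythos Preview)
Your overall architecture matches the paper's: both extract $\partial_x^3\varphi(x_0, t_0) \geq 0$ from spatial monotonicity of $F$, set $\psi_1$ constant on $(-\infty, x_0]$ with cubic growth on $(x_0, x_0 + r']$ (the paper uses cubic-plus-quartic) followed by a smooth monotone patch up to $A$, and take $\psi_2$ to dominate the time trace $t \mapsto \varphi(x_0, t) - \varphi(x_0, t_0)$.

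The mixed-partial obstruction you flag is a genuine gap, and your proposed resolution does not close it. For $x > x_0$ the only upper bounds on $F$ available are $F(x, t) \leq \varphi(x, t)$ and, via monotonicity in $x$, $F(x, t) \leq \inf_{y \in [x, x_0 + r_0]} \varphi(y, t)$. When $b(t - t_0) \geq 0$ (writing $b = \partial_x\partial_t\varphi(x_0, t_0)$), that infimum is attained at $y = x$ to leading order and yields no improvement over $\varphi(x, t)$ itself, so there is no ``slack'' to exploit on that side. A direct computation then shows that for any $K, D > 0$ the function $(u, s) \mapsto Ku^3 + Ds^2 - bus$ (with $u = x - x_0 \geq 0$, $s = t - t_0$, $bs \geq 0$) has a strictly negative interior critical value, so no separable majorant with cubic spatial part and quadratic temporal part dominates the order-two cross term on any one-sided neighborhood of $(x_0, t_0)$. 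Your heuristic that $F$ ``cannot saturate $\varphi$ along the diagonal curves'' is not supported by the hypotheses: nothing rules out $F \equiv \varphi$ on the region where $\varphi$ is itself nondecreasing in $x$. It is worth noting that the paper's own proof handles this point by writing a ``fourth-order Taylor expansion'' of $\varphi$ that silently drops all mixed partials, so the two arguments share the same lacuna; a complete treatment would need either an argument that the touching condition together with monotonicity forces $b = 0$, or an upstream reduction (in the application of the lemma) to test functions for which the mixed partials already vanish.
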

	
\begin{proof} For concreteness, choose $r > 0$ such that $F(x,t) \leq \varphi(x,t)$ for each $(x,t) \in [x_{0}-r,x_{0}+r] \times [t_{0}-r,t_{0}+r]$ with equality when $(x,t) = (x_{0},t_{0})$.  Expanding $\varphi$ in a fourth-order Taylor expansion around $(x_{0},t_{0})$, one finds
		\begin{align*}
			\varphi(x,t) &= \varphi(x_{0},t_{0}) + S(t - t_{0}) + \frac{1}{3!} \partial^{3}_{x} \varphi(x_{0},t_{0}) (x - x_{0})^{3} \\
				&\qquad + \frac{1}{4!} \partial^{4}_{x} \varphi(x_{0},t_{0}) (x - x_{0})^{4} + o( |x - x_{0}|^{4} + |t - t_{0}|^{4} ),
		\end{align*}
	where $S$ is some polynomial of degree four satisfying $S(0) = 0$.  Since $F$ is nondecreasing in the $x$ variable; $F(x_{0},t_{0}) = \varphi(x_{0},t_{0})$; and $F(x,t) \leq \varphi(x,t)$ for $(x,t)$ close enough to $(x_{0},t_{0})$, it follows that
		\begin{align*}
			\partial^{3} \varphi(x_{0},t_{0}) \geq 0.
		\end{align*}
	
	Let $\alpha = \max\{ 1 + \partial_{x}^{4} \varphi(x_{0},t_{0}), 0 \}$.  Making $r$ smaller if necessary, there is no loss of generality in assuming that, for each $(x,t) \in [x_{0}-r,x_{0}+r] \times [t_{0}-r,t_{0}+r]$,
		\begin{align*}
			\varphi(x,t) &\leq \varphi(x_{0},t_{0}) + S(t - t_{0}) + \frac{ (t - t_{0})^{4} }{ 4! } + \frac{1}{3!} \partial_{x}^{3} \varphi(x_{0},t_{0}) ( x - x_{0} )^{3} + \frac{1}{4!} \alpha ( x - x_{0} )^{4} \\
				&=: P(x,t).
		\end{align*}
	with equality if and only if $(x,t) = (x_{0},t_{0})$.  $P$ has the desired form, but it is not nondecreasing in $x$.  This will be rectified in the remainder of the proof.  
	
	Define the nondecreasing function $\eta$ by
		\begin{align*}
			\eta(x) = \left\{ \begin{array}{r l}
						\varphi(x_{0},t_{0}) + \frac{1}{3!} \partial_{x}^{3} \varphi(x_{0},t_{0}) ( x - x_{0} )^{3} + \frac{1}{4!} \alpha (x - x_{0})^{4}, & \text{if} \, \, x_{0} \leq x \, \\
					\varphi(x_{0},t_{0}), & \text{otherwise.}
				\end{array} \right.
		\end{align*}
	Notice that $\eta$ is $C^{2}$.  Given $a \leq r$, let $\rho : \mathbb{R} \to [0,1]$ be a smooth, nondecreasing function such that $\rho(x) = 0$ if $x \leq x_{0} + \frac{a}{2}$ and $\rho(x) = 1$ if $x \geq x_{0} + a$.  Define $h$ by $h(x) = (1 - \rho(x) ) \eta(x) + A \rho(x)$.  Since $\eta(x_{0}) = \varphi(x_{0},t_{0}) < A$, $h$ is a  nondecreasing function provided $a$ is small enough.  Fix such a small $a$ henceforth and let $\psi$ be the modified version of $P$ given by
		\begin{align*}
			\psi(x,t) = h(x) + S(t - t_{0}) + \frac{(t - t_{0})^{4}}{4!} .
		\end{align*}
	Since $F \leq \varphi \leq P$ in $[x_{0} - r, x_{0} + r] \times [t_{0}-r,t_{0}+r]$, $F$ is nondecreasing in the $x$ variable, and $F < A$ globally, it follows that $F \leq \psi$ in $\mathbb{R} \times [t_{0}-r,t_{0}+r]$ with equality only if $x \leq x_{0}$ and $t = t_{0}$.  Therefore, the desired conclusion follows with $\psi_{1}(x) = h(x)$ and $\psi_{2}(t) = S(t-t_{0}) + (t - t_{0})^{4}/4!$. \end{proof}

%%%%%%%%%%%%%%%%%%%%%%%%%%%%%%%%%%%%%%%%%%%%%%%%%%%%%%%%%%%%%%%%%%%%%%%%%%%%%%%%%%%%%%%%%%%%%%%%%%%%%%%%%%

%%%%% CONVERGENCE ( v7 )

%%%%%%%%%%%%%%%%%%%%%%%%%%%%%%%%%%%%%%%%%%%%%%%%%%%%%%%%%%%%%%%%%%%%%%%%%%%%%%%%%%%%%%%%%%%%%%%%%%%%%%%%%%

\section{Scaling Limits of Some Monotone Semigroups in $CDF(\overline{\mathbb{R}})$} \label{A: monotone schemes}

This appendix proves Theorem \ref{T: monotone scheme}, an extension of the convergence framework of \cite{barles_souganidis} to approximations of PDE's posed in $CDF(\overline{\mathbb{R}})$.  As described already briefly in the introduction, technical issues arise stemming from the fact that $CDF(\overline{\mathbb{R}})$ is not a vector space, and the smooth test functions used in the definition of viscosity solution are not necessarily nondecreasing.  These issues are not fatal and, indeed, similar ones have been overcome in other settings, such as geometric flows (see \cite{barles_souganidis-interface}).

In the proof of the theorem, there is one technicality that needs to be overcome.  When one checks that, for instance, the limit $F$ satisfies $\partial_{t} F \leq \mathcal{F}( \partial_{x} F, \partial^{2}_{x} F ) - Q(F)$, it is desirable to invoke the asymptotics of $\mathscr{L}_{\epsilon} \varphi$ for an arbitrary smooth test function $\varphi$ that touches $F$ from above.  If $\partial_{x} \varphi > 0$ at the contact point, then Lemma \ref{L: test function increasing} above shows that $\varphi$ can be assumed to be nondecreasing in $\mathbb{R}$ and then $\mathscr{L}_{\epsilon} \varphi$ is well-defined.  Otherwise, if $\partial_{x} \varphi = 0$ at the contact point, of course, $\varphi$ generally will not be monotone at all, and some work is required to proceed.

At the same time, \emph{smooth} nondecreasing functions have a particular property that is suggestive here: If $G$ is a smooth nondecreasing function in $\mathbb{R}$, and if $\partial_{x} G(x_{0}) = 0$ at some point $x_{0}$, then $\partial_{x}^{2} G(x_{0}) = 0$.  This motivates the following lemma (which applies even if the subsolution is not monotone in the spatial variable).

	\begin{lemma} \label{L: reduced definition} Let $\mathcal{F} : \mathbb{R} \times \mathbb{R} \to \mathbb{R}$ be a continuous function that is nondecreasing in the second argument and $Q : \mathbb{R} \to \mathbb{R}$ be continuous.  An upper semicontinuous function $F : \mathbb{R} \times (0,+\infty) \to \mathbb{R}$ satisfies $\partial_{t} F - \mathcal{F} ( \partial_{x} F, \partial^{2}_{x} F ) + Q(F) \leq 0$ in the viscosity sense if and only if it has the following property: If $\varphi$ is a smooth test function that touches $F$ from above at some point $(x_{0},t_{0})$ and
		\begin{align*}
			\text{either (i)} \quad \partial_{x} \varphi(x_{0},t_{0}) \neq 0, \quad \text{or (ii)} \quad  \partial_{x} \varphi(x_{0},t_{0}) = \partial_{x}^{2} \varphi(x_{0},t_{0}) = 0,
		\end{align*}
	then 
		\begin{equation*}
			\partial_{t} \varphi(x_{0},t_{0}) \leq \mathcal{F} ( \partial_{x} \varphi(x_{0},t_{0}), \partial_{x}^{2} \varphi(x_{0},t_{0}) ).
		\end{equation*}
	\end{lemma}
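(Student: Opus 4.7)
The forward (``only if'') direction is immediate, since the restricted class of test functions appearing in the hypothesis is a subclass of all smooth test functions. The content lies in the reverse direction. By hypothesis, nothing needs to be checked when $\varphi$ satisfies (i) or (ii), so the task reduces to the following situation: $\varphi$ is a smooth test function touching $F$ from above at some $(x_0, t_0) \in \mathbb{R} \times (0, \infty)$, with $\partial_x \varphi(x_0, t_0) = 0$ and $\partial_x^2 \varphi(x_0, t_0) = A \neq 0$, and the goal is to prove $\partial_t \varphi(x_0, t_0) \leq \mathcal{F}(0, A)$.

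The plan is the standard perturbation argument from viscosity solutions theory. First, replace $\varphi$ by $\tilde\varphi(x,t) = \varphi(x,t) + |x - x_0|^4 + (t - t_0)^4$; this leaves the full jet at $(x_0, t_0)$ unchanged but makes the contact \emph{strict}, in the sense that $F - \tilde\varphi < 0$ on $B \setminus \{(x_0, t_0)\}$ for some small closed box $B$ centered at $(x_0, t_0)$. Next, for $\epsilon > 0$, set $\tilde\varphi_\epsilon(x,t) = \tilde\varphi(x,t) + \epsilon(x - x_0)$, and let $(x_\epsilon, t_\epsilon) \in B$ be a point where the upper semicontinuous function $F - \tilde\varphi_\epsilon$ attains its maximum over the compact set $B$. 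Combining strict contact with upper semicontinuity of $F - \tilde\varphi$ forces $(x_\epsilon, t_\epsilon) \to (x_0, t_0)$ as $\epsilon \to 0$; in particular, for small $\epsilon$ the maximizer lies in the interior of $B$, so after adding the constant $C_\epsilon = F(x_\epsilon, t_\epsilon) - \tilde\varphi_\epsilon(x_\epsilon, t_\epsilon)$, the perturbed test function $\tilde\varphi_\epsilon + C_\epsilon$ touches $F$ from above at $(x_\epsilon, t_\epsilon)$. Because $\partial_x \tilde\varphi(x_\epsilon, t_\epsilon) \to \partial_x \tilde\varphi(x_0, t_0) = 0$, for $\epsilon$ sufficiently small one has
\begin{align*}
\partial_x \tilde\varphi_\epsilon(x_\epsilon, t_\epsilon) = \partial_x \tilde\varphi(x_\epsilon, t_\epsilon) + \epsilon \neq 0,
\end{align*}
so hypothesis (i) applies at $(x_\epsilon, t_\epsilon)$. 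Sending $\epsilon \to 0$ and using continuity of the derivatives of $\tilde\varphi$ together with continuity of $\mathcal{F}$ yields $\partial_t \varphi(x_0, t_0) \leq \mathcal{F}(0, A)$, as required.

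The only nonroutine step is the convergence $(x_\epsilon, t_\epsilon) \to (x_0, t_0)$, and it is precisely here that the preliminary regularization by $|x - x_0|^4 + (t - t_0)^4$ is essential: without strict contact, a perturbed maximizer could drift to the boundary of $B$ and spoil the limit. Notice that condition (ii) is never actually invoked; the perturbation always produces a test function falling under case (i). Condition (ii) is kept in the statement because it is the natural output of the test-function regularizations (Lemmas \ref{L: test function increasing} and \ref{L: test function flat}), which restrict attention to monotone-in-$x$ test functions and therefore force this two-alternative form when applying the lemma in the convergence proof of Theorem \ref{T: monotone scheme}.
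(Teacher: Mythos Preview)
Your argument has a genuine gap at the step where you claim ``for $\epsilon$ sufficiently small one has $\partial_x \tilde\varphi_\epsilon(x_\epsilon, t_\epsilon) = \partial_x \tilde\varphi(x_\epsilon, t_\epsilon) + \epsilon \neq 0$.''  This does not follow: both summands tend to zero, and nothing prevents their sum from vanishing.  Concretely, whenever $F$ is smooth, the interior maximizer $(x_\epsilon, t_\epsilon)$ of $F - \tilde\varphi_\epsilon$ is a critical point, so $\partial_x \tilde\varphi_\epsilon(x_\epsilon, t_\epsilon) = \partial_x F(x_\epsilon, t_\epsilon)$; if $\partial_x F \equiv 0$ this is \emph{always} zero.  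For instance, take $\mathcal{F}(p,X) = X$ and $F(x,t) = t$: every test function touching $F$ from above has $\partial_x \varphi = 0$ at the contact point, so condition (i) is vacuous; yet $F$ is not a subsolution (it fails at $\varphi = t + \tfrac{A}{2}x^{2}$ with $0 < A < 1$).  Tracing your perturbation through this example gives $\partial_x \tilde\varphi_\epsilon(x_\epsilon,t_\epsilon) = 0$ for every $\epsilon$.

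Your closing remark that ``condition (ii) is never actually invoked'' is therefore exactly the problem: the example above shows that (i) alone cannot imply the full subsolution property, so any correct proof must make essential use of (ii).  The paper itself does not give a self-contained argument but cites \cite{barles_georgelin} and \cite{morfe_discontinuities}; in those references the argument exploits both hypotheses together with the degenerate ellipticity of $\mathcal{F}$ (and, in the geometric setting relevant to the present paper, the fact that $\mathcal{F}(0,X)$ does not depend on $X$).  The linear-tilt perturbation you propose is the right \emph{kind} of move, but it needs to be combined with an invocation of (ii) in the case where the perturbed gradient happens to vanish, rather than asserted to always land in case (i).
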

	
		\begin{proof} The fact that this weaker definition of viscosity subsolution is equivalent to the usual one can be proved exactly as in \cite[Proposition 2.2]{barles_georgelin} (see also \cite[Proof of Theorem 1.2]{morfe_discontinuities}). \end{proof}
		
\begin{remark} A corresponding version of Lemma \ref{L: reduced definition}  holds for supersolutions, as can be readily checked using the transformation $G(x,t) = 1 - F(-x,t)$. \end{remark}
	
Now Lemma \ref{L: reduced definition} can be leveraged in conjunction with Lemma \ref{L: test function flat} above to resolve the issue when $\partial_{x} \varphi = 0$ at the contact point: There is no loss of generality assuming that $\partial_{x}^{2} \varphi = 0$ also holds there, and then Lemma \ref{L: test function flat} shows that $\varphi$ can be taken to be globally nondecreasing in the spatial variable.  
	
\subsection{Proof of Theorem \ref{T: monotone scheme}} \label{A: proof scheme convergence} It is technically convenient to proceed by proving the following statement: For each $(x,t) \in \mathbb{R} \times [0,+\infty)$, 
		\begin{align} \label{E: local uniform convergence}
			\lim_{\delta \downarrow 0} \sup \left\{ | F^{(N)}_{n}(y) - F(x,t) | \, \mid \, | \delta^{(N)} y - x| + | N^{-1} n - t | + N^{-1} \leq \delta \right\} = 0.
		\end{align}
	Notice that this implies that $F_{N} \to F$ locally uniformly in $\mathbb{R} \times [0,+\infty)$ as $N \to +\infty$.

	Since the operators $\{ T^{(N)} \}$ are monotone, and nondecreasing continuous functions can be approximated locally uniformly from above and below by smooth functions, it suffices to consider the case when equality holds $F^{(N)}_{0} = F_{\text{in}}$ for each $N$ and $F_{\text{in}}$ has bounded, uniformly continuous first and second derivatives.  This will be assumed for the remainder of the proof.
	
Throughout what follows, define functions $c_{-}, c_{+} : [0,+\infty) \to \mathbb{R}$ by 
	\begin{align} \label{E: upper and lower limits}
		\frac{d c_{\pm}}{dt} = - Q( c_{\pm} ), \quad c_{-}(0) = F_{\text{in}}(-\infty), \quad c_{+}(0) = F_{\text{in}}(+\infty).
	\end{align}
Assumption (vi) ensures that these ODE's are wellposed.  As discussed in Section \ref{S: abstract framework}, for any $N$, the function $q \mapsto q - Q^{(N)}(q)$ maps $[0,1]$ into itself.  Taken together with assumption (vi), this implies that $Q(0) \leq 0$ and $Q(1) \geq 0$ for each $N$, and, thus,
	\begin{align*}
		0 \leq c_{+}(t), c_{-}(t) \leq 1 \quad \text{for each} \, \, t \geq 0.
	\end{align*}
For each $N$, let $\{c_{-}^{(N)}(n)\}$ and $\{c_{+}^{(N)}(n)\}$ be the sequences of numbers in $[0,1]$ defined analogously by
	\begin{gather*}
		c_{\pm}^{(N)}(n) = T^{(N)} c_{\pm}(n-1) = c_{\pm}^{(N)}(n-1) -  Q^{(N)} ( c_{\pm}^{(N)}(n-1) ), \\
		c_{+}^{(N)}(0) = c_{+}(0), \quad c_{-}^{(N)}(0) = c_{-}(0).
	\end{gather*}
(Assumptions (i) and (iv) imply that $T^{(N)}$ maps constant functions to constant functions, and $\mathcal{L}^{(N)}$ vanishes on constants, as explained in the discussion in Section \ref{S: abstract framework}.)
By assumption (vi) and standard ODE arguments, for any $T > 0$,
	\begin{align} \label{E: convergence of ODE}
		\lim_{\delta \downarrow 0} \sup \left\{ | c_{\pm}^{(N)}( n ) - c_{\pm}(t) | \, \mid \, 0 \leq t \leq T, \, \, | N^{-1} n - t| + N^{-1} \leq \delta \right\} = 0.
	\end{align}
Further, by assumption (iv), for any $N$,
	\begin{align} \label{E: bounds reaction term}
		c_{-}^{(N)}(n) = F^{(N)}_{n}(-\infty) \leq F^{(N)}_{n} \leq F^{(N)}_{n}(+\infty) = c_{+}^{(N)}(n) \quad \text{for each} \, \, n \geq 0.
	\end{align}
	
	\textit{Step 1: Lipschitz Estimate in Time.} Since $F_{\text{in}}$ is currently assumed to have bounded, uniformly continuous first and second derivatives, by the consistency condition (v), for each $y \in \mathbb{R}$,
		\begin{align*}
			\mathscr{L}^{(N)} F_{0}^{(N)} (y) = \mathscr{L}^{(N)} F_{\text{in}}(y) = N^{-1} \mathcal{F} ( \partial_{x} F_{\text{in}} ( \delta^{(N)} y ), \partial^{2}_{x} F_{\text{in}} ( \delta^{(N)} y ) ) + N^{-1} R^{(N)} ( \delta^{(N)} y ) .
		\end{align*}
	where $R^{(N)} \to 0$ uniformly as $N \to +\infty$.  Thus, since $\mathcal{F}$ is bounded on compact sets by continuity and the reaction term $Q^{(N)}$ is of order $N^{-1}$ by assumption (vi), there is a constant $C_{0} > 0$ such that, for sufficiently large $N$,
		\begin{align*}
			\| T^{(N)} F_{0}^{(N)} - F_{0}^{(N)} \|_{\text{sup}} \leq C_{0} N^{-1}.
		\end{align*}
	By Proposition \ref{P: contractive} below, the assumptions (i)-(iv) and (vi) imply the following contractivity property of $T^{(N)}$:
		\begin{align*}
			\| T^{(N)} F - T^{(N)} G \|_{\text{sup}} \leq (1 + L N^{-1} ) \| F - G \|_{\text{sup}}.
		\end{align*}
	Thus, the estimate of $F^{(N)}_{1} - F_{0}^{(N)} = T^{(N)} F_{0}^{(N)} - F_{0}^{(N)}$ can be iterated to find
		\begin{align*}
			\| F^{(N)}_{n+1} - F_{n}^{(N)} \|_{\text{sup}} = \| T^{(N)}F^{(N)}_{n} - T^{(N)} F_{n-1}^{(N)} \|_{\text{sup}} \leq C_{0} N^{-1} ( 1 + N^{-1} L )^{n} \quad \text{for each} \, \, n \in \mathbb{N}.
		\end{align*}
	Summing this estimate in $n$, one obtains a constant $C_{0}'' > 0$ such that that $|F^{(N)}_{n}(x) - F_{\text{in}}(x)| \leq C_{0}'' L^{-1} \{ (1 + N^{-1} L )^{n} - 1 \}$ uniformly in $(x,n)$.  In view of the definition of $F_{N}$, this implies that, for each $T > 0$ and any $N \in \mathbb{N}$,
		\begin{align*}
			\sup \left\{ |F_{N}(x,t) - F_{\text{in}}(x) | \, \mid \, (x,t) \in \mathbb{R} \times [0,T] \right\} \leq C_{0}'' L^{-1} ( e^{L T} - 1 ).
		\end{align*}

	\textit{Step 2: Convergence.}  The remainder of the proof uses the classical half-relaxed limit method.  Let $F_{\star}$ and $F^{\star}$ be the half-relaxed limits of $\{F^{(N)}_{n}\}$ defined by  
		\begin{align*}
			F_{\star}(x,t) &= \lim_{\delta \downarrow 0} \inf \left\{ F_{n}^{(N)}(y) \, \mid \, | \delta^{(N)} y - x | + | N^{-1} n - t| + N^{-1} \leq \delta \right\}, \\ 
			F^{\star}(x,t) &= \lim_{\delta \downarrow 0} \sup \left\{ F_{n}^{(N)}(y) \, \mid \, | \delta^{(N)} y - x |+ | N^{-1} n - t| + N^{-1} \leq \delta \right\},
		\end{align*}
	As is well-known, to prove the local convergence statement \eqref{E: local uniform convergence}, it suffices to show that $F^{\star} = F_{\star} = F$ (see \cite[Remark 6.4]{user} or \cite[Lemma 6.2]{barles}).  To do this, since the bound $F_{\star} \leq F^{\star}$ follows immediately from the definitions, it is only necessary to show that $F^{\star} \leq F \leq F_{\star}$.  This will follow from the comparison principle, Theorem \ref{T: comparison}, as soon as it is established that $F^{\star}$ and $F_{\star}$ define respectively a viscosity sub- and a viscosity supersolution of the initial value problem \eqref{E: initial value problem appendix}.

\textit{Step 3: $F^{\star}$ is a Subsolution.}  In view of what was proved in Step 1 and the uniform continuity of $F_{\text{in}}$, it is clear that
	\begin{align*}
		\lim_{ \delta \downarrow 0 } \sup \left\{ | F^{\star}(x,0) - F_{\text{in}}(y) | \, \mid \, x,y \in \mathbb{R} \, \, \text{such that} \, \, |x - y| \leq \delta \right\} = 0,
	\end{align*} 
so $F^{\star}$ satisfies \eqref{E: initial condition thing}.   Therefore, to see that $F^{\star}$ is a viscosity subsolution of \eqref{E: initial value problem appendix}, it only remains to establish that $\partial_{t} F^{\star} - \mathcal{F} ( \partial_{x} F^{\star}, \partial_{x}^{2} F^{\star} ) + Q(F^{\star}) \leq 0$ in the viscosity sense in $\mathbb{R} \times (0,+\infty)$.

In order to avoid boundary issues that can arise if $F^{\star}$ is somewhere equal to one, it will be convenient to pull $F^{\star}$ down: To see that $F^{\star}$ is a subsolution, it suffices to prove that, for any $\delta \in (0,1)$, the function $G^{\star} : \mathbb{R} \times [0,+\infty) \to [0, 1 -\delta]$ defined by $G^{\star}(x,t) = \max\{ F^{\star}(x,t) - \delta e^{L t}, 0 \}$ satisfies
		\begin{align*}
			\partial_{t} G^{\star} - \mathcal{F} ( \partial_{x} G^{\star}, \partial_{x}^{2} G^{\star} ) + Q ( G^{\star} ) \leq 0 \quad \text{in the viscosity sense in} \, \, \mathbb{R} \times (0,+\infty).
		\end{align*}
(By standard arguments, the subsolution property of $F^{\star}$ can then be recovered upon sending $\delta \downarrow 0$.)
For the rest of this step, fix $\delta$ and consider $G^{\star}$.  Note also that if $\{G^{(N)}_{n}\}$ is the sequence defined for a given $N$ by $G^{(N)}_{n} = \max\{ F^{(N)}_{n} - \delta ( 1 + L N^{-1} )^{n} , 0 \}$, then Lemma \ref{L: pulling down} below implies that 
		\begin{align} \label{E: subsolution final}
			G^{(N)}_{n} \leq T^{(N)} G^{(N)}_{n-1} \quad \text{pointwise for each} \, \, n \in \mathbb{N}.
		\end{align}
Further, by definition of $G^{\star}$,
	\begin{align} \label{E: half relaxed limit G}
		G^{\star}(x,t) = \lim_{\delta \downarrow 0} \sup \left\{ G^{(N)}_{n}(y) \, \mid \, | \delta^{(N)} y - x| + | N^{-1} n - t | + N^{-1} \leq \delta \right\}.
	\end{align}
Thus, in passing from $F$'s to $G$'s, nothing has changed except the stricter upper bound $G^{(N)}_{n} \leq 1 - \delta (1 + L N^{-1} )^{ n }$ holds for each $n$ and the evolution equation has been weakened to an inequality in \eqref{E: subsolution final}.

Suppose that $\varphi$ is a smooth function defined in some open subset of space-time that touches $G^{\star}$ from above at some point $(x_{0},t_{0}) \in \mathbb{R} \times (0,+\infty)$ in its domain.  The goal is to show that 
	\begin{equation} \label{E: viscosity subsolution inequality test function} 
		\partial_{t} \varphi(x_{0},t_{0}) - \mathcal{F} ( \partial_{x} \varphi(x_{0},t_{0}), \partial_{x}^{2} \varphi(x_{0},t_{0}) ) + Q ( \varphi(x_{0},t_{0}) ) \leq 0.
	\end{equation}  
Since $G^{\star}(x,t) \geq \max\{ c_{-}(t) - \delta e^{Lt}, 0 \}$ pointwise in $\mathbb{R} \times [0,+\infty)$ by \eqref{E: bounds reaction term} and the definition of $G^{\star}$, it is convenient to consider two cases separately:
	\begin{equation*}
		\text{(I)} \quad \varphi(x_{0},t_{0}) = \max\{ c_{-}(t_{0}) - \delta e^{Lt_{0}}, 0 \} \quad \text{and} \quad \text{(II)} \quad \varphi(x_{0},t_{0}) > \max\{ c_{-}(t_{0}) - \delta e^{Lt_{0}},0\}.
	\end{equation*}
	
\medskip 

\noindent\textit{Step 3, Case (I).} First, consider case (I).  Since $\varphi \geq G^{\star}$ holds in a neighborhood of $(x_{0},t_{0})$, $\varphi(x_{0},t_{0}) = G^{\star}(x_{0},t_{0})$, and $G^{\star}(x,t) \geq \max\{ c_{-}(t) - \delta e^{Lt}, 0 \}$ globally, in this case, $(x_{0},t_{0})$ is a local minimum of the function $(x,t) \mapsto \varphi(x,t) - \max\{ c_{-}(t) - \delta e^{Lt},0\}$.  In particular, $x_{0}$ is a local minimum of $x \mapsto \varphi(x,t_{0})$, hence
	\begin{align*}
		\partial_{x} \varphi(x_{0},t_{0}) = 0, \quad \partial_{x}^{2} \varphi(x_{0},t_{0}) \geq 0.
	\end{align*}
Therefore, since $\mathcal{F}(0,0) = 0$ by \eqref{E: operator vanishes at zero} and $\mathcal{F}$ is nondecreasing in the second argument,
	\begin{align} \label{E: diffusion nonnegative test function}
		\mathcal{F}(\partial_{x} \varphi(x_{0},t_{0}), \partial_{x}^{2} \varphi(x_{0},t_{0}) ) = \mathcal{F} ( 0 , \partial_{x}^{2} \varphi(x_{0},t_{0}) ) \geq 0.
	\end{align}
Restricting attention this time to the slice $x = x_{0}$, the function $t \mapsto \varphi(x_{0},t) - \max\{c_{-}(t) - \delta e^{Lt},0\}$ has a local minimum at $t = t_{0}$, from which it readily follows that either
	\begin{align*}
		\partial_{t} \varphi(x_{0},t_{0}) = \frac{d}{dt} \{ c_{-}(t) - \delta e^{Lt} \} = - Q(c_{-}(t_{0})) - L \delta e^{Lt_{0}} \leq -Q(\varphi(x_{0},t_{0}))
	\end{align*}
if $c_{-}(t_{0}) - \delta e^{Lt_{0}} > 0$, or
	\begin{align*}
		\partial_{t} \varphi(x_{0},t_{0}) = 0 \leq -Q(0) = -Q ( \varphi(x_{0},t_{0}))
	\end{align*}
if $c_{-}(t_{0}) - \delta e^{Lt_{0}} \leq 0$.  Combining the inequality $\partial_{t} \varphi(x_{0},t_{0}) \leq - Q(\varphi(x_{0},t_{0}))$ with \eqref{E: diffusion nonnegative test function}, one obtains the desired inequality \eqref{E: viscosity subsolution inequality test function}.  

\medskip

\noindent\textit{Step 3, Case (II).} It only remains to consider case (II).  That is, from now on, assume that $\varphi(x_{0},t_{0}) > \max\{ c_{-}(t_{0}) - \delta e^{Lt_{0}}, 0\}$.  

According to Lemma \ref{L: reduced definition}, there is no loss of generality in assuming that either 
	\begin{equation*}
		\text{(a)} \quad \partial_{x} \varphi(x_{0},t_{0}) \neq 0 \quad \text{or} \quad \text{(b)}
		 \quad \partial_{x} \varphi(x_{0},t_{0}) = \partial_{x}^{2} \varphi(x_{0},t_{0}) = 0.
	\end{equation*}
In case (a), since $G^{\star}$ is nondecreasing, it follows that $\partial_{x} \varphi(x_{0},t_{0}) > 0$, and Lemma \ref{L: test function increasing} is applicable.  In case (b), Lemma \ref{L: test function flat} applies.  To avoid repetition, only case (b) will be considered in what follows.

Therefore, assume henceforth that (b) holds.  According to Lemma \ref{L: test function flat} with $A =  1 - \frac{\delta}{2}$,  it is possible to fix an $r > 0$ and a function $\psi : \mathbb{R} \times [t_{0} - r, t_{0} + r] \to \mathbb{R}$ of the form
	\begin{equation*}
		\psi(x,t) = \psi_{1}(x) + \psi_{2}(t) \quad \text{for each} \, \, (x,t) \in \mathbb{R} \times [t_{0} - r, t_{0} + r],
	\end{equation*}
where $\psi_{1}$ is a nondecreasing $C^{2}$ function with $\psi_{1}(x) = \psi_{1}(x_{0})$ for $x \leq x_{0}$ and $\psi_{1}(x) = 1 - \frac{\delta}{2}$ for $x \geq x_{0} + r$; $\psi_{2}$ is $C^{2}$ in $[t_{0}-r,t_{0}+r]$ with $\psi_{2}(t_{0}) = 0$; and 
	\begin{gather*}
		\psi(x_{0},t_{0}) = \varphi(x_{0},t_{0}), \quad \partial_{t}\psi(x_{0},t_{0}) = \partial_{t} \varphi(x_{0},t_{0}), \\
		G^{\star}(x,t) \leq \psi(x,t) \quad \text{for each} \, \, (x,t) \in \mathbb{R} \times [t_{0}-r,t_{0} + r], \\
		(x_{0},t_{0}) \in \{ (x,t) \, \mid \, G^{\star}(x,t) = \psi(x,t) \} \subseteq (-\infty,x_{0}] \times \{t_{0}\}.
	\end{gather*}
Since $\psi_{2}$ is continuous in $[t_{0}-r,t_{0}+r]$ and $\psi_{1}(-\infty) = \varphi(x_{0},t_{0}) > \max\{ c_{-}(t_{0}) - \delta e^{Lt_{0}}, 0 \}$, up to shrinking $r$ if necessary, there is no loss of generality in assuming that there are constants $c_{*}, c^{*} \in (0,1)$ such that, for each $t \in [t_{0}-r,t_{0}+r]$,
	\begin{align} \label{E: room}
		\max\{ c_{-}(t) - \delta e^{Lt}, 0 \} < c_{*} \leq \psi(-\infty,t) \quad \text{and} \quad   1 - \delta < c^{*} \leq \psi(+\infty,t) \leq 1 - \frac{\delta}{4}.
	\end{align}
	
Finally, for each $N > 0$, let $\nu(N)$ be the constant
	\begin{align*}
		\nu(N) = \sup \left\{ G^{(N)}_{n}(y) - \psi(\delta^{(N)}y,N^{-1}n) \, \mid \, y \in \mathbb{R}, \, \, N^{-1} n \in [t_{0} -r, t_{0} + r] \right\}.
	\end{align*}
Notice that \eqref{E: room}, \eqref{E: bounds reaction term}, and \eqref{E: convergence of ODE} imply that there is an $R > 0$ such that, for any large enough $N$,
	\begin{align*}
		 \sup \left\{ G^{(N)}_{n}(y) - \psi( \delta^{(N)}y,N^{-1}n) \, \mid \, \delta^{(N)} |y| \geq R, \, \, N^{-1} n \in [t_{0} -r, t_{0} + r] \right\} < 0.
	\end{align*}
Therefore, by the equation \eqref{E: half relaxed limit G} relating $\{G^{(N)}_{n}\}$ to $G^{\star}$, it follows that there is a sequence $\{ ( N_{k},y_{k},n_{k} ) \}$ such that $(N^{-1}_{k},\nu(N_{k})) \to (0,0)$ as $k \to \infty$; $ \delta^{(N_{k})} |y_{k}| \leq R$ and $| N_{k}^{-1} n_{k} - t_{0}| \leq r$ for each $k$; and
	\begin{align} \label{E: useful convergence}
		G^{(N_{k})}_{n_{k}}(y_{k}) - \psi(\delta^{(N_{k})} y_{k}, N_{k}^{-1}n_{k}) = \nu(N_{k}) \quad \text{for each} \, \, k.
	\end{align}
Further, since the set $\{ (x,t) \in \mathbb{R} \times [t_{0} - r, t_{0} + r] \, \mid \, G^{\star}(x,t) = \psi(x,t)\}$ is contained in $(-\infty,x_{0}] \times \{t_{0}\}$, there is no loss of generality in assuming that there is an $x'_{0} \leq x_{0}$ such that $( \delta^{(N_{k})} y_{k}, N_{k}^{-1} n_{k}) \to (x'_{0},t_{0})$ and $G^{\star}(x'_{0},t_{0}) = \psi(x'_{0},t_{0})$.  Note, in particular, that the inequality $x'_{0} \leq x_{0}$ implies, by the constancy of $\psi_{1}$ in $(-\infty,x_{0}]$,
	\begin{align} \label{E: vanishing derivatives in limit}
		\lim_{k \to \infty} (\psi_{1}( \delta^{(N_{k})} y_{k} ), \partial_{x} \psi_{1}( \delta^{(N_{k})} y_{k}), \partial_{x}^{2}\psi_{1}( \delta^{(N_{k})} y_{k})) = (\psi_{1}(x_{0}),0,0).
	\end{align}
	
Finally, for notational ease, let $\Psi_{k} : \mathbb{R} \times [t_{0}-r,t_{0}+r] \to \mathbb{R}$ be the function $\Psi_{k}(y,t) = \psi(\delta^{(N_{k})}y,t) + \nu(N_{k})$.  Since $\nu(N_{k}) \to 0$, the bounds \eqref{E: room} imply that $0 \leq \Psi_{k} \leq 1$ pointwise for all $k$ large enough.  That is, $\Psi_{k} \in CDF(\overline{\mathbb{R}})$ for large enough $k$.  By the choice of $\nu(N_{k})$, 
	\begin{align*}
		G^{(N_{k})}_{n} \leq \Psi_{k}(\cdot,N_{k}^{-1}n)  \quad \text{pointwise for each integer} \, \, n \in  [N(t_{0} - r), N(t_{0} + r)].
	\end{align*}  
Therefore, by monotonicity (assumption (i)),
	\begin{align*}
		T^{(N_{k})}G^{(N_{k})}_{n_{k} - 1} \leq T^{(N_{k})} \Psi_{k}(\cdot,N_{k}^{-1}(n_{k}-1)).
	\end{align*}
By \eqref{E: subsolution final}, \eqref{E: useful convergence}, and the definition of $\Psi_{k}$, this implies
	\begin{align*}
		\Psi_{k}( y_{k}, N_{k}^{-1} n_{k}) = G^{(N_{k})}_{n_{k}}(y_{k}) \leq T^{(N_{k})} G^{(N_{k})}_{n_{k} - 1}(y_{k}) \leq T^{(N_{k})} \Psi_{k}(y_{k},N_{k}^{-1} n_{k} - N_{k}^{-1}).
	\end{align*}
After rearranging terms, this becomes
		\begin{align*}
			&N_{k}\Big( \Psi_{k}(y_{k}, N_{k}^{-1} n_{k}) - \Psi_{k}(y_{k}, N_{k}^{-1} n_{k} - N_{k}^{-1} ) \Big) \\
			&\quad \quad \quad \leq N_{k} ( \mathscr{L}^{(N_{k})} \Psi_{k} )(y_{k},N_{k}^{-1} (n_{k} - 1)) - N_{k} Q^{(N_{k})} (\Psi_{k}(y_{k},N_{k}^{-1} (n_{k} - 1)).
		\end{align*}
	By assumption (iii), the definition of $\Psi_{k}$, and the form of $\psi$, 
		\begin{align*}
			(\mathscr{L}^{(N_{k})} \Psi_{k})(y_{k}, N_{k}^{-1} ( n_{k} - 1 ) ) = (\mathscr{L}^{(N_{k})} (\psi_{1})_{\delta^{(N_{k})}} ) (y_{k}),
		\end{align*}
	hence, by assumptions (v) and (vi) and \eqref{E: vanishing derivatives in limit}, sending $k \to \infty$ above yields
		\begin{align*}
			\partial_{t} \psi(x_{0},t_{0}) - \mathcal{F} ( 0, 0 ) + Q ( \psi ( x_{0}, t_{0}) ) ) &=  \partial_{t}\psi_{2}(t_{0}) - \lim_{k \to \infty} \mathcal{F} ( \partial_{x} \psi_{1}( \delta^{(N_{k})} y_{k} ), \partial_{x}^{2} \psi_{1}( \delta^{(N_{k})} y_{k} ) ) \\
			&\quad + \lim_{k \to \infty} N_{k} Q^{(N_{k})}( \psi ( \delta^{(N_{k})} y_{k}, N_{k}^{-1} n_{k}) ) \\
			&\leq 0.
		\end{align*}
	Since $\partial_{x} \varphi(x_{0},t_{0}) = \partial_{x}^{2} \varphi(x_{0},t_{0}) = 0$ by assumption and $\partial^{m}_{t}\psi(x_{0},t_{0}) = \partial_{t}^{m} \varphi(x_{0},t_{0})$ for $m \in \{0,1\}$, this yields \eqref{E: viscosity subsolution inequality test function} as desired. 
	
\textit{Step 4: $F_{\star}$ is a Supersolution.} The proof that $F_{\star}$ is a supersolution of \eqref{E: initial value problem appendix} can be reduced to the subsolution property of $F^{\star}$ after a suitable transformation.  As in Remark \ref{R: left continuous} above, let $G_{-}$ denote the left-continuous version of a nondecreasing function $G$, and define an involution $G \mapsto G_{\text{rev}}$ on $CDF(\overline{\mathbb{R}})$ via the formula
	\begin{align*}
		G_{\text{rev}}(x) = 1 - G_{-}(-x).
	\end{align*}
Let $\{ \tilde{T}^{(N)} \}$ be the operators on $CDF(\overline{\mathbb{R}})$ obtained from $\{T^{(N)}\}$ by
	\begin{align*}
		\tilde{T}^{(N)} F = ( T^{(N)} (F_{\text{rev}}) )_{\text{rev}}.
	\end{align*}
It is straightforward to check that the family $\{\tilde{T}^{(N)}\}$ satisfies assumptions (i)-(vi), the only difference being $\mathcal{F}$ has to be replaced by the function $\tilde{\mathcal{F}}(v,w) = -\mathcal{F}(v,-w)$ and $Q$ by $\tilde{Q}(q) = -Q( 1 - q)$.

Define $\{\tilde{F}^{(N)}_{n}\}$ by the rule $\tilde{F}^{(N)}_{n} = (F^{(N)}_{n})_{\text{rev}}$.  Notice that the recursion $F^{(N)}_{n} = T^{(N)} F^{(N)}_{n-1}$ implies that $\tilde{F}^{(N)}_{n} = \tilde{T}^{(N)} \tilde{F}^{(N)}_{n-1}$.  Therefore, by Step 3, if $\tilde{F}^{\star}$ is defined by
	\begin{align*}
		\tilde{F}^{\star}(x,t) = \lim_{\delta \downarrow 0} \sup \left\{ \tilde{F}^{(N)}_{n}(y) \, \mid \, |\delta^{(N)} y - x| + |N^{-1} n - t| + N^{-1} \leq \delta \right\},
	\end{align*}
then $\tilde{F}^{\star}$ is a subsolution of the initial value problem \eqref{E: initial value problem appendix} with $\mathcal{F}$ replaced by $\tilde{\mathcal{F}}$, $Q$ by $\tilde{Q}$, and $F_{\text{in}}$ by $(F_{\text{in}})_{\text{rev}}$.  Further, by definition, $\tilde{F}^{\star}(x,t) = 1 - F_{\star}(-x,t)$.  From these last two observations, one readily concludes that $F_{\star}$ is a viscosity supersolution of \eqref{E: initial value problem appendix}.\qed

\subsection{Proof of Corollary \ref{C: arbitrary convergence}} In the proof that follows, it it convenient to work with the half-relaxed limits $\liminf_{*} F_{N}$ and $\limsup^{*} F_{N}$ of the rescaled CDF $F_{N}$ defined by
	\begin{align*}
		(\liminf\nolimits_{*} F_{N})(x,t) &= \lim_{\delta \downarrow 0} \inf \left\{ F_{N}(x',s) \, \mid \, |x' - x| + |s - t| + N^{-1} \leq \delta \right\}, \\
		(\limsup\nolimits^{*} F_{N})(x,t) &= \lim_{\delta \downarrow 0} \sup \left\{ F_{\epsilon}(x',s) \, \mid \, |x' - x| + |s - t| + N^{-1} \leq \delta \right\}.
	\end{align*}
	
\begin{proof}[Proof of Corollary \ref{C: arbitrary convergence}] Fix $F_{\text{in}} \in CDF(\overline{\mathbb{R}})$.  Suppose that, at time $t = 0$, $F_{N}(\cdot,0) \to F_{\text{in}}$ vaguely as $N \to +\infty$.  Let $\underline{F}$ and $\overline{F}$ be the maximal subsolution and the minimal supersolution of the problem \eqref{E: initial value problem appendix}.  In view of Proposition \ref{P: uniqueness criterion} and well-known properties of half-relaxed limits (see \cite[Remark 6.4]{user} or \cite[Lemma 6.2]{barles}), to see that $F_{N}$ converges locally uniformly in $\mathbb{R} \times (0,+\infty)$ to the discontinuous viscosity solution as $N \to +\infty$, it suffices to establish that
			\begin{align*}
				\underline{F} \leq \liminf\nolimits_{*} F_{N} \leq \limsup\nolimits^{*} F_{N} \leq \overline{F} \quad \text{in} \, \, \mathbb{R} \times [0,+\infty).
			\end{align*}
		The details will be provided only for the lower bound as the upper bound follows similarly.
		
		For each $M \in \mathbb{N}$, generate a rescaled CDF $F_{N,M}$ like $F_{N}$, albeit instead starting from the initial datum
			\begin{align*}
				F_{N,M}(x,0) = 2^{M} \int_{0}^{\infty} F_{N}(x - y,0) \rho(2^{M}y) \, dy,
			\end{align*} 
		where $\rho : (0,+\infty) \to (0,+\infty)$ is a smooth function supported in $[1,2]$ with $\int_{0}^{\infty} \rho(y) \, dy = 1$.  From the fact that $\rho$ is supported in $[1,2]$ and $F_{N}$ is nondecreasing in $x$, it follows that $F_{N,M}(x,0) \leq F_{N,M+1}(x,0) \leq F_{N}(x,0)$ for every $x \in \mathbb{R}$.  Due to the monotonicity assumption (i), this implies that $F_{N,M} \leq F_{N,M+1} \leq F_{N}$ in $\mathbb{R} \times [0,\infty)$ for each $N$ and $M$.  By Theorem \ref{T: monotone scheme}, $F_{N,M} \to F_{M}$ locally uniformly in $\mathbb{R} \times [0,+\infty)$ as $N \to +\infty$, where $F_{M}$ is the solution of \eqref{E: initial value problem appendix} with initial datum $F_{\text{in},M}$ given by $F_{\text{in},M}(x) = 2^{M} \int_{0}^{\infty} F_{\text{in}}(x - y) \rho(2^{M}y) \, dy$.  Further, by construction, the bound $F_{M} \leq F_{M+1} \leq \liminf\nolimits_{*} F_{N}$ holds pointwise in $\mathbb{R} \times [0,+\infty)$ for any fixed $M$.  
		
		Standard arguments (see \cite[Theorem 6.2]{barles}) show that the function $F_{\infty}$ defined by
			\begin{align*}
				F_{\infty}(x,t) = \lim_{\delta \downarrow 0} \inf \left\{ F_{M}(x',s) \, \mid \, |x' - x| + |t - s| + M^{-1} \leq \delta \right\}
			\end{align*}
		satisfies $\partial_{t} F_{\infty} - \mathcal{F}( \partial_{x} F_{\infty}, \partial_{x}^{2} F_{\infty} ) + Q(F_{\infty}) \geq 0$ in the viscosity sense in $\mathbb{R} \times (0,+\infty)$ and $F_{\infty}(\cdot,0) \geq F_{M}(\cdot,0)$ for any $M$.  In the limit $M \to +\infty$, this last bound becomes
			\begin{align*}
				F_{\infty}(x,0) \geq \lim_{ \delta \downarrow 0 } F_{\text{in}}(x - \delta).
			\end{align*} 
		Therefore, by the comparison principle (Theorem \ref{T: comparison}), $F_{\infty}$ is at least as large as any viscosity subsolution of the initial-value problem \eqref{E: initial value problem appendix}.  Therefore, $F_{\infty} \geq \underline{F}$ in $\mathbb{R} \times [0,+\infty)$, and this implies $\liminf\nolimits_{*} F_{N} \geq F_{\infty} \geq \underline{F}$ as desired.   \end{proof}

\subsection{Lipschitz Estimate} \label{A: contractivity} This subsection establishes that operators such as the family $\{ T^{(N)} \}$ are uniformly Lipschitz on $CDF(\overline{\mathbb{R}})$ with the supremum norm.

	\begin{prop} \label{P: contractive} Suppose that $T$ is an operator on $CDF(\overline{\mathbb{R}})$ satisfying the following two properties:
		\begin{itemize}
			\item[(i)] \emph{Monotonicity:} If $F \leq G$ pointwise in $\mathbb{R}$, then $TF\leq TG$ also holds.
		\item[(ii)] \emph{Discrete-Time Reaction-Diffusion Equation:} There is a function $\mathcal{L}$ defined on $CDF(\overline{\mathbb{R}})$ and a function $Q$ defined on $[0,1]$ such that
			\begin{align*}
				TF - F = \mathcal{L}F - Q(F) \quad \text{for each} \, \, F \in CDF(\overline{\mathbb{R}}).
			\end{align*}
		\item[(iii)] \emph{Commutation with Constants:} Given $F,G \in CDF(\overline{\mathbb{R}})$, if the difference $F - G$ is a constant function, then $\mathcal{L}F = \mathcal{L}G$.
%		\item[(iv)] \emph{Action at Infinity:} For any $F \in CDF(\overline{\mathbb{R}})$ and any $\bar{x} \in \{+\infty,-\infty\}$,
%			\begin{equation*}
%				(TF)(\bar{x}) = F(\bar{x}) - Q(F(\bar{x})).
%			\end{equation*}
		\end{itemize}
		If there is a constant $L > 0$ such that $|Q(q) - Q(q')| \leq L|q - q'|$ for each $q,q' \in [0,1]$, then
		\begin{align*}
			\| T^{n}F - T^{n}G \|_{\sup} \leq (1 + L )^{n} \| F - G \|_{\sup} \quad \text{for any} \, \, F,G \in CDF(\overline{\mathbb{R}}), \, \, n \in \mathbb{N}.
		\end{align*}\end{prop}
		
The proposition follows immediately from the following lemma, which was needed in the proof of Theorem \ref{T: monotone scheme} above:

	\begin{lemma} \label{L: pulling down} Suppose that $T$ is a monotone operator on $CDF(\overline{\mathbb{R}})$ satisfying the assumptions of Proposition \ref{P: contractive}.  If $\{F_{n}\}$ is any sequence with $F_{n} \leq TF_{n-1}$ for each $n$, then, for any $\delta > 0$, the sequence $\{ \tilde{F}_{n} \}$ given by $\tilde{F}_{n} = \max\{ F_{n} - \delta ( 1 + L )^{n}, 0 \}$ satisfies 				\begin{align}  \label{E: exponential comparison argument}
			\tilde{F}_{n} \leq T \tilde{F}_{n-1} \quad \text{for each} \, \, n.
		\end{align}
	
	In particular, if $\{F_{n}\}$ and $\{G_{n}\}$ are any two sequences in $CDF(\overline{\mathbb{R}})$ such that 
		
		\begin{align*}
			F_{n} \leq TF_{n - 1} \quad \text{and} \quad G_{n} \geq TG_{n - 1} \quad \text{ for each} \, \, n \in \mathbb{N} ,
		\end{align*}
	then
		\begin{align*}
			F_{n} \leq G_{n} + ( 1 + L )^{n} \sup \left\{ ( F_{0}(x) - G_{0}(x) )_{+} \, \mid \, x \in \mathbb{R} \right\}  \quad \text{for each} \, \, n.
		\end{align*}
	\end{lemma}
	
		\begin{proof} First, consider the sequence $\{ \tilde{F}_{n} \}$.  Let $\{c_{n}\}_{n \in \mathbb{N}}$ be the sequence of nonnegative numbers determined by the recursion
			\begin{align*}
				c_{n} = (1 + L ) c_{n - 1}, \quad c_{0} = \delta,
			\end{align*}
		so that $\tilde{F}_{n} = \max\{ F_{n} - c_{n}, 0 \}$.

		Fix $n \in \mathbb{N}$.  If $c_{n - 1} \geq 1$, then \eqref{E: exponential comparison argument} holds trivially as $\tilde{F}_{n} = \tilde{F}_{n-1} = 0$ in this case.  Thus, assume $c_{n-1} \in (0,1)$.  By monotonicity and the assumption on $\mathcal{L}$,
			\begin{align*}
				TF_{n-1} - c_{n} &\leq T ( \max\{ F_{n-1}, c_{n-1} \} ) - c_{n} \\
										&= \max\{ F_{n-1}, c_{n-1} \} + \mathcal{L} ( \max\{ F_{n-1} - c_{n-1}, 0 \} ) - Q ( \max\{ F_{n-1} -c_{n-1}, 0 \} ) \\
										&\qquad - Q ( \max\{ F_{n-1}, c_{n-1} \} ) + Q ( \max\{ F_{n-1} - c_{n-1}, 0 \} ) - c_{n} \\
										&\leq T( \max\{ F_{n-1} - c_{n-1}, 0 \} ) + c_{n-1} - c_{n} + Lc_{n-1}.
			\end{align*}
		In view of the formula for $\{c_{n}\}$ and the property of $\{F_{n}\}$, this implies
			\begin{align*}
				F_{n} - c_{n} \leq T ( \max\{ F_{n-1} - c_{n-1},0\} ) = T \tilde{F}_{n-1}.
			\end{align*}
		In view of the fact that $TG \geq 0$ for \emph{any} $G \in CDF(\overline{\mathbb{R}})$, this last bound implies \eqref{E: exponential comparison argument}.
		
		Finally, suppose that, in addition, $G_{n} \geq TG_{n-1}$ for any $n$, and let $\delta = \sup \{ ( F_{0}(x) - G_{0}(x) )_{+} \, \mid \, x \in \mathbb{R} \}$.  Notice that $\tilde{F}_{0} = \max\{ F_{0} - c_{0}, 0 \} \leq G_{0}$ by definition.  Thus, by the monotonicity of $T$,
			\begin{align*}
				\max\{ F_{n} - c_{n}, 0 \} =  \tilde{F}_{n} \leq G_{n} \quad \text{for each} \, \, n.
			\end{align*}
		In particular, $F_{n} \leq G_{n} + c_{n}$ for any $n$, as desired. \end{proof}

%%%%%%%%%%%%%%%%%%%%%%%%%%%%%%%%%%%%%%%%%%%%%%%%%%%%%%%%%%%%%%%%%%%%%%%%%%%%%%%%%%%%%%%%%%%%%%%%%%%%%%%%%%

%%%%% Effective Coefficients ( v3 )

%%%%%%%%%%%%%%%%%%%%%%%%%%%%%%%%%%%%%%%%%%%%%%%%%%%%%%%%%%%%%%%%%%%%%%%%%%%%%%%%%%%%%%%%%%%%%%%%%%%%%%%%%%

\section{Effective Coefficients in the Series-Parallel Graph Case} \label{A: polylog}

This appendix computes the constants $\sigma_{\mathcal{D}}$ and $a_{R}$ in the case of the logarithm of the distance and the resistance of the series-parallel graph.   Recall that, in this setting, the relevant forcing $f$ takes the form
	\begin{align*}
		f(u) = \log ( 1 + e^{-u} ).
	\end{align*}	
The corresponding function $g$ from Section \ref{S: main results} is given by
	\begin{align*}
		g(s) = \log ( e^{s} - 1 ) \quad \text{if} \, \, s \geq \log 2, \quad g(s) = 0, \quad \text{otherwise.}
	\end{align*}
Since $g(s) + f(g(s)) = s$ for $s \geq \log 2$, the formulae for $\sigma_{\mathcal{D}}$ and $a_{R}$ become
	\begin{align*}
		\sigma_{\mathcal{D}} &= - \int_{0}^{\log 2} s \, ds - \int_{\log 2}^{\infty} \log \left( \frac{1}{1 - e^{-s}} \right) \, ds, \\ 
		a_{R} &= 3 \int_{0}^{\log 2} s^{2} \, ds + \int_{\log 2}^{\infty} \log^{2} \left( \frac{1}{1 - e^{-s}} \right) \, ds + 2 \int_{\log 2}^{\infty} s \log \left( \frac{1}{1 - e^{-s}} \right) \, ds
	\end{align*}

The integrals above can be computed using polylogarithms, a family of functions related to the Riemann zeta function $\zeta$.  The dilogarithm and trilogarithm $Li_{2}$ and $Li_{3}$ are defined for $t \in [0,1]$ via the integrals 
	\begin{equation*}
		Li_{2}(t) = -\int_{0}^{t} \frac{1}{x} \log ( 1 - x ) \, dt, \quad Li_{3}(x) = \int_{0}^{t} \frac{1}{x} Li_{2}(x) \, dx.
	\end{equation*}
These functions have the power series representation $Li_{k}(t) = \sum_{n = 1}^{\infty} n^{-k} t^{n}$, hence, in particular, $Li_{k}(1) = \zeta(k)$.  The computations that follow seem to be related to Euler's formula for $Li_{2}( \frac{1}{2} )$ and Landen's formula for $Li_{3}( \frac{1}{2} )$ (see \cite[Section 1.1]{lewin}).

To compute the integrals, use the change-of-variables $x = e^{-s}$, $ds = \frac{1}{x} d x$.  In the case of $\sigma_{\mathcal{D}}$, this leads to
	\begin{align*}
		\sigma_{\mathcal{D}} &= \int_{ \frac{1}{2} }^{ 1 } \log ( x ) \frac{1}{ x } \, d x + \int_{ 0 }^{ \frac{1}{2} } \log ( 1 - x ) \frac{1}{x} \, d x \\
			&= - \frac{1}{2} \log^{2}(\frac{1}{2}) + \frac{1}{2} \int_{ \frac{1}{2} }^{ 1 } \log ( x ) \frac{ 1 }{ 1 - x } \, d x + \frac{1}{2} \int_{0}^{\frac{1}{2}} \log ( 1 - x ) \frac{1}{x} \, d x \\
			&= - \frac{1}{2} \log^{2}(\frac{1}{2}) + \frac{1}{2} \int_{ \frac{1}{2} }^{ 1 } \log ( 1 - x ) \frac{ 1 }{ x } \, d x + \frac{1}{2} \log^{2} ( \frac{1}{2} ) + \frac{1}{2} \int_{ 0 }^{ \frac{1}{2} } \log ( 1 - x ) \frac{1}{x} \, d x \\
			&= - \frac{1}{2} Li_{2}(1) = - \frac{1}{2} \zeta ( 2 ).
	\end{align*}  
In the case of $a_{R}$, this change-of-variables yields
	\begin{align*}
		a_{R} &= 3 \int_{ \frac{1}{2} }^{ 1 } \log^{2} ( x ) \frac{1}{x} \, d x + \int_{ 0 }^{ \frac{1}{2} } \log^{2} ( 1 - x ) \frac{1}{x} \, d x + 2 \int_{ 0 }^{ \frac{1}{2} } \log ( x ) \log ( 1 - x ) \frac{1}{x} \, d x
	\end{align*}
After integrating-by-parts and changing variables in the second integral, one has
	\begin{align*}
		\int_{ 0 }^{ \frac{1}{2} } \log^{2} ( 1 - x ) \frac{1}{ x } \, d x &= 2 \int_{ 0 }^{ \frac{1}{2} } \log ( x ) \log ( 1 - x ) \frac{1}{ 1 - x } \, d x + \log^{3} ( \frac{1}{2} ) \\
			&= 2 \int_{ \frac{1}{2} }^{ 1 } \log ( x ) \log ( 1 - x ) \frac{ 1 }{ x } \, d x + \log^{3} ( \frac{1}{2} ).
	\end{align*}
This leads to
	\begin{align*}
		a_{R} &= - \log^{3}(\frac{1}{2}) + 2 \int_{ \frac{1}{2} }^{ 1 } \log ( x ) \log ( 1 - x ) \frac{1}{ x } \, d x + \log^{3} ( \frac{1}{2} ) + 2 \int_{ 0 }^{ \frac{1}{2} } \log ( x ) \log ( 1 - x ) \frac{1}{ x } \, d x \\
			&= 2 \int_{ 0 }^{ 1 } \log ( x ) \log ( 1 - x ) \frac{ 1 }{ x } \, d x.
	\end{align*}
Finally,  integrating by parts and invoking the definitions of $Li_{2}$ and $Li_{3}$ yields
	\begin{align*}
		a_{R} &= 2 \int_{ 0 }^{ 1 } Li_{2}( x ) \frac{1}{x} \, d x - 2 Li_{2}( 1 ) \log ( 1 ) = 2 \int_{ 0 }^{ 1 } Li_{2} ( x ) \frac{ 1 }{ x } \, d x = 2 Li_{3}(1) = 2 \zeta(3).
	\end{align*}

\bibliographystyle{plain}
\bibliography{series_parallel}
  
\end{document}